\newtheorem{theorem}{Theorem}[section]
\newtheorem{Def}[theorem]{Definition}
\newtheorem{lemma}[theorem]{Lemma}
\newtheorem{proposition}[theorem]{Proposition}
\newtheorem{cor}[theorem]{Corollary}
\theoremstyle{definition}
\newtheorem{rem}[theorem]{Remark}
\newtheorem{bsp}[theorem]{Example}
\newcommand{\CB}{\mathcal{B}}
\newcommand{\CE}{\mathcal{E}}
\newcommand{\BR}{\mathbb{R}}
\newcommand{\SH}{\mathscr{H}}
\newcommand{\1}{\mathbbm{1}} 
\DeclareMathOperator*{\supp}{supp}
\newcommand{\norm}[1]{\left\lVert#1\right\rVert}
\newcommand{\normtwo}[1]{{\left\vert\kern-0.25ex\left\vert\kern-0.25ex\left\vert #1 
		\right\vert\kern-0.25ex\right\vert\kern-0.25ex\right\vert}}
\newcommand{\di}{\,\text{d}}
\newcommand{\dishort}{\text{d}}
\newcommand{\sk}[2]{\langle #1, #2 \rangle}
\newcommand{\floor}[1]{\lfloor #1 \rfloor}
\newcommand{\abs}[1]{\ensuremath{\left\vert#1\right\vert}}
\newcommand{\lap}{\Delta}
\newcommand{\ssubset}{\subset \joinrel \subset} 
\newcommand{\ssupset}{\supset \joinrel \supset} 
\DeclareMathOperator{\distw}{dist}
\newcommand{\dist}[2]{\distw(#1, #2)} 
\newcommand*{\diam}[1]{\text{diam}(#1)} 
\newcommand{\ie}{i.e.\,}
\newcommand{\aev}{a.e.\,}
\newcommand{\eg}{e.g.\,}
\newcommand{\icol}[1]{
	\left(\begin{smallmatrix}#1\end{smallmatrix}\right)%
}
\begin{document}
	\title[Maximum principle for stable operators]{Maximum principle for stable operators}
	\author{Florian Grube, Thorben Hensiek}
	\address{Fakultät für Mathematik, Universität Bielefeld, Postfach 10 01 31, 33501 Bielefeld, Germany}
	\email{fgrube@math.uni-bielefeld.de, thensiek@math.uni-bielefeld.de}
	\makeatletter
	\@namedef{subjclassname@2020}{%
		\textup{2020} Mathematics Subject Classification}
	\makeatother
	
	\subjclass[2020]{35B50, 35D30, 47G20, 60G52}
	
	\keywords{Maximum principle, nonlocal symmetric stable operator}
	
	\begin{abstract} We prove a weak maximum principle for nonlocal symmetric stable operators. This includes the fractional Laplacian. The main focus of this work is the regularity of the considered function. 
	\end{abstract}
	
	\maketitle
\section{Introduction}
	The study of maximum principles for harmonic functions can be traced back to the works \cite{gauss_mp} by Gauß and \cite{riemann1851grundlagen} by Riemann. They are a key tool in the theory of existence, particularly uniqueness, and regularity of solutions to linear second order elliptic equations. Let $\Omega$ be a sufficiently regular, bounded domain. For the Laplacian the following maximum principle is well known. If $u \in L^1_{\text{loc}}(\Omega)$ satisfies
	\begin{align}
		u \text{ is upper-semicontinuous in }\Omega \text{ and } u(x)&\le \tfrac{1}{\abs{\partial B(x)}}\int_{\partial B(x)} u(y) \di y \text{ for all balls }B(x)\ssubset \Omega,\label{eq:sub_mean} \\
		\limsup_{x\to z} u(x)&\le 0 \text{ for all } z\in \partial \Omega \label{eq:semicontinuous on boundary},
	\end{align}
	then $u \le 0$ in $\Omega$. The condition \eqref{eq:sub_mean} is equivalent to 
	\begin{align}
		(u, -\lap \eta)_{L^2(\Omega)}&\le 0 \text{ for all } \eta \in C_c^\infty(\Omega), \eta\ge 0,\label{eq:weaksubharmonicity}
	\end{align}
	\ie $u$ is distributional subharmonic. For both statements we refer to the book \cite[Chapter 27]{Donogue_book} by Donoghue. An alternative to the condition \eqref{eq:semicontinuous on boundary} is to assume 
	\begin{equation}\label{eq:boundaryregularity}
		\lim\limits_{\varepsilon\to 0+} \varepsilon^{-1}\mspace{-50mu}\int\limits_{\{ x\in \Omega\,|\, \dist{x}{\partial \Omega}<\varepsilon \}} \mspace{-55mu} u^+(x)\di x=0,
	\end{equation}
	see \cref{th:main_local}. Here $u^+= \max \{ u,0 \}$. The condition \eqref{eq:boundaryregularity} yields $\int_{\partial \Omega} u^+ \dishort \sigma =0$ for continuous functions $u\in C(\overline{\Omega})$, \ie $u\le 0$ on $\partial \Omega$. The goal of this paper is to prove a nonlocal version of this maximum principle. 
	
	In recent years there has been an intense study of nonlocal operators. The most prominent and well studied example is the fractional Laplacian. It is defined for $s\in(0,1)$ as
	\begin{align*}\label{eq:fractional_laplacian}
		(-\lap)^s u(x):= \text{p.v.}\int\limits_{\BR^d} \frac{u(x)-u(y)}{\abs{x-y}^{d+2s}} \di y.
	\end{align*}
By the symmetry of the fractional Laplacian, we can define the operator for functions in $L^1(\BR^d, \, (1+\abs{x})^{-d-2s} \di x )$ via
\begin{equation*}
	\langle (-\lap)^su, \eta\rangle:= (u,(-\lap)^s\eta)_{L^2(\BR^d)} \text{ for $\eta\in C_c^\infty(\BR^d)$. }
\end{equation*}	The space $L^1(\BR^d, \, (1+\abs{x})^{-d-2s} \di x )$ captures the decay of the Lévy measure $\nu(\dishort x)= \abs{x}^{-d-2\,s}\di x$ at infinity. It is typically called the tail space, \eg see the book \cite{bogdan_potential_analysis} by Bogdan et al. and Bogdan and Byczkowski \cite[Definition 3.1]{bogdan_tail}. For the fractional Laplacian our result, see \cref{Theorem 1.1}, reads as follows. If $u \in L^1(\BR^d, (1+\abs{x})^{-d-2s}\di x)$ satisfies 
	\begin{align}	
		\left(u,(-\Delta)^s \eta\right)_{L^2(\BR^d)}&\le 0 \text{ for all } \eta \in C_c^\infty(\Omega), \, \eta \ge 0,  \nonumber\\
		u&\le 0   \text{ \aev on } \Omega^c,\nonumber\\
		\lim\limits_{\varepsilon\to 0+} \varepsilon^{-s}\mspace{-50mu}\int\limits_{\{ x\in \Omega\,|\, \dist{x}{\Omega^c}<\varepsilon \} } \mspace{-55mu} &u^+(x)\di x=0, \label{eq:frac_lap_condition}
	\end{align}
	then $u\le 0$ \aev in $\Omega$. Instead of \eqref{eq:frac_lap_condition} we could assume the stronger but more accessible assumption $u^+\in L^{1} (\Omega, \dist{x}{\partial \Omega}^{-s} \di x)$. 
	
	Analogous to the case of the Laplacian, see \eqref{eq:sub_mean}, \eqref{eq:semicontinuous on boundary} and \eqref{eq:weaksubharmonicity}, Silvestre proved in \cite[Proposition 2.17]{Silvestre2007} the following weak maximum principle. Let $u\in L^1(\BR^d, \, (1+\abs{x})^{-d-2s} \di x )$ be upper-semicontinuous on $\overline{\Omega}$. If $u$ satisfies 
	\begin{align}
		(u,(-\lap)^s\eta)_{L^2(\BR^d)} &\le 0 \text{ for all nonnegative } \eta\in C_c^\infty(\Omega),\label{eq:subharmonic_fraclap_distri}\\
		u&\le 0 \text{ in } \Omega^c,\nonumber
	\end{align}
	then $u\le 0$ in $\Omega$. We want to emphasize that the function $u$ needs to be to upper-semicontinuous up to the boundary of $\Omega$. In \cite[Theorem 5.2]{cafferelli_silvestre_comparison_principle} Caffarelli and Silvestre extended this result to a larger class of operators. The condition \eqref{eq:frac_lap_condition} is less restrictive than being upper-semicontinuous on $\overline{\Omega}$ since upper-semicontinuous functions attain their maximum on compact sets.	Cabr{\'e} and Sire proved a strong maximum principle for the fractional Laplacian, using a representation as a Dirichlet-to-Neumann map, see \cite[Section 4.6]{Cabre_Sire}. In \cite{schrödinger_operator} Bogdan and Byczkowski used probabilistic methods to prove a strong maximum principle for supersolutions related to the Schrödinger operator $(-\Delta)^s + q$. Weak and strong maximum principles for a larger class of operators of the form $L +q$ can be found in the work \cite{Jarohs2019} by Jarohs and Weth. Results for solutions to nonlinear equations and antisymmetric solutions of related linear equations can be found in \cite{Jarohs2016}. For the case of the fractional Laplacian we refer the reader also to the work of Chen, Li, Li \cite{Chen2017} and Lü \cite{Lue2019}. Abatangelo proved a weak maximum principle for the fractional Laplacian in \cite[Lemma 3.9]{abatangelo_large_sol}. Remarkably, the only assumption on the function is $u\in L^1(\Omega)$. This is a consequence of allowing for test functions $\eta\in C(\BR^d)$ in \eqref{eq:subharmonic_fraclap_distri} such that $(-\lap)^s \eta = \psi$ in $\Omega$ and $\eta=0$ on $\Omega^c$. Here $\psi\in C_c^\infty(\Omega)$ is an arbitrary nonnegative function. Maximum principles and the failure thereof for higher order fractional Laplacians were discussed by Abatangelo, Jarohs and Saldaña in \cite{abatangelo_higher_order}. 
	
	A nonlocal Green-Gauß formula motivates the following bilinear form associated to the fractional Laplacian. For $u\in C_b^2(\BR^d)$ and $v\in C_c^1(\Omega)$  
	\begin{equation*}
		\int\limits_{\Omega} (-\lap)^s u(x) v(x) \di x = \tfrac{1}{2}\iint\limits_{(\Omega^c\times \Omega^c)^c}\frac{(u(x)-u(y))(v(x)-v(y))}{\abs{x-y}^{d+2\,s}}\di y \di x =:  \CE_{(-\lap)^s}(u,v). 
	\end{equation*}
	In this setup the following weak maximum principle holds. Let $u:\BR^d\to \BR$ satisfy $\CE_{(-\lap)^s}(u,u)<\infty$. If $u\le 0$ \aev on $\Omega^c$ and $u$ is a weak subsolution in $\Omega$, i.e. $\CE_{(-\lap)^s}(u,v)\le 0$ for all $v:\BR^d\to [0,\infty)$ with $\CE_{(-\lap)^s}(v,v)<\infty$ and $v=0$ \aev on $\Omega^c$, then $u\le 0$ \aev. This statement is a direct consequence of choosing $v=u^+ = \max\{ u,0 \}$, see Servadei and Valdinoci \cite[Lemma 6]{valdinoci_energy_frac_lap}. The idea is often used in the proof of maximum principles for second order elliptic operators, see Gilbarg and Trudinger \cite{gil_trud}. This approach has been applied to a larger class of nonlocal operators, including those with nonsymmetric kernels, by Felsinger, Kaßmann and Voigt in \cite[Theorem 4.1]{Felsinger2013}.
	
	In this article we consider generators of symmetric, stable Lévy processes. These processes play a key role in the Generalized Central Limit Theorem, \eg see the book \cite{zentral_limit} by Samorodnitsky and Taqqu. Let $s\in(0,1)$, $(X_t)$ a symmetric, $2s$-stable process, \ie  for all $t>0$ 
\begin{equation*}
	X_1\overset{d}{=} t^{-\tfrac{1}{2s}}X_t.
\end{equation*}
There exists  a nonnegative, finite measure $\mu$ on the unit sphere $S^{d-1}$ such that its generator is $-A_s$, where
\begin{align}
	A_s u(x):= \text{p.v.} \int\limits_{\BR^{d}} (u(x)-u(x+h)) \,\nu(\dishort h) \label{eq:definition_A_s}
\end{align}
with the Lévy measure given in polar coordinates via
\begin{align}
	 \nu(U):= \int\limits_{\BR}\int\limits_{S^{d-1}} \1_U(r\theta) \abs{r}^{-1-2s}  \mu(\dishort\theta) \di r, \quad U\in \CB(\BR^d).\label{eq:levy_measure}
\end{align}
This relation was established by Lévy in \cite{levy} and Khintchine in \cite{khintchine}. See also Sato \cite{sato} for a proof.
Additionally, we assume the nondegeneracy condition
	\begin{align}\label{eq:ellip_conditions}
		0<C\le  \inf\limits_{\omega\in S^{d-1}} \int\limits_{S^{d-1}}\abs{\omega\cdot \theta}^{2s} \mu(\dishort\theta).
	\end{align}
The condition \eqref{eq:ellip_conditions} is satisfied as soon as $\mu$ is not supported by a hyperplane. It is rooted in the work \cite{picard} of Picard as an ellipticity assumption on the Lévy measure $\nu$. Another motivation to study operators like \eqref{eq:definition_A_s} is Courr{\'e}ge's theorem, which characterizes the operators satisfying a maximum principle, see the work \cite{courrege} by Courr{\'e}ge. We emphasize two examples in this class of operators. If we pick $\mu$ as a uniform distribution on the sphere, then the resulting operator is the aforementioned fractional Laplacian $(-\lap)^s$. Another example is $\sum_{j=1}^{d} (-\partial_{j}^2)^s$, which is a sum of one dimensional fractional Laplacians in all coordinate directions. It is the generator of the process $(X_t^{1}, \dots, X_t^{d})$ where $X_t^{i}$ are independent, $1$-dimensional, symmetric, $2s$-stable processes. In this case $\mu$ is a sum of Dirac measures $\delta_{e_{i}}$, where $e_i$ are basis vectors of $\BR^d$. 

The aim of this article is to find the minimal regularity of a function $u:\BR^d \to \BR$ such that the following maximum principle holds. 
\begin{equation}\label{eq:max_principle}
	A_s u\le 0 \text{ in }\Omega, \, u\le 0 \text{ on }\Omega^c \Rightarrow u\le 0 \text{ in }\Omega.
\end{equation}
We study the operator distributionally. Thereby, 
\begin{equation}\label{eq:distributional_A_s}
	\sk{A_su}{\eta}:= (u, A_s \eta)_{L^2(\BR^d)}
\end{equation}
should exist for all $\eta \in C_c^\infty(\Omega)$. We introduce the weighted $L^1$-space $L^1(\BR^d,\, \nu^\star(x) \di x)$ with the weight
\begin{equation}\label{eq:nustar}
	\nu^\star(x):= \int\limits_{\BR}\int\limits_{S^{d-1}}  \1_{\Omega} (x+r\theta) (1+\abs{r})^{-1-2s}   \mu(\dishort\theta)\di r.
\end{equation}
We call it the tail space for $\nu$ and $\Omega$. $A_s \eta$ is bounded in $\Omega$ and decays at infinity like $\nu^\star$ for $\eta\in C_c^\infty(\Omega)$. Therefore, $u\in L^1(\Omega)\cap L^1(\BR^d,\, \nu^\star(x) \text{d} x)$ is sufficient for \eqref{eq:distributional_A_s} to exist, see \cref{prop:innerprod_exists}. In the case of the fractional Laplacian this tail space coincides with the aforementioned space $L^1(\BR^d, \, (1+\abs{x})^{-d-2s})$. This is proven in  \cref{appendix:tailspace}. In our second example $\sum_{j=1}^d (-\partial_{j}^2)^s$ the auxiliary measure $\nu^\star(x)\di x$ only measures sets close to the coordinate axes, dependent on $\Omega$. Here functions in $L^1(\BR^d,\, \nu^\star(x) \di x)$ are not necessarily integrable on $\Omega$. An example can be found in \cref{appendix:counterexample}. The weight $\nu^\star$ captures the behavior of $\nu$ at infinity. Foghem and Kaßmann introduced and discussed several possibilities of tail weights for a large class of Lévy measures in \cite{kassmann_foghem_neumann}.

The integrability $u\in L^1(\Omega)\cap L^1(\BR^d,\, \nu^\star(x) \di x)$ is not sufficient for \eqref{eq:max_principle} to hold. The function 
\begin{equation}\label{eq:counter_example}
	u(x):= \big(1-\abs{x}^2\big)^{-1+s} \, \text{ for } x\in B_1(0),\qquad u(x):= 0 \,\text{ for } x\notin B_1(0)
\end{equation}
satisfies $(-\lap)^s u =0$ in $B_1(0)$ and $u=0$ on $B_1(0)^c$, but it disobeys the maximum principle, see the works \cite{counterexample_hmissi} by Hmissi, \cite{counterexample_bogdan} by Bogdan and \cite{Dyda2021} by Dyda for a proof.
Now we state the main result of this article.
\begin{theorem}\label{Theorem 1.1}
	Let $\Omega\subset \BR^d$ be a bounded Lipschitz domain satisfying uniform exterior ball condition, $s\in(0,1)$ and $\mu$ a nonnegative, finite measure on the unit sphere satisfying the nondegeneracy assumption \eqref{eq:ellip_conditions}, $A_s$ be as in \eqref{eq:definition_A_s}. If $u \in L^1(\Omega) \cap L^1(\BR^d, \nu^\star(x)\di x)$ satisfies
	\begin{align}
		\left(u,A_s \eta\right)_{L^2(\BR^d)}&\le 0 \text{ for all } \eta \in C_c^\infty(\Omega), \, \eta \ge 0, \label{eq:ultrasubharmonic} \\
		u&\le 0   \text{ \aev on } \Omega^c, \label{eq:nonnegativeoutside}\\
		\lim\limits_{\varepsilon\to 0+} \varepsilon^{-s}\mspace{-50mu}\int\limits_{\{x\in \Omega\,|\, \dist{x}{\Omega^c}<\varepsilon\}}\mspace{-55mu}&u^+(x)\di x=0,\label{eq:boundary_regularity_assumption}
	\end{align}
	 then $u\le 0$ \aev in $\Omega$.
\end{theorem}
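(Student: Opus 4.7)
\medskip
\noindent\textbf{Proof outline.} The argument follows the duality strategy pioneered by Abatangelo for the fractional Laplacian, adapted to the stable operator $A_s$. It suffices to prove that $\int_\Omega u\,\psi\,\di x\le 0$ for every nonnegative $\psi\in C_c^\infty(\Omega)$, since this yields $u\le 0$ \aev in $\Omega$. Fix such a $\psi$. Using existence and boundary regularity theory for the Dirichlet problem associated with $A_s$ on bounded Lipschitz domains satisfying the exterior ball condition (available under \eqref{eq:ellip_conditions}), one obtains $\eta\in C(\BR^d)\cap C^{2s+\alpha}_{\mathrm{loc}}(\Omega)$ with $A_s\eta=\psi$ pointwise in $\Omega$, $\eta=0$ on $\Omega^c$, $A_s\eta\in L^\infty(\Omega)$, and the sharp decay $0\le\eta(x)\le C\,\dist{x}{\partial\Omega}^s$. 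Nonnegativity of $\eta$ follows from the classical maximum principle for continuous functions (Silvestre). As a consequence, for every $x\in\Omega^c$ we have $A_s\eta(x)=-\int_{\BR^d}\eta(x+h)\,\nu(\di h)\le 0$ and $|A_s\eta(x)|\le C\nu^\star(x)$.

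The core task is to extend \eqref{eq:ultrasubharmonic} from smooth compactly supported test functions to this $\eta$. Set $\eta_n:=\rho_{\varepsilon_n}\ast(\eta\,\chi_n)\in C_c^\infty(\Omega)$, where $\chi_n$ is a smooth cut-off vanishing on $\{\dist{\cdot}{\partial\Omega}<1/n\}$ and $\varepsilon_n\downarrow 0$ is chosen sufficiently small. By \eqref{eq:ultrasubharmonic}, $\int_{\BR^d}u\,A_s\eta_n\,\di x\le 0$ for every $n$, and the plan is to show
\begin{equation*}
\int_{\BR^d}u\,A_s\eta_n\,\di x\longrightarrow \int_\Omega u\,\psi\,\di x+\int_{\Omega^c}u\,A_s\eta\,\di x\qquad(n\to\infty).
\end{equation*}

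The limit is handled by splitting $\BR^d=\Omega^c\cup(\Omega\setminus\Omega_\delta)\cup\Omega_\delta$ with $\Omega_\delta:=\{x\in\Omega:\dist{x}{\partial\Omega}<\delta\}$ and sending $n\to\infty$ before $\delta\to 0$. On $\Omega^c$, the uniform-in-$n$ tail bound $|A_s\eta_n|\le C\nu^\star$, inherited from $\eta_n\le C\dist{\cdot}{\partial\Omega}^s$, combined with $u\in L^1(\BR^d,\nu^\star\,\di x)$, permits dominated convergence. On $\Omega\setminus\Omega_\delta$, interior $C^{2s+\alpha}$ regularity of $\eta$ and the symmetric second-difference representation of $A_s$ give uniform convergence $A_s\eta_n\to A_s\eta=\psi$, and $u\in L^1(\Omega)$ ensures the piece converges to $\int_{\Omega\setminus\Omega_\delta}u\,\psi\,\di x$, which tends to $\int_\Omega u\,\psi\,\di x$ as $\delta\to 0$. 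The crux is the contribution from $\Omega_\delta$: using $\eta_n(y)\le C\dist{y}{\partial\Omega}^s$ and the explicit polar form of $\nu$, one proves the sharp uniform bound $|A_s\eta_n(x)|\le C\,(\dist{x}{\partial\Omega}+1/n)^{-s}$ on $\Omega_\delta$. The $u^+$-contribution is then controlled by \eqref{eq:boundary_regularity_assumption}: the identity $\dist{x}{\partial\Omega}^{-s}=s\int_0^\infty\mathbbm{1}_{\dist{x}{\partial\Omega}<t}\,t^{-s-1}\di t$ transfers the weighted integral into the quantity $t^{-s}\int_{\Omega_t}u^+\,\di x$, which by \eqref{eq:boundary_regularity_assumption} vanishes as $t\to 0$. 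The $u^-$-contribution is treated separately by exploiting that $A_s\eta_n\le 0$ on the innermost strip $\{\dist{\cdot}{\partial\Omega}<1/(2n)\}$ (where $\eta_n\equiv 0$), together with \eqref{eq:nonnegativeoutside} and the distributional subharmonicity \eqref{eq:ultrasubharmonic}, which rule out concentration of $u^-$ near $\partial\Omega$.

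Combining the pieces yields $\int_\Omega u\,\psi\,\di x+\int_{\Omega^c}u\,A_s\eta\,\di x\le 0$. Since $u\le 0$ and $A_s\eta\le 0$ on $\Omega^c$, the exterior integral is nonnegative, hence $\int_\Omega u\,\psi\,\di x\le 0$. As $\psi\ge 0$ in $C_c^\infty(\Omega)$ is arbitrary, we conclude $u\le 0$ \aev in $\Omega$. The principal obstacle is the sharp boundary-strip estimate $|A_s\eta_n|\lesssim \dist{\cdot}{\partial\Omega}^{-s}$ uniform in $n$ and, coupled to it, the treatment of the $u^-$-contribution in $\Omega_\delta$, since \eqref{eq:boundary_regularity_assumption} directly controls only $u^+$; the exterior ball condition on $\partial\Omega$ and the nondegeneracy \eqref{eq:ellip_conditions} of $\mu$ enter precisely here, ensuring the regularity of $\eta$ on which the whole scheme rests.
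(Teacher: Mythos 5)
Your overall architecture is recognisably close in spirit to the paper's (test against an approximation of the solution of $A_s\eta=\psi$ with zero exterior data, use the boundary regularity $\eta\lesssim \distw(x,\partial\Omega)^s$ to produce a weight of order $\distw(x,\partial\Omega)^{-s}$ on a boundary strip, and match this against \eqref{eq:boundary_regularity_assumption}), but as written it has two genuine gaps. The first is the treatment of $u^-$. You work with $u$ itself and must control $\int_{\Omega_\delta}u^-A_s\eta_n$; your sign argument only covers the innermost strip $\{\distw(\cdot,\partial\Omega)<1/(2n)\}$ where $\eta_n\equiv 0$, while in the transition region at scale $1/n$ the cut-off makes $A_s\eta_n$ \emph{positive} of size comparable to $n^{s}$, and nothing in the hypotheses bounds $\int u^-$ over a strip of width $\sim 1/n$ by $o(n^{-s})$: assumption \eqref{eq:boundary_regularity_assumption} concerns only $u^+$, and \eqref{eq:ultrasubharmonic}, \eqref{eq:nonnegativeoutside} do not "rule out concentration of $u^-$ near $\partial\Omega$" (a subsolution may well have $u^-$ blowing up at the boundary while remaining in $L^1(\Omega)$). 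The paper removes this obstacle before any boundary analysis by proving that $u^+$ is again a subsolution with $u^+=0$ on $\Omega^c$ (\cref{lem:uplus}, via mollification and a Li--Wu--Xu type decomposition); this reduction is a nontrivial lemma and is exactly the ingredient your scheme is missing.

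The second gap is the layer-cake step for the $u^+$ contribution. Condition \eqref{eq:boundary_regularity_assumption} does \emph{not} imply $u^+\in L^1(\Omega_\delta,\distw(x,\partial\Omega)^{-s}\di x)$: writing $g(t):=t^{-s}\int_{\{\distw(x,\partial\Omega)<t\}}u^+\di x$, the identity you invoke gives $\int_{\Omega_\delta}u^+\distw(x,\partial\Omega)^{-s}\di x = s\int_0^{\delta}t^{-1}g(t)\di t+\dots$, which diverges for instance when $g(t)\sim 1/\log(1/t)$, a case allowed by \eqref{eq:boundary_regularity_assumption} (the paper's \cref{rem:theorem} stresses that $u^+\in L^1(\Omega,\distw(x,\partial\Omega)^{-s}\di x)$ is a strictly stronger hypothesis). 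Since your bound $|A_s\eta_n|\lesssim(\distw(x,\partial\Omega)+1/n)^{-s}$ increases monotonically to $\distw(x,\partial\Omega)^{-s}$ as $n\to\infty$ with $\delta$ fixed, the iterated limit "$n\to\infty$ before $\delta\to 0$" cannot make the strip term vanish under \eqref{eq:boundary_regularity_assumption} alone. To use only \eqref{eq:boundary_regularity_assumption} you must couple the strip width to the regularisation scale, which is precisely how the paper proceeds: the mollification of $u^+$, the exhausting subdomains $D_\varepsilon$ of \cref{kor:approximation_radius} and the boundary strip all live at the same scale $\varepsilon$, and \cref{prop:calculation_distancefunction} shows that the convolved weight $\distw(\cdot,\partial D_\varepsilon)^{-s}\ast\eta_\varepsilon$ stays of order $\varepsilon^{-s}$ on a strip of width $O(\varepsilon)$, which is exactly the quantity \eqref{eq:boundary_regularity_assumption} controls. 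Your proposal could be repaired along these lines (diagonal coupling $\delta\sim 1/n$ plus the $u^+$ reduction), but as stated both steps fail.
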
 
\begin{rem}\label{rem:theorem}
	Instead of \eqref{eq:boundary_regularity_assumption}, we can assume the stronger but more accessible condition 
	\begin{equation*}
		u^+\in L^{1} (\Omega, \dist{x}{\Omega^c}^{-s} \di x) .
	\end{equation*}
	This condition implies \eqref{eq:boundary_regularity_assumption} because if $u^+\in L^{1} (\Omega, \dist{x}{ \Omega^c}^{-s} \di x)$, then
	\begin{align*}
		 \varepsilon^{-s}\mspace{-50mu}\int\limits_{\{x\in \Omega\,|\, \dist{x}{\Omega^c}<\varepsilon\}}\mspace{-50mu}u^+(x)\di x\le \int\limits_{\{x\in \Omega\,|\, \dist{x}{\Omega^c}<\varepsilon\}}\mspace{-50mu}u^+(x) \dist{x}{\Omega^c}^{-s}\di x\to 0 \text{ as } \varepsilon\to 0+.
	\end{align*}
\end{rem}
\begin{rem}
	\begin{enumerate}[(i)]
		\item{ By \eqref{eq:A_seta_tail_estimate}, if $u$ satisfies \eqref{eq:ultrasubharmonic}, then $u\1_{\supp(\nu^\star)}$ satisfies \eqref{eq:ultrasubharmonic}. Thus, we may replace \eqref{eq:nonnegativeoutside} with $u\le 0   \text{ \aev on } \Omega^c \cap \supp(\nu^\star)$ in \cref{Theorem 1.1}. This is particularly interesting if the Lévy measure $\nu$ does not have full support.}
		\item{\cref{Theorem 1.1} is optimal in the following sense. The function $u$ from \eqref{eq:counter_example} disobeys the maximum principle and	\begin{equation*}
			\varepsilon^{-s+\delta}\int\limits_{B_1\setminus B_{1-\varepsilon}} (1-\abs{x}^2)^{-1+s} \di x \to 0 \text{ as } \varepsilon\to 0+
		\end{equation*}
		for every $\delta>0$ but not for $\delta=0$.  }
		\item{ For the fractional Laplacian and the domain $\Omega=B_1(0)$, Lü proved a maximum principle in \cite[Theorem 6]{Lue2019}. The proof relies on the explicit representation of the Poisson kernel. \cite[Theorem 6]{Lue2019} requires $u\in L^{1/(1-s)}(B_1(0))\cap L^1(\BR^d, (1+\abs{x})^{-d-2s}  \dishort x)$. The assumptions on $u$ in \cref{Theorem 1.1} are weaker. If $u\in L^{1/(1-s)}(B_1(0))  $, then Hölder inequality yields \begin{equation*}
			\varepsilon^{-s}\mspace{-15mu}\int\limits_{B_1\setminus B_{1-\varepsilon}} \mspace{-20mu}u^+(x) \di x\le \varepsilon^{-s}\norm{u}_{L^{\tfrac{1}{1-s}}(B_1\setminus B_{1-\varepsilon})}\,\Big( \abs{B_1\setminus B_{1-\varepsilon}} \Big)^{s}\le c\, \norm{u}_{L^{\tfrac{1}{1-s}}(B_1\setminus B_{1-\varepsilon})}\to 0 \text{ as }\varepsilon\to 0.
		\end{equation*}}
		\item{ Very recently the articles \cite{Li_Li_maximum, Li_Li_maximum_2} by Li and Liu have been uploaded to arXiv. \cite[Theorem 1.1]{Li_Li_maximum} contains our \cref{Theorem 1.1} in the special case of the fractional Laplacian. They assume the same conditions on the function $u$. Note that the proof is rather different from ours. In contrast, our result holds for the larger class of stable, nondegenerate operators. }
	\end{enumerate}
\end{rem}
	Let us explain the main ideas in the proof of \cref{Theorem 1.1}. In an ideal situation we would use the Green function in place of $\eta$ in \eqref{eq:ultrasubharmonic}. This is not permitted in our setup. Thereby, we approximate the Green function. We consider a sequence $D_\varepsilon \ssubset \Omega$ of subdomains exhausting $\Omega$. We fix $\psi\in C_c^\infty(\Omega)$ and solve the Dirichlet problem $A_s \phi_\varepsilon = \psi$ in $D_\varepsilon$ and $\phi_\varepsilon = 0$ on $D_\varepsilon^c$. The solutions $\phi_\varepsilon$ approximate the Green function if we pick an approximate identity in place of $\psi$. Regularity results on the solution $\phi_\varepsilon$ are crucial in our proof.
	\begin{rem}
		Assumptions on the regularity of the boundary of the domain $\Omega$ in \cref{Theorem 1.1} seem unnatural. They are only needed for boundary regularity of solutions to the Dirichlet problem in \cref{prop:regularity}, see Ros-Oton and Serra \cite[Proposition 4.5]{RosOtonOuterRegularity}.
	\end{rem}
	Nonlocal operators and related Dirichlet problems are often studied in a Hilbert space setting. Following the works \cite{Felsinger2013} by Felsinger, Kaßmann, Voigt and \cite{valdinoci_energy_frac_lap} by Servadei and Valdinoci, we define a bilinear form associated to $A_s$ 
	\begin{equation*}
		\CE_{A_s}(u,v):= \tfrac{1}{2} \int\limits_{\BR^d} \int\limits_{\BR} \int\limits_{S^{d-1}}  \1_{(\Omega^c\times\Omega^c)^c}(x, x+r\,\theta) \frac{(u(x)-u(x+r\,\theta))(v(x)-v(x+r\,\theta))  }{\abs{r}^{1+2\,s}} \mu(\dishort\theta)  \di r\di x.
	\end{equation*}
	This is motivated by a nonlocal Green-Gauß formula, see Du et al. \cite{vector_calc} for bounded kernels, Dipierro, Ros-Oton and Vladinoci \cite[Lemma 3.3]{valdinoci_neumann_frac_lap} for the fractional Laplacian and Foghem and Kaßmann \cite{kassmann_foghem_neumann} for more general Lévy measures.  
	\begin{cor}\label{Corollary 1.4}
		Let $\Omega$ and $\mu$ satisfy the assumptions from \cref{Theorem 1.1} and $A_s$ be as in \eqref{eq:definition_A_s}. If $u\in H^s(\Omega)\cap L^1(\BR^d, \nu^\star(x)\dishort x)$ satisfies 
		\begin{align}
		\CE_{A_s}(u,\eta)&\le 0 \text{ for all } \eta \in C_c^\infty(\Omega), \eta \ge 0,\label{eq:weakmaximum_cond1}\\
		u&\le 0 \text{ \aev on } \Omega^c, \label{eq:weakmaximum_cond2}
		\end{align}
		 then $u\le 0$ \aev in $\Omega$. 
	\end{cor}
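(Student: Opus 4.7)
The plan is to deduce \cref{Corollary 1.4} from \cref{Theorem 1.1} by checking its three hypotheses. The integrability $u\in L^1(\Omega)$ is immediate since $H^s(\Omega)\hookrightarrow L^2(\Omega)\hookrightarrow L^1(\Omega)$ on the bounded domain $\Omega$, and $u\in L^1(\BR^d,\nu^\star(x)\di x)$ together with $u\le 0$ \aev on $\Omega^c$ are part of the hypotheses. Thus the two substantive tasks are (i) turning the energy-form inequality \eqref{eq:weakmaximum_cond1} into the distributional inequality \eqref{eq:ultrasubharmonic}, and (ii) establishing the boundary regularity \eqref{eq:boundary_regularity_assumption}.

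For (i) I would prove the nonlocal Green-Gauß identity
\[
	\CE_{A_s}(u,\eta)=(u,A_s\eta)_{L^2(\BR^d)}\qquad\text{for every }\eta\in C_c^\infty(\Omega).
\]
Starting from the polar representation of $\CE_{A_s}$, the substitution $(x,r,\theta)\mapsto (x+r\theta,-r,\theta)$ preserves the indicator $\1_{(\Omega^c\times\Omega^c)^c}$ and the measure $\mu(\dishort\theta)\,|r|^{-1-2s}\di r\di x$, so expanding the product in the definition of $\CE_{A_s}$ and applying Fubini collapses both cross terms into $\int u(x)\,A_s\eta(x)\di x$. The exchange of integrals is justified because $A_s\eta$ is bounded on $\Omega$ and decays at infinity comparably to $\nu^\star$, hence $u\,A_s\eta\in L^1(\BR^d)$ by the assumed integrability of $u$, precisely as in the discussion preceding \cref{Theorem 1.1}. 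Applying this identity to \eqref{eq:weakmaximum_cond1} yields \eqref{eq:ultrasubharmonic}.

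For (ii), by \cref{rem:theorem} it suffices to show $u^+\in L^1(\Omega,\dist{x}{\Omega^c}^{-s}\di x)$, and by the Cauchy-Schwarz inequality it is enough to establish the weighted $L^2$ bound $u^+\in L^2(\Omega,\dist{x}{\Omega^c}^{-2s}\di x)$. Since the positive part acts as a contraction on $H^s(\Omega)$, we have $u^+\in H^s(\Omega)$; since $u\le 0$ \aev on $\Omega^c$, the function $u^+\1_\Omega$ coincides \aev with the zero-extension of $u^+$. Provided this extension lies in $H^s(\BR^d)$, a fractional Hardy inequality on the bounded Lipschitz domain $\Omega$ produces the desired weighted $L^2$ bound and completes the reduction.

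The main obstacle is verifying that the zero-extension of $u^+$ does belong to $H^s(\BR^d)$. For $s\in(0,\tfrac12)$ this is automatic, since $C_c^\infty(\Omega)$ is dense in $H^s(\Omega)$ on Lipschitz domains, so the fractional Hardy estimate applies directly. For $s\in[\tfrac12,1)$ the trace $\gamma u\in H^{s-1/2}(\partial\Omega)$ exists; combined with $u\le 0$ \aev on $\Omega^c$ and the continuity of the trace, one deduces $\gamma u^+=0$ on $\partial\Omega$, which places $u^+$ in the zero-trace subspace and again activates Hardy. Once \eqref{eq:ultrasubharmonic} and \eqref{eq:boundary_regularity_assumption} are secured, \cref{Theorem 1.1} is directly applicable and yields $u\le 0$ \aev in $\Omega$.
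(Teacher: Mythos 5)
Your reduction of \eqref{eq:weakmaximum_cond1} to \eqref{eq:ultrasubharmonic} is essentially the paper's argument (the paper runs it through the truncated operator $A_s^\kappa$, identity \eqref{eq:greengauß_kappa}, and dominated convergence, with \cref{lem:energy_exists} supplying the absolute convergence that your appeal to Fubini needs), so step (i) is fine. The gap is in step (ii). First, your route demands $u^+\in L^2(\Omega,\dist{x}{\partial\Omega}^{-2s}\di x)$, i.e.\ a fractional Hardy inequality at order $s$ with weight $\dist{x}{\partial\Omega}^{-2s}$; at $s=\tfrac12$ this is the critical case $sp=1$, where the Hardy inequality fails even for functions in $C_c^\infty(\Omega)$, so your argument breaks down there no matter how the trace issue is resolved. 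Second, for $s>\tfrac12$ your claim that $\gamma u^+=0$ on $\partial\Omega$ ``by $u\le 0$ \aev on $\Omega^c$ and continuity of the trace'' is unjustified: only the restriction $u|_\Omega$ is assumed to lie in $H^s(\Omega)$, the trace operator sees interior values only, and nothing in the hypotheses prevents $u$ from jumping across $\partial\Omega$ (there is no assumption of finite global energy $\CE_{A_s}(u,u)$ coupling inside and outside). So the zero-extension of $u^+$ need not belong to $H^s(\BR^d)$, and the Hardy inequality at order $s>\tfrac12$ is not available.

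The paper avoids both problems by asking much less of $u$: since $H^s(\Omega)\subset H^{s/2}(\Omega)$, it writes $\dist{x}{\partial\Omega}^{-s}=\dist{x}{\partial\Omega}^{-s/2}\cdot\dist{x}{\partial\Omega}^{-s/2}$ and applies Hölder,
\begin{equation*}
	\int\limits_{\Omega}\frac{\abs{u(x)}}{\dist{x}{\partial\Omega}^{s}}\di x\le \norm{\dist{\cdot}{\partial\Omega}^{-s/2}}_{L^2(\Omega)}\Big(\int\limits_{\Omega}\frac{u(x)^2}{\dist{x}{\partial\Omega}^{s}}\di x\Big)^{1/2},
\end{equation*}
where the first factor is finite because $s<1$ and the second is controlled by $\norm{u}_{H^{s/2}(\Omega)}$ through the fractional Hardy inequality at the subcritical order $s/2<\tfrac12$, which holds for \emph{all} of $H^{s/2}(\Omega)$ on a bounded Lipschitz domain, with no trace or extension condition and uniformly for every $s\in(0,1)$. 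This yields $u^+\in L^1(\Omega,\dist{x}{\partial\Omega}^{-s}\di x)$ and hence \eqref{eq:boundary_regularity_assumption} via \cref{rem:theorem}. To repair your proof you would either have to adopt this weaker-weight argument or supply a genuinely new argument for the trace of $u^+$ and a separate treatment of $s=\tfrac12$; as written, the proposal does not establish \eqref{eq:boundary_regularity_assumption} for $s\ge\tfrac12$.
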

	\begin{rem}
		\begin{enumerate}[(i)]
			\item{ The assumption $u\in H^s(\Omega)\cap L^1(\BR^d, \nu^\star(x)\dishort x)$ implies that $\CE_{A_s}(u,\eta)$ exists, see \cref{lem:energy_exists}. }
			\item{ The proof of \cref{Corollary 1.4} uses $u\in H^{s/2}(\Omega)$ and fractional Hardy inequality to deduce the integrability $u\in L^1(\Omega, \dist{x}{\partial\Omega}^{-s}\dishort x)$. But $u\in H^{s/2}(\Omega)\cap L^1(\BR^d, \nu^\star(x)\dishort x)$ is not sufficient for $\CE_{A_s}(u,\eta)$ to exist for all $s\in (0,1)$. }
		\end{enumerate}
	\end{rem}
\subsection{Outline}
In \cref{Preliminaries} we prove basic properties of the operator $A_s$ and introduce the weak solution concept. Additionally, we state regularity results of solutions and provide technical ingredients for the proof of \cref{Theorem 1.1}. \cref{prop:calculation_distancefunction} connects the Hölder regularity of solutions with the assumption \eqref{eq:boundary_regularity_assumption}.
In \cref{sec:proof_theorem} we prove \cref{Theorem 1.1} and \cref{Corollary 1.4}.
In \cref{sec:appendix} we compare tail spaces and in \cref{appendix B} we prove a maximum principle for the Laplacian.

\subsection*{Acknowledgments}
Financial support by the German Research Foundation (GRK 2235 - 282638148) is gratefully acknowledged. We would like to thank Moritz Kaßmann and Tobias Weth for very helpful discussions.
\section{Preliminaries}\label{Preliminaries}
We set $a\wedge b := \min\{ a,b \}$, $a \vee b := \max \{ a,b \}$ and $a^+:=\max\{ a,0 \}$ for $a,b\in \BR$. We denote the set of Hölder continuous functions on $\Omega \subset \BR^d$ by $C^\alpha(\Omega) = C^{\floor{\alpha},\alpha-\floor{\alpha}}(\Omega)$ for all $\alpha>0$. In case that $\Omega$ is bounded, we equip the space $C^{\alpha}(\overline{\Omega})$ with the usual norm 
\begin{equation*}
		\norm{u}_{C^\alpha(\Omega)}:= \sum\limits_{\abs{\beta}\le \floor{\alpha}} \sup\limits_{x\in \Omega} \abs{\partial^\beta u(x)} + \sum\limits_{\abs{\beta}=\floor{\alpha}} [\partial^{\beta} u]_{C^{\alpha-\floor{\alpha}}(\Omega)     }
	\end{equation*}
	with the Hölder semi norm for $0< s <1$ \begin{equation*}
		[u]_{C^s(\Omega)}= \sup\limits_{x,y\in \Omega} \frac{\abs{u(x)-u(y)}}{\abs{x-y}^s}.
	\end{equation*} $H^s(\Omega)$ is the standard $L^2$-based Sobolev-Slobodeckij space for $s\in(0,1)$. We also define the distance function 
	\begin{align*}
		\dist{x}{\Omega} := \inf\{ \abs{x-y}\,|\, y\in \Omega \},\qquad
		\dist{\Omega}{\Omega'} := \inf \{ \dist{x}{\Omega'}\,|\, x\in \Omega \},
	\end{align*}
	where $\Omega,\Omega'\subset \BR^d$ are open sets and $x\in \BR^d$. For an open set $\Omega\subset\BR^d$ and $\varepsilon>0$ we introduce the thinned and thickened sets
	\begin{equation}\label{eq:definition_omega_epsilon}
		\Omega_{\varepsilon}:=\{ x\in \Omega\,|\, \dist{x}{\partial\Omega}>\varepsilon\},\qquad
		\Omega^{\varepsilon}:=\{ x\in \BR^d\,|\, \dist{x}{\Omega}<\varepsilon\}.
	\end{equation}
	We say that a domain $\Omega\subset\BR^d$ satisfies uniform exterior ball condition if there exists a radius $\rho>0$ such that for every $x\in \partial\Omega$ there exists a ball $B\subset \overline{\Omega}^c$ of radius $\rho$ such that $\overline{B}\cap \overline{\Omega}= \{x\}$. \smallskip
	
	The following lemma ensures the existence of a sequence of $C^\infty$ subdomains exhausting a bounded Lipschitz domain satisfying uniform exterior ball condition. This type of domain exhaustion is classical. We are particularly interested in a uniform bound of the exterior ball radius. The result is taken from the work by Mitrea, see \cite[Lemma 6.4]{mitrea}.
	\begin{lemma}{\cite[Lemma 6.4]{mitrea}}\label{kor:approximation_radius}
		Let $\Omega\subset \BR^d$ be a bounded Lipschitz domain satisfying uniform exterior ball condition. There exists a sequence of $C^\infty$-domains $\{ D_\varepsilon \}_{0<\varepsilon<\varepsilon_0}$ such that
		\begin{enumerate}[(i)]
			\item{ $D_\varepsilon\subset D_{\varepsilon'}\ssubset \Omega$ for all $0<\varepsilon'<\varepsilon<\varepsilon_0$, $\cup_{0<\varepsilon<\varepsilon_0}D_\varepsilon=\Omega$, }
			\item{ $D_\varepsilon$ satisfies uniform exterior ball condition with a radius independent of $\varepsilon$, }
			\item{ there exists a constant $\lambda>1$ such that 
				\begin{equation*}
					\varepsilon\le \dist{\partial D_\varepsilon}{\partial \Omega}, \quad \sup\{ \dist{x}{\partial \Omega}\,|\, x\in \partial D_\varepsilon \}\le \lambda \,\varepsilon
				\end{equation*}		
				for all $0<\varepsilon<\varepsilon_0$.
			}
		\end{enumerate}
	\end{lemma}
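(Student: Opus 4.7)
The plan is to smooth out the canonical exhaustion $\Omega_\varepsilon := \{x \in \Omega : \dist{x}{\partial \Omega} > \varepsilon\}$, which already satisfies (i), (iii), and an exterior ball condition of radius $\rho + \varepsilon \ge \rho$ inherited from $\Omega$, but whose boundary is in general only Lipschitz. To produce $C^\infty$ boundaries, I would work with the signed distance function $d(x) := \dist{x}{\Omega^c} - \dist{x}{\Omega}$, which is $1$-Lipschitz on $\BR^d$, and mollify it: set $d_\delta := d \ast \phi_\delta$ for a standard mollifier $\phi_\delta$ supported in $B_\delta(0)$. Then $d_\delta \in C^\infty(\BR^d)$ remains $1$-Lipschitz and satisfies $\|d_\delta - d\|_{L^\infty(\BR^d)} \le \delta$. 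By Sard's theorem, almost every $c \in \BR$ is a regular value of $d_\delta$, so the super-level set $\{d_\delta > c\}$ has $C^\infty$ boundary for such $c$.

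For $\varepsilon > 0$ small, set $\delta_\varepsilon := \varepsilon/N$ for a large fixed $N$ and pick a regular value $c_\varepsilon \in (\varepsilon + \delta_\varepsilon, \varepsilon + 2\delta_\varepsilon)$ of $d_{\delta_\varepsilon}$; define $D_\varepsilon := \{d_{\delta_\varepsilon} > c_\varepsilon\}$. Choosing the parameters monotonically in $\varepsilon$ enforces the nesting in (i). The Lipschitz sandwich
\begin{equation*}
\{d > c_\varepsilon + \delta_\varepsilon\} \subset D_\varepsilon \subset \{d > c_\varepsilon - \delta_\varepsilon\},
\end{equation*}
which is immediate from $\|d_{\delta_\varepsilon} - d\|_\infty \le \delta_\varepsilon$, yields both $\bigcup_\varepsilon D_\varepsilon = \Omega$ and property (iii): every $x \in \partial D_\varepsilon$ satisfies $d(x) \in [c_\varepsilon - \delta_\varepsilon, c_\varepsilon + \delta_\varepsilon] \subset [\varepsilon, (1 + 3/N)\varepsilon]$, so one may take $\lambda = 1 + 3/N$.

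The main obstacle is (ii), the uniform exterior ball condition for the smoothed set. For the unmollified super-level sets the inheritance is straightforward: at $x_t \in \partial\{d > t\}$ with closest boundary point $x_0 \in \partial \Omega$ and outward unit normal $n$, the original exterior ball $B_\rho(x_0 + \rho n) \subset \Omega^c$ extends along the same axis to $B_{\rho + t}(x_0 + \rho n) \subset \{d \le t\}$, tangent to $\{d = t\}$ at $x_t$. Transferring this to $D_\varepsilon$ is subtle because the translated ball is only contained in $\{d \le c_\varepsilon\}$, not in the slightly smaller set $D_\varepsilon^c = \{d_{\delta_\varepsilon} \le c_\varepsilon\}$. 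The resolution exploits that the distance function of a domain satisfying the exterior ball condition is semi-concave in $\Omega$, a property preserved by convolution with $\phi_{\delta_\varepsilon}$; this provides a one-sided bound on the Hessian of $d_{\delta_\varepsilon}$, hence an upper bound on the principal curvatures of the regular level set $\partial D_\varepsilon$, which is equivalent to a uniform exterior ball condition at each boundary point. Provided $\delta_\varepsilon$ is chosen small relative to $\rho$ (which is automatic for small $\varepsilon$), the resulting exterior ball radius is bounded below by a constant comparable to $\rho$, independent of $\varepsilon$. The careful geometric bookkeeping—coordinating the tangency direction with $\nabla d_{\delta_\varepsilon}(x^*)$ and controlling the loss of radius under mollification—is carried out in \cite[Lemma 6.4]{mitrea}.
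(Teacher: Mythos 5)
Your proposal sketches an actual construction, whereas the paper's own ``proof'' of this lemma is essentially a citation: properties (i)--(ii) are taken directly from \cite[Lemma 6.4]{mitrea}, and (iii) is obtained by tuning the constants inside that construction. Attempting the construction yourself is welcome, but as written there is a genuine gap exactly at (ii), the only delicate item. Sard's theorem only gives you a regular value $c_\varepsilon$, i.e.\ $\nabla d_{\delta_\varepsilon}\neq 0$ on $\partial D_\varepsilon$; it gives no lower bound on $\abs{\nabla d_{\delta_\varepsilon}}$ that is uniform in $\varepsilon$. Your own mechanism for (ii) shows why this matters quantitatively: semiconcavity of $d$ with constant $1/\rho$ passes to $d_{\delta_\varepsilon}$, and then at $x^\ast\in\partial D_\varepsilon$, with $\nu=-\nabla d_{\delta_\varepsilon}(x^\ast)/\abs{\nabla d_{\delta_\varepsilon}(x^\ast)}$, the bound $d_{\delta_\varepsilon}(y)\le c_\varepsilon-\abs{\nabla d_{\delta_\varepsilon}(x^\ast)}\,\nu\cdot(y-x^\ast)+\tfrac{1}{2\rho}\abs{y-x^\ast}^2$ shows $B_r(x^\ast+r\nu)\subset\{d_{\delta_\varepsilon}\le c_\varepsilon\}$ only for $r\lesssim \rho\,\abs{\nabla d_{\delta_\varepsilon}(x^\ast)}$. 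So without an $\varepsilon$-independent gradient lower bound the exterior ball radius may degenerate as $\varepsilon\to 0$, which is precisely what (ii) forbids. And small gradients of the mollified distance are a real threat: the singular set of $d$ (points with several nearest boundary points) can reach $\partial\Omega$ even under the uniform exterior ball condition --- e.g.\ for a lens-shaped $\Omega$ given by the intersection of two balls --- so you cannot argue that the level sets $\{d\approx\varepsilon\}$ stay away from the kinks of $d$.

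The missing ingredient is the Lipschitz character of $\partial\Omega$, which your argument never uses: in a local graph coordinate with Lipschitz constant $L$ one has $d(x+t e_d)\ge d(x)+t(1+L^2)^{-1/2}$ for small $t>0$, and this directional monotonicity is preserved under convolution, so $\abs{\nabla d_{\delta_\varepsilon}}\ge (1+L^2)^{-1/2}$ on $\{d_{\delta_\varepsilon}=c_\varepsilon\}$ once $\varepsilon$ (hence $\delta_\varepsilon$) is below the localization scale of the boundary charts. This simultaneously makes Sard superfluous and turns your semiconcavity bound into a uniform exterior ball radius of order $\rho(1+L^2)^{-1/2}$, as required. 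Two smaller points: the nesting in (i) is not automatic from choosing $c_\varepsilon$ monotone, because the mollification scale changes with $\varepsilon$; the sandwich only yields $D_\varepsilon\subset D_{\varepsilon'}$ when $c_\varepsilon-\delta_\varepsilon\ge c_{\varepsilon'}+\delta_{\varepsilon'}$, so the parameters must be coupled more carefully (for instance via a fixed increasing reparametrization). Finally, closing the argument by deferring ``the geometric bookkeeping'' to \cite[Lemma 6.4]{mitrea} is circular if the aim is to prove this lemma independently --- though, in fairness, the paper itself does no more than cite that lemma and remark that (iii) can be arranged by an appropriate choice of constants in its construction.
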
   
	\begin{proof}
		The existence of the sequence $\{D_\varepsilon\}_{\varepsilon>0}$ satisfying the properties (i) and (ii) follow from \cite[Lemma 6.4]{mitrea}. Property (iii) can be ensured by choosing the constants in the construction of $D_\varepsilon$ in the proof of \cite[Lemma 6.4]{mitrea} accordingly.
	\end{proof}
	The symmetry of the Lévy measure $\nu$ allows us to rewrite $A_s$ to a double difference. 
	\begin{proposition}\label{prop:double_difference}
		Fix an open set $\Omega\subset \BR^d$ and $\alpha>0$. Let $u\in C(\BR^d) \cap C^{2s+\alpha}(\Omega)\cap L^1(\BR^d,\nu^\star(x)\dishort x)$. The term $A_s u(x)$ exists and 
		\begin{equation*}
			A_s u(x)= \frac{1}{2}\int\limits_{ \BR }\int\limits_{S^{d-1}} \frac{2\,u(x)-u(x-r\,\theta)-u(x+r\,\theta)}{\abs{r}^{1+2\,s}}  \mu(\dishort\theta) \di r
		\end{equation*}
		for any $x \in \Omega$.	Furthermore, the function $A_su $ is continuous in $\Omega$.
	\end{proposition}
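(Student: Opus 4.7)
The plan is to exploit the $r\mapsto -r$ symmetry of the one-dimensional measure $|r|^{-1-2s}\di r$ to rewrite the truncated single-difference integrand in terms of the symmetric second difference, which is absolutely integrable and requires no principal value. Concretely, for any $x\in\BR^d$ and $\varepsilon>0$, the substitution $r\mapsto -r$ gives
\begin{equation*}
\int\limits_{|r|>\varepsilon}\int\limits_{S^{d-1}}\frac{u(x)-u(x+r\theta)}{|r|^{1+2s}}\mu(\di\theta)\di r = \int\limits_{|r|>\varepsilon}\int\limits_{S^{d-1}}\frac{u(x)-u(x-r\theta)}{|r|^{1+2s}}\mu(\di\theta)\di r,
\end{equation*}
so both equal the symmetric average featuring $2u(x)-u(x+r\theta)-u(x-r\theta)$. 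Taking $\varepsilon\to 0^+$ will give the claimed formula once we establish absolute integrability.

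Fix $x\in\Omega$ and $\delta>0$ with $\overline{B_\delta(x)}\subset\Omega$, and split the symmetric integral at $|r|=\delta$. On the near region $\{|r|<\delta\}$, the Hölder assumption $u\in C^{2s+\alpha}(\Omega)$ yields the pointwise estimate
\begin{equation*}
\abs{2u(x)-u(x+r\theta)-u(x-r\theta)}\le C\,\norm{u}_{C^{2s+\alpha}(\overline{B_\delta(x)})}\,|r|^{2s+\alpha},
\end{equation*}
proved by Taylor expansion of $u$ at $x$ when $2s+\alpha\ge 1$ (using Hölder continuity of $\nabla u$, whose linear parts cancel by symmetry) and by the triangle inequality when $2s+\alpha<1$. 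Hence the symmetric integrand is dominated by $C|r|^{\alpha-1}$, which is $\mu(\di\theta)\di r$-integrable on $\{|r|<\delta\}\times S^{d-1}$ since $\alpha>0$ and $\mu$ is finite. On the far region $\{|r|\ge\delta\}$, the triangle inequality together with $|r|^{-1-2s}\le C_\delta(1+|r|)^{-1-2s}$ reduces absolute integrability to finiteness of $\int_\BR\int_{S^{d-1}}\abs{u(x\pm r\theta)}(1+|r|)^{-1-2s}\mu(\di\theta)\di r$. Fubini together with the evenness of $r\mapsto(1+|r|)^{-1-2s}$ converts the $\Omega$-integral of this quantity into
\begin{equation*}
\int\limits_\Omega \int\limits_\BR \int\limits_{S^{d-1}} \abs{u(x+r\theta)}(1+|r|)^{-1-2s}\mu(\di\theta)\di r\di x = \int\limits_{\BR^d}\abs{u(y)}\nu^\star(y)\di y<\infty
\end{equation*}
via the substitution $y=x+r\theta$. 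This gives finiteness of the tail integrand for a.e.\ $x\in\Omega$; combined with continuity of $u$ on $\BR^d$, which ensures local boundedness on $\overline{B_\delta(x)}$ and hence control of the contribution of $|u(x)|$ alone, dominated convergence as $\varepsilon\to 0^+$ yields existence of $A_s u(x)$ and the claimed identity.

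For continuity of $A_s u$ on $\Omega$, I would fix a compact $K\ssubset\Omega$ with $K+\overline{B_\delta(0)}\ssubset\Omega$. The near-part bound $C\norm{u}_{C^{2s+\alpha}(K+\overline{B_\delta(0)})}|r|^{\alpha-1}$ is independent of $x\in K$, and the far-part bound can be made $x$-uniform on $K$ by the same $\nu^\star$-argument applied on the enlarged set $K+\overline{B_\delta(0)}$. Continuity of $u$ on $\BR^d$ supplies pointwise convergence of the integrand as $x_n\to x_0$, and dominated convergence concludes. The main obstacle I anticipate is producing an honest, pointwise-in-$x$ and uniform-on-$K$ dominating function for the far-field integrand: the weighted $L^1$ condition only yields integrability in $x$, so the key technical step is a careful Fubini exchange that upgrades this averaged control into the pointwise and uniform tail bound required by dominated convergence.
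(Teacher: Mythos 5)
Your proposal follows essentially the same route as the paper's proof: symmetrize the truncated integral via $r\mapsto-r$ to get the second difference \eqref{eq:doubledifference}, split at $\abs{r}=\delta$, bound the near field by $[u]_{C^{2s+\alpha}(B_\delta(x))}\abs{r}^{\alpha-1}$ with the same case distinction (triangle inequality when $2s+\alpha<1$, fundamental theorem of calculus/Taylor with the Hölder continuity of $\nabla u$ when $2s+\alpha\ge1$, cf.\ \eqref{eq:double_diff_estimate_1}--\eqref{eq:double_diff_estimate_2}), bound the far field by $(1+\tfrac1\delta)^{1+2s}$ times tail terms as in \eqref{eq:double_diff_estimate_3}, convert the tail condition into finiteness of the far-field integral by Fubini and the substitution $y=x+r\theta$, and conclude both the identity and the continuity of $A_su$ by dominated convergence. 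The only place you stop short is exactly the step you flag at the end: passing from finiteness of the far-field integral for a.e.\ $x\in\Omega$ to finiteness for \emph{every} $x\in\Omega$ (and to a usable domination for the continuity argument). The paper does not produce a compact-uniform dominating function either; it settles the a.e.-to-everywhere upgrade with a one-line appeal to the continuity of $u$ on $\BR^d$, and for continuity of $A_su$ it fixes a single point $x$, splits again at $\abs{r}=\delta$, and applies dominated convergence with the same three estimates. So your caution correctly isolates the genuinely delicate point of the argument (note that for singular spectral measures $\mu$, such as sums of Dirac masses, the far-field integral at a fixed $x$ only sees $u$ along finitely many lines, so the $L^1(\BR^d,\nu^\star(x)\dishort x)$ hypothesis alone really does give only a.e.\ control and some additional argument is needed); apart from leaving that step as an announced obstacle rather than carrying it out, your proof is the paper's proof.
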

This result is standard, \eg see Bogdan Byczkowski \cite[Lemma 3.5]{bogdan_tail}, Silvestre \cite[Proposition 2.4]{Silvestre2007} and Di Nezza, Palatucci and Valdinoci \cite[Lemma 3.2]{hitchhiker}.
	\begin{proof}
		Without loss of generality. $2\,s+\alpha<2$ and $2\,s+\alpha<1$ for $s<\tfrac{1}{2}$. Take $x\in \Omega$ and any ball $B_\delta(x) \ssubset \Omega$. We define for $\kappa>0$ 
		\begin{equation}\label{eq:approx_operator}
			A_s^\kappa u(x):= \int\limits_{\{\abs{r}\ge \kappa  \}}\int\limits_{S^{d-1}} \frac{u(x)-u(x+r\,\theta)}{\abs{r}^{1+2\,s}} \mu(\dishort\theta) \di r.
		\end{equation}
		This term exists by the assumption $u\in L^1(\BR^d,\nu^\star(x)\dishort x)$. Now we write $A_s^{\kappa}u(x) = \tfrac{1}{2}A_s^{\kappa}u(x)+\tfrac{1}{2}A_s^{\kappa}u(x)$ and use the transformation $r\mapsto -r$ in its second occurrence.
		\begin{align}
		A_s^\kappa u(x)&= \frac{1}{2}\int\limits_{\{\abs{r}\ge \kappa  \}}\int\limits_{S^{d-1}} \frac{u(x)-u(x+r\,\theta)}{\abs{r}^{1+2\,s}} \mu(\dishort\theta) \di r + \frac{1}{2}\int\limits_{\{\abs{r}\ge \kappa  \}}\int\limits_{S^{d-1}} \frac{u(x)-u(x-r\,\theta)}{\abs{r}^{1+2\,s}} \mu(\dishort\theta) \di r\nonumber\\
		&= \frac{1}{2}\int\limits_{\{\abs{r}\ge \kappa  \}}\int\limits_{S^{d-1}} \frac{2\,u(x)-u(x-r\,\theta)-u(x+r\,\theta)}{\abs{r}^{1+2\,s}} \mu(\dishort\theta) \di r.\label{eq:doubledifference}
		\end{align}
		For $s<\tfrac{1}{2}$, any $\theta\in S^{d-1}$ and $\abs{r}<\delta$
		\begin{equation}\label{eq:double_diff_estimate_1}
			\frac{\abs{2\,u(x)-u(x-r\,\theta)-u(x+r\,\theta)}}{2\abs{r}^{1+2\,s}}\le
				[u]_{C^{2s+\alpha}(B_\delta(x))} \abs{r}^{\alpha-1}.
		\end{equation}
		For $s\ge \tfrac{1}{2}$, $\theta\in S^{d-1}$ and $\abs{r}<\delta$ the fundamental theorem of calculus yields 
		\begin{align}\label{eq:double_diff_estimate_2}
			\abs{2\, u(x)-u(x-r\theta)-u(x+r\theta)}&=  \big| \int_0^1 \big(\nabla u(x-t\,r\theta)-\nabla u(x+t\,r\theta)\big)\cdot r\theta \di t \big|\nonumber\\
			&\le  \frac{(2\abs{r})^{2\,s+\alpha}}{2\,s+\alpha}\,[u]_{C^{2s+\alpha}(B_\delta(x))} .
		\end{align}
		Lastly, for all $s\in(0,1)$, $\theta \in S^{d-1}$ and $\abs{r}\ge \delta$
		\begin{equation}\label{eq:double_diff_estimate_3}
			\frac{\abs{2\,u(x)-u(x-r\,\theta)-u(x+r\,\theta)}}{2\abs{r}^{1+2\,s}}\le\big(1+\tfrac{1}{\delta}\big)^{1+2s} \frac{\abs{u(x)}+\abs{u(x+r\theta)}+\abs{u(x+r\theta)}}{(1+\abs{r})^{1+2s}}.
		\end{equation}
		Additionally, we know using transformation theorem and Fubini's theorem that $u\in L^1(\BR^d, \nu^\star(x)\dishort x)$ if
		\begin{equation*}
			\int\limits_{\Omega}\int\limits_{\BR} \int\limits_{S^{d-1}}  \abs{u(y+r\theta)} (1+\abs{r})^{-1-2s}  \mu(\dishort\theta) \di r \di y <\infty.
		\end{equation*}
		Therefore, the right-hand side of \eqref{eq:doubledifference} is finite for \aev $x\in \Omega$. Since $u$ is continuous on $\BR^d$, this property holds for every $x\in \Omega$. By the dominated convergence theorem with the previous bounds, the limit $\kappa\to 0$ in \eqref{eq:doubledifference} exists. Lastly, we show that $A_s u$ is continuous in $\Omega$. We fix $x\in \Omega$ and write $A_s u(x)$ as 
		\begin{equation*}
			A_su(x)= \frac{1}{2}\Bigg(\int\limits_{\{\abs{r}\ge \delta  \}}+\int\limits_{\{\abs{r}< \delta  \}}\Bigg)\int\limits_{S^{d-1}} \frac{2\,u(x)-u(x-r\,\theta)-u(x+r\,\theta)}{\abs{r}^{1+2\,s}} \mu(\dishort\theta) \di r.
		\end{equation*}
		The dominated convergence theorem, the estimates \eqref{eq:double_diff_estimate_1}, \eqref{eq:double_diff_estimate_2} and \eqref{eq:double_diff_estimate_3} and the continuity of $u$ yield the continuity of $A_s u$ at $x$.
	\end{proof}
	\begin{proposition}\label{prop:Linfty_bound}
		Let $\Omega\subset \BR^d$ be open and bounded, $s\in(0,1)$ and $\alpha>0$. There exists a constant $C>0$ such that
		\begin{equation}\label{eq:fraclaphoelderinequality}
			\norm{A_s \phi}_{L^\infty(\Omega)}\le C\, \norm{\phi}_{C^{2s+\alpha}(\Omega)}
		\end{equation}
		for all  $\phi \in C_c^{2s+\alpha}(\Omega)$ extended by zero to $\BR^d$. 
	\end{proposition}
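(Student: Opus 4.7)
The plan is to apply \cref{prop:double_difference} to the extension of $\phi$ by zero — which lies in $C^{2s+\alpha}(\BR^d)\cap C_c(\BR^d)$ and hence automatically in $L^1(\BR^d,\nu^\star(x)\dishort x)$ — so that, for every $x\in\Omega$, the value $A_s\phi(x)$ admits the symmetric double-difference representation
\begin{equation*}
A_s\phi(x)=\frac{1}{2}\int_{\BR}\int_{S^{d-1}}\frac{2\phi(x)-\phi(x-r\theta)-\phi(x+r\theta)}{\abs{r}^{1+2s}}\,\mu(\dishort\theta)\di r,
\end{equation*}
and then to bound this integral by $\norm{\phi}_{C^{2s+\alpha}(\BR^d)}$ uniformly in $x$.

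I would split the radial integral at $\abs{r}=1$. On $\{\abs{r}<1\}$, the estimates \eqref{eq:double_diff_estimate_1} (when $s<\tfrac12$) and \eqref{eq:double_diff_estimate_2} (when $s\ge\tfrac12$), applied to the extension of $\phi$ with $\delta=1$, both dominate the integrand pointwise by $C\,[\phi]_{C^{2s+\alpha}(\BR^d)}\,\abs{r}^{\alpha-1}$; this is integrable in $r$ on $(0,1)$ because $\alpha>0$, and the $\theta$-integral is controlled by $\mu(S^{d-1})<\infty$. On $\{\abs{r}\ge 1\}$, the trivial bound $\abs{2\phi(x)-\phi(x-r\theta)-\phi(x+r\theta)}\le 4\norm{\phi}_{L^\infty(\BR^d)}$ combined with $\int_{\{\abs{r}\ge 1\}}\abs{r}^{-1-2s}\di r=s\inv$ yields a contribution bounded by $C\,\mu(S^{d-1})\,s\inv\norm{\phi}_{L^\infty(\BR^d)}$.

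Adding the two contributions gives
\begin{equation*}
\norm{A_s\phi}_{L^\infty(\Omega)}\le C\,\mu(S^{d-1})\Bigl(\alpha\inv[\phi]_{C^{2s+\alpha}(\BR^d)}+s\inv\norm{\phi}_{L^\infty(\BR^d)}\Bigr)\le C'\norm{\phi}_{C^{2s+\alpha}(\BR^d)},
\end{equation*}
which is exactly \eqref{eq:fraclaphoelderinequality}. No substantial obstacle arises: the argument is a routine splitting built from the three ingredients already isolated in the proof of \cref{prop:double_difference}, together with the standing assumption $\mu(S^{d-1})<\infty$. The only mild technical point is to reduce, without loss of generality and exactly as in the proof of \cref{prop:double_difference}, to the regime $2s+\alpha<2$ (and $2s+\alpha<1$ when $s<\tfrac12$) so that the two short-range Hölder estimates apply directly.
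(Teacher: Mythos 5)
Your proposal is correct and follows essentially the same route as the paper: the paper's proof is precisely to rerun the estimates \eqref{eq:double_diff_estimate_1}--\eqref{eq:double_diff_estimate_3} from \cref{prop:double_difference} uniformly in $x\in\Omega$, replacing the small ball $B_\delta(x)$ by a fixed large ball $B\ssupset\Omega$, which is the same splitting and the same use of $\mu(S^{d-1})<\infty$ that you carry out with the zero-extension and the cutoff at $\abs{r}=1$. The only point worth making explicit is that the global seminorm $[\phi]_{C^{2s+\alpha}(\BR^d)}$ of the zero-extension is controlled by $\norm{\phi}_{C^{2s+\alpha}(\Omega)}$ (standard for compactly supported functions), so your final bound indeed gives \eqref{eq:fraclaphoelderinequality}.
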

	\begin{proof}
		The claim follows by using the arguments in \cref{prop:double_difference} uniformly for any $x\in \Omega$. Instead of picking a small ball $B_\delta(x)$ in the beginning of the proof of \cref{prop:double_difference}, we choose a ball $B\ssupset \Omega$.
	\end{proof}
	\begin{lemma}\label{prop:innerprod_exists}
		Let $\Omega\subset \BR^d$ be open and bounded. If $u\in L^1(\Omega)\cap L^1(\BR^d, \nu^\star(x)\dishort x)$ and $\eta\in C_c^\infty(\Omega)$, then $(u,A_s \eta)_{L^2(\BR^d)} $ exists.
	\end{lemma}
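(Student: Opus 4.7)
The plan is to split the inner product as
\[
 (u, A_s\eta)_{L^2(\BR^d)} = \int_\Omega u(x)\,A_s\eta(x)\di x + \int_{\Omega^c} u(x)\,A_s\eta(x)\di x
\]
and to verify absolute integrability of each piece separately, matching the two assumed integrability conditions on $u$ with two different bounds on $A_s\eta$.

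For the interior integral, I would simply invoke \cref{prop:Linfty_bound}: since $\eta\in C_c^\infty(\Omega)\subset C_c^{2s+\alpha}(\Omega)$ for any $\alpha\in(0,1-2s)\cup (0,2-2s)$ (after extending $\eta$ by zero), we obtain $\|A_s\eta\|_{L^\infty(\Omega)}\le C\|\eta\|_{C^{2s+\alpha}(\BR^d)}<\infty$. Combined with $u\in L^1(\Omega)$, the integral over $\Omega$ is absolutely convergent.

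The exterior integral is the main point. Set $K:=\supp\eta\subset\Omega$ and $d_0:=\dist{K}{\Omega^c}>0$. For $x\in\Omega^c$ the value $\eta(x)$ vanishes, so
\[
 |A_s\eta(x)|=\left|\int_{\BR^d}\eta(x+h)\,\nu(\di h)\right|\le \|\eta\|_{L^\infty(\BR^d)}\int_\BR\int_{S^{d-1}}\1_K(x+r\theta)\,|r|^{-1-2s}\mu(\di\theta)\di r.
\]
Whenever the indicator is nonzero we have $|r|=|x+r\theta-x|\ge\dist{x}{K}\ge d_0$, so $|r|^{-1-2s}\le (1+d_0^{-1})^{1+2s}(1+|r|)^{-1-2s}$. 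Using $\1_K\le\1_\Omega$ and the definition \eqref{eq:nustar} of $\nu^\star$ then yields the tail estimate
\begin{equation*}
 |A_s\eta(x)|\le C_\eta\,\nu^\star(x)\qquad\text{for all }x\in\Omega^c,
\end{equation*}
with $C_\eta=(1+d_0^{-1})^{1+2s}\|\eta\|_{L^\infty(\BR^d)}$. Since $u\in L^1(\BR^d,\nu^\star(x)\di x)$, the exterior integral is absolutely convergent as well.

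I do not anticipate any real obstacle: the only technical point is producing the constant $C_\eta$, and that is handled by the elementary observation that $\supp\eta$ is strictly separated from $\Omega^c$, which forces the radial variable $r$ to be bounded away from the singularity $r=0$ whenever the integrand is nonzero. The same tail estimate is presumably what the paper later refers to as \eqref{eq:A_seta_tail_estimate} and uses in the remark about replacing $\Omega^c$ by $\Omega^c\cap\supp(\nu^\star)$.
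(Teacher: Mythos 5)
Your proposal is correct and follows essentially the same route as the paper: boundedness of $A_s\eta$ on $\Omega$ via \cref{prop:Linfty_bound} paired with $u\in L^1(\Omega)$, and for $x\in\Omega^c$ the observation that the support of $\eta$ keeps $\abs{r}\ge\dist{\supp\eta}{\Omega^c}$, giving exactly the tail bound $\abs{A_s\eta(x)}\le \norm{\eta}_{L^\infty}\big(1+\tfrac{1}{\delta}\big)^{1+2s}\nu^\star(x)$ of \eqref{eq:A_seta_tail_estimate}, paired with $u\in L^1(\BR^d,\nu^\star(x)\dishort x)$. No gaps.
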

	\begin{proof}
		\cref{prop:Linfty_bound} yields $A_s\eta \in L^\infty(\Omega)$. This and $u\in L^1(\Omega)$ imply that $(u,A_s\eta)_{L^2(\Omega)}$ exists. Fix $\delta= \dist{\supp \eta}{\Omega^c}>0$. For any $x\in \Omega^c$
		\begin{align*}
			A_s\eta(x)= -\tfrac{1}{2}\int\limits_{\{ \abs{r}>\delta \}}\int\limits_{S^{d-1}} \frac{\eta(x-r\theta)+ \eta(x+r\theta)}{\abs{r}^{1+2s}}\mu(\dishort \theta) \di r= -\int\limits_{\{ \abs{r}>\delta \}}\int\limits_{S^{d-1}} \frac{\eta(x+r\theta)}{\abs{r}^{1+2s}}\1_{\Omega}(x+r\theta)\mu(\dishort \theta) \di r.
		\end{align*}
		Recall the definition of $\nu^\star$ in \eqref{eq:nustar}. Thereby, for $x\in \Omega^c$
		\begin{equation}\label{eq:A_seta_tail_estimate}
			\abs{A_s\eta(x)}\le \norm{\eta}_{L^\infty}\big(1+\tfrac{1}{\delta}\big)^{1+2s} \nu^\star(x).
		\end{equation}
		The claim follows from $u\in L^1(\BR^d,\nu^\star(x)\dishort x)$.
	\end{proof}
	\begin{lemma}\label{lem:energy_exists}
		Let $\Omega$ be a bounded Lipschitz domain. For any $u \in H^s(\Omega)\cap L^1(\BR^d, \nu^\star(x)\dishort x)$ and $\eta \in C_c^\infty(\Omega)$ the bilinear form $\CE_{A_s}(u,\eta)$ exists.
	\end{lemma}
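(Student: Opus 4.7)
Since $\eta \in C_c^\infty(\Omega)$, the integrand in the definition of $\CE_{A_s}(u,\eta)$ vanishes whenever both $x$ and $x+r\theta$ lie in $\Omega^c$; hence the indicator $\1_{(\Omega^c\times\Omega^c)^c}$ may be removed and it suffices to prove absolute integrability of
\[
\frac{(u(x)-u(x+r\theta))(\eta(x)-\eta(x+r\theta))}{|r|^{1+2s}}
\]
over $\BR^d \times \BR \times S^{d-1}$ against $\di x\,\di r\,\mu(\di\theta)$. Setting $K:=\supp \eta$, I would pick $\delta > 0$ with $3\delta < \dist{K}{\Omega^c}$ and split the $r$-integration into a near regime $|r| \le \delta$ and a far regime $|r| > \delta$.

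\medskip

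\noindent\textbf{Near regime.} When $|r| \le \delta$, the support of $\eta$ forces both $x$ and $x+r\theta$ to lie in $K^{2\delta} := \{z : \dist{z}{K} < 2\delta\} \ssubset \Omega$. The smoothness of $\eta$ gives $|\eta(x)-\eta(x+r\theta)| \le \|\nabla \eta\|_\infty |r|$, and Cauchy--Schwarz in $\di x\,\di r\,\mu(\di\theta)$ reduces matters to bounding
\[
\int_{S^{d-1}}\int_{K^\delta}\int_{|r|\le \delta}\frac{|u(x)-u(x+r\theta)|^2}{|r|^{1+2s}}\,\di r\,\di x\,\mu(\di\theta),
\]
the complementary factor $\int_{K^\delta}\int_{|r|\le\delta}|r|^{1-2s}\,\di r\,\di x$ being finite since $1-2s > -1$. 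I would then introduce a cutoff $\psi \in C_c^\infty(\Omega)$ with $\psi \equiv 1$ on $K^{2\delta}$; by the standard cutoff argument $u\psi$ extended by zero lies in $H^s(\BR^d)$ with $\|u\psi\|_{H^s(\BR^d)} \le C(\psi)\|u\|_{H^s(\Omega)}$, and for the $(x,r,\theta)$ in question one has $u(x) = (u\psi)(x)$ and $u(x+r\theta) = (u\psi)(x+r\theta)$. Plancherel's identity then yields, \emph{uniformly} in $\theta \in S^{d-1}$,
\[
\int_{\BR^d}\int_{\BR}\frac{|(u\psi)(x)-(u\psi)(x+r\theta)|^2}{|r|^{1+2s}}\,\di r\,\di x \;=\; c_s\int_{\BR^d}|\theta\cdot\xi|^{2s}|\widehat{u\psi}(\xi)|^2\,\di \xi \;\le\; c_s\|u\psi\|_{H^s(\BR^d)}^2,
\]
and integrating against the finite measure $\mu$ closes this case.

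\medskip

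\noindent\textbf{Far regime and main obstacle.} For $|r| > \delta$ I would use $|\eta(x)-\eta(x+r\theta)| \le 2\|\eta\|_\infty$ together with the fact that the integrand vanishes unless $x \in K$ or $x+r\theta \in K$. By the symmetry $(x,r)\mapsto(x+r\theta,-r)$ it suffices to treat the piece with $x \in K$, where $|u(x)-u(x+r\theta)| \le |u(x)| + |u(x+r\theta)|$. The $|u(x)|$-contribution is controlled by $\|u\|_{L^1(K)}\mu(S^{d-1})\int_{|r|>\delta}|r|^{-1-2s}\,\di r < \infty$, using $H^s(\Omega)\hookrightarrow L^1(\Omega)$. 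For the $|u(x+r\theta)|$-term, the pointwise bound $|r|^{-1-2s} \le C_\delta (1+|r|)^{-1-2s}$ together with the substitution $y = x+r\theta$ and the reflection $r \mapsto -r$ transforms the integral into
\[
C_\delta \int_{\BR^d}|u(y)|\int_{\BR}\int_{S^{d-1}}\1_K(y+r\theta)(1+|r|)^{-1-2s}\mu(\di\theta)\,\di r\,\di y \;\le\; C_\delta \int_{\BR^d}|u(y)|\nu^\star(y)\,\di y,
\]
finite since $K \subset \Omega$ and $u \in L^1(\BR^d,\nu^\star(x)\di x)$. The main obstacle lies in the near regime: because $\mu$ may be singular on $S^{d-1}$ (e.g.\ a sum of Dirac masses, as for $\sum_j (-\partial_j^2)^s$), one cannot dominate $\mu$ by surface measure to reduce to the isotropic fractional Laplacian. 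The Fourier-analytic step is what rescues the argument, providing a bound on the directional quadratic form that is \emph{uniform} in the direction $\theta$, precisely what is needed to integrate freely against a general finite $\mu$.
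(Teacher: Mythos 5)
Your argument is correct, but it takes a genuinely different route from the paper's for the key part. The paper splits the form according to $\Omega\times\Omega$, $\Omega\times\Omega^c$ and $\Omega^c\times\Omega$: the two cross terms are handled exactly as in your far regime (bounding $\abs{r}^{-1-2s}$ by $(1+\abs{r})^{-1-2s}$ away from $0$ and using $\norm{u}_{L^1(\BR^d,\nu^\star)}$ and $\norm{u}_{L^1(\Omega)}$), while the diagonal part $\Omega\times\Omega$ is dispatched by citing the norm-comparison result of Dyda and Ka{\ss}mann \cite[Proposition 6.1]{Kassmann_Dyda_Regularity} together with H\"older's inequality, which is where the hypothesis that $\Omega$ is bounded Lipschitz enters. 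You instead split near/far in the variable $r$ using $\supp\eta\ssubset\Omega$, and for the near-diagonal part you localize $u$ by a cutoff $\psi$ (so that $u\psi$, extended by zero, lies in $H^s(\BR^d)$ with norm controlled by $\norm{u}_{H^s(\Omega)}$ — valid since $\supp\psi\ssubset\Omega$, no boundary regularity needed) and then use Plancherel to evaluate the one-dimensional directional quadratic form as $c_s\int\abs{\theta\cdot\xi}^{2s}\abs{\widehat{u\psi}(\xi)}^2\dishort\xi$, a bound uniform in $\theta$ that integrates against the finite measure $\mu$; this correctly handles singular spectral measures such as sums of Dirac masses. What each approach buys: the paper's proof is shorter by delegating the interior estimate to an external proposition, whereas your Fourier-analytic argument is self-contained and, as a byproduct, shows the lemma does not actually require the Lipschitz assumption on $\partial\Omega$ — only boundedness of $\Omega$ and $\eta\in C_c^\infty(\Omega)$. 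The remaining steps (removal of the indicator $\1_{(\Omega^c\times\Omega^c)^c}$ because $\eta$ vanishes on $\Omega^c$, the symmetry reduction $(x,r)\mapsto(x+r\theta,-r)$, and the estimate of the $\abs{u(x+r\theta)}$-tail by $\int\abs{u}\,\nu^\star$) are sound and essentially coincide with the paper's treatment of the cross terms.
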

	\begin{proof}
		We split the energy into $\Omega\times \Omega, \Omega\times \Omega^c$ and $\Omega^c \times \Omega$. The proof for $\Omega\times \Omega^c$ and $\Omega^c \times \Omega$ coincide. We begin with $\Omega^c\times \Omega$. Define $\delta:= \dist{\supp \eta}{\Omega^c}>0$, $C:= (1+1/\delta)^{1+2s}$ and notice $C\abs{r}^{1+2s}\ge (1+\abs{r})^{1+2\,s}$ for any $r>\delta$. Thus,
		\begin{align*}
			&\Big| \int\limits_{\Omega^c} \int\limits_{\BR}\int\limits_{S^{d-1}} \1_{\Omega}(x+r\,\theta)\frac{(u(x)-u(x+r\,\theta))(0-\eta(x+r\theta)  )}{\abs{r}^{1+2\,s}} \mu(\dishort\theta) \di r \di x\Big|\\
			&\qquad\le C \, \norm{\eta}_{L^\infty} \, \int\limits_{\Omega^c} \int\limits_{\{ \abs{r}>\delta \}}\int\limits_{S^{d-1}} \1_{\Omega}(x+r\,\theta)\frac{\abs{u(x)-u(x+r\,\theta)}}{(1+\abs{r})^{1+2\,s}} \mu(\dishort\theta) \di r \di x\\
			&\qquad\le C\, \norm{\eta}_{L^\infty}\, \Big(\norm{u}_{L^1_{\nu^\star}(\BR^d)}+ \tfrac{1}{s\, \delta^{2\,s}} \norm{u}_{L^1(\Omega)} \Big).
		\end{align*}
		For $\Omega\times \Omega$ we want to apply \cite[Proposition 6.1]{Kassmann_Dyda_Regularity} by Dyda and Kaßmann. This is possible since the Lévy measure satisfies 
		\begin{equation*}
			\int\limits_{\BR^d} \big( t \wedge \abs{z} \big)^2 \nu(\dishort z)= \mu(S^{{d-1}}) t^{2-2s}\big( \frac{1}{1-s}+ \frac{1}{s} \big)
		\end{equation*}
		for all $t>0$ and we assumed $\Omega$ to be bounded with Lipschitz boundary. Therefore, \cite[Proposition 6.1]{Kassmann_Dyda_Regularity} and Hölder inequality yield
		\begin{equation*}
			\Big| \int\limits_{\Omega} \int\limits_{\BR^d} (u(x)-u(x+h))(\eta(x)-\eta(x+h)  )\1_{\Omega}(x+h) \nu(\dishort h)  \di x\Big|\le c\, \norm{u}_{H^s(\Omega)}\,\norm{\eta}_{H^s(\Omega)}.
		\end{equation*}
	\end{proof}	
	\begin{Def}[Weak solution]\label{def:weaksol}
		Let $\Omega\subset \BR^d$ be a bounded domain and $\psi\in L^{\infty}(\Omega)$. We say that $\phi$ is a weak solution to the problem
		\begin{alignat}{2}
		A_s \phi &= \psi &\text{ in } \Omega,\nonumber\\
		\phi &= 0 &\text{ on } \Omega^c,\label{eq:fracequation}
		\end{alignat} 
		if $\phi\in L_{\text{loc}}^{1}(\BR^d)$, $\phi=0$ on $\Omega^c$ and 
		\begin{equation*}
			\int\limits_{\BR^d} \phi(x) A_s\eta(x) \di x = \int\limits_{\Omega} \psi(x) \eta(x)\di x
		\end{equation*}
		for all $\eta \in C_c^\infty(\Omega)$.
	\end{Def}	
	There is a rich theory on existence and uniqueness of weak solutions. We refer the reader to Bogdan et al. \cite{bogdan_trace}, Felsinger, Kaßmann and Voigt \cite{Felsinger2013}, Grzywny, Kaßmann and Le\.{z}aj \cite{pointwisesolutionkassmann} and Rutkowski \cite{existence_solutions_rutkowski}. In the following proposition we use the existence theorem from \cite{existence_solutions_rutkowski} to deduce the existence of solutions in the sense of \cref{def:weaksol}. 
	\begin{proposition}\label{th:pointwisesolution}
		Let $\Omega\subset \BR^d$ be an open and bounded set and $\psi \in L^\infty(\Omega)$. There exists a solution $\phi \in L^\infty(\Omega)$ to \eqref{eq:fracequation}.
	\end{proposition}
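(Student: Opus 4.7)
The plan is to first obtain an energy solution via Lax--Milgram or equivalently the existence theorem of \cite{existence_solutions_rutkowski}, then verify it satisfies \cref{def:weaksol} via a nonlocal Green--Gauß identity, and finally bootstrap to an $L^\infty$ bound by Stampacchia's truncation method. Introduce the energy space
\begin{equation*}
V_\Omega := \{ u : \BR^d \to \BR \text{ measurable} \mid \CE_{A_s}(u,u) < \infty,\; u = 0 \text{ \aev on } \Omega^c \},
\end{equation*}
equipped with the inner product $\CE_{A_s}$. The nondegeneracy assumption \eqref{eq:ellip_conditions} together with boundedness of $\Omega$ yields a fractional Poincaré inequality, so $\CE_{A_s}$ is coercive and continuous on $V_\Omega$. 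As $\psi \in L^\infty(\Omega) \subset L^2(\Omega)$, the functional $v \mapsto (\psi, v)_{L^2(\Omega)}$ is bounded on $V_\Omega$, and Lax--Milgram produces a unique $\phi \in V_\Omega$ satisfying $\CE_{A_s}(\phi, v) = (\psi, v)_{L^2(\Omega)}$ for every $v \in V_\Omega$.

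The next step is to verify that this $\phi$ satisfies the distributional identity of \cref{def:weaksol}. For $\eta \in C_c^\infty(\Omega) \subset V_\Omega$, the symmetry of the Lévy measure together with Fubini allow the rewriting
\begin{equation*}
\CE_{A_s}(\phi,\eta) = \int_{\BR^d} \phi(x)\, A_s\eta(x) \di x,
\end{equation*}
in which both sides are finite because $\phi \in L^2(\Omega) \subset L^1(\Omega)$, $A_s\eta \in L^\infty(\Omega)$ by \cref{prop:Linfty_bound}, and $\phi$ vanishes on $\Omega^c$. Combined with the variational identity above, this gives exactly the weak formulation of \cref{def:weaksol}.

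For the $L^\infty$ bound I would test with the truncation $v_k := (\phi - k)^+$, which lies in $V_\Omega$ for every $k > 0$ since $\phi$ vanishes outside $\Omega$. A Kato-type inequality for the nonlocal quadratic form yields
\begin{equation*}
\CE_{A_s}(v_k, v_k) \le \CE_{A_s}(\phi, v_k) = \int_{\{\phi > k\}} \psi\, v_k \di x.
\end{equation*}
Together with a fractional Sobolev embedding on $V_\Omega$ (available thanks to \eqref{eq:ellip_conditions}) and Hölder's inequality, Stampacchia's lemma produces a threshold $k_0 \lesssim \norm{\psi}_{L^\infty(\Omega)}$ beyond which the level set $\{\phi > k_0\}$ has zero measure; applying the same reasoning to $-\phi$ gives $\norm{\phi}_{L^\infty(\Omega)} \lesssim \norm{\psi}_{L^\infty(\Omega)}$.

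\emph{Main obstacle.} The delicate step is the $L^\infty$ estimate. Since $A_s$ can be highly anisotropic (as in the example $\sum_{j=1}^d (-\partial_j^2)^s$), the Sobolev-type inequality driving the Stampacchia iteration cannot be imported from the fractional Laplacian by pointwise comparison; it must be extracted from the nondegeneracy condition \eqref{eq:ellip_conditions}. Once this Sobolev inequality is available on $V_\Omega$, the remainder of the iteration is classical.
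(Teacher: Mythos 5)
Your argument follows essentially the same route as the paper: the same energy space of functions vanishing on $\Omega^c$, existence of the variational solution via Lax--Milgram (the paper cites \cite[Theorem 4.2]{existence_solutions_rutkowski} for exactly this), and verification of the weak formulation of \cref{def:weaksol} by rewriting $\CE_{A_s}(\phi,\eta)=\int \phi\, A_s\eta$. The only divergence is the $L^\infty$ bound: the paper simply invokes \cite[Lemma 5.7]{existence_solutions_rutkowski}, whereas you sketch a Stampacchia truncation and flag the anisotropic Sobolev inequality as the main obstacle. That obstacle is real but resolvable: for $v=0$ on $\Omega^c$ the restricted form coincides with the full-space form, and by Plancherel its symbol is a constant times $\int_{S^{d-1}}\abs{\xi\cdot\theta}^{2s}\mu(\dishort\theta)$, which by \eqref{eq:ellip_conditions} and the finiteness of $\mu$ is comparable to $\abs{\xi}^{2s}$; hence $\CE_{A_s}(v,v)\simeq [v]_{H^s(\BR^d)}^2$ and the usual fractional Sobolev embedding drives the iteration. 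So your plan is workable, just not needed in this strength since the cited lemma already gives boundedness. One caution on the middle step: for $s\ge \tfrac12$ the double integral defining $\CE_{A_s}(\phi,\eta)$ cannot be unfolded by a naive Fubini argument (the integrand $\phi(x)\,\frac{\eta(x)-\eta(x+r\theta)}{\abs{r}^{1+2s}}$ is not absolutely integrable near $r=0$); the paper handles this by first working with the truncated operator $A_s^\kappa$, using the symmetrized identity \eqref{eq:greengauß_kappa}, and then letting $\kappa\to 0$ by dominated convergence, using $\phi\in L^1(\Omega)$, $\supp\phi\subset\overline{\Omega}$, $A_s\eta\in L^\infty$ from \cref{prop:Linfty_bound}, and the finiteness of the energy of $\phi$ and $\eta$ — you should make this truncation explicit rather than appealing to symmetry and Fubini alone.
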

	\begin{proof}
		We introduce the function space $V_{\nu, 0}(\Omega\,|\, \BR^d)= \{ u:\BR^d \to \BR  \,|\, \CE_{A_s}(u,u)<\infty, u =0 \text{ on } \Omega^c \}$ endowed with the inner product $(u,v)_{V_{\nu}}:= (u,v)_{L^2(\Omega)}+ \CE_{A_s}(u,v)$. In \cite[Theorem 4.2]{existence_solutions_rutkowski} Rutkowski proved, via an application of the Lax-Milgram lemma, the existence of an unique $u\in V_{\nu, 0}(\Omega\,|\, \BR^d)$ satisfying
		\begin{equation}\label{eq:weak_sol}
			\CE_{A_s}(u,v)= \int_{\Omega} \psi(x)v(x) \di x 
		\end{equation} 
	for all $v\in V_{\nu,0}(\Omega\,|\,\BR^d)$.
	\cite[Lemma 5.7]{existence_solutions_rutkowski} implies $u \in L^{\infty}(\Omega)$. It remains to show that $u$ is a solution in the sense of \cref{def:weaksol}. Take any $\eta \in C_c^\infty(\Omega)\subset V_{\nu,0}(\Omega\,|\, \BR^d)$. A small calculation yields
	\begin{equation}\label{eq:greengauß_kappa}
		\int\limits_{\BR^d} u(x) A_s^\kappa \eta(x)\di x =  \tfrac{1}{2}\int\limits_{\BR^d} \int\limits_{\{\abs{r}>\kappa\}} \int\limits_{S^{d-1}}  \frac{(u(x)-u(x+r\,\theta))(\eta(x)-\eta(x+r\,\theta))  }{\abs{r}^{1+2\,s}}\mu(\dishort\theta)  \di r\di x.
	\end{equation}
	The left-hand side of \eqref{eq:greengauß_kappa} converges to $\int_{\Omega} u(x) A_s\eta(x) \di x$ by dominated convergence since $u\in L^1(\Omega)$, $\supp(u)\subset \overline{\Omega}$ and $A_s \eta\in L^\infty(\Omega)$ by \eqref{eq:fraclaphoelderinequality}. The right-hand side of \eqref{eq:greengauß_kappa} converges to $\CE_{A_s}(u,\eta)$ by dominated convergence. Thus, the solution $u$ satisfies $\int u A_s\eta = \int \psi \eta$ for all $\eta \in C_c^\infty(\Omega)$.   
	\end{proof}
	Hölder regularity up to the boundary of solutions to Dirichlet problems was proven by Ros-Oton and Serra in \cite{RosOtonOuterRegularity}. The following proposition is taken from their work. For the fractional Laplacian see also Ros-Oton and Serra \cite{ros_oton_frac_lap}.
	\begin{proposition}{\cite[Proposition 4.5]{RosOtonOuterRegularity}}\label{prop:regularity}
		Let $\Omega$ be a bounded Lipschitz domain satisfying the uniform exterior ball condition. Suppose $s\in (0,1)$ and let $\mu$ be any bounded measure on $S^{d-1}$ satisfying the nondegeneracy assumptions \eqref{eq:ellip_conditions}. Let $\psi \in L^\infty(\Omega)$ and $\phi$ be a bounded weak solution of \eqref{eq:fracequation}. Then $\phi\in C^s(\overline{\Omega})$ and there exists a constant $C_{d,\Omega, s, \mu}>0$ such that 
		\begin{equation*}
			\norm{\phi}_{C^s(\Omega)}\le C_{d,\Omega, s, \mu} \, \norm{\psi}_{L^\infty(\Omega)}.
		\end{equation*}
	\end{proposition}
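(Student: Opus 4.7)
The plan is to combine interior Hölder regularity of weak solutions with a boundary growth bound of the form $|\phi(x)|\le C\,\|\psi\|_{L^\infty}\,\dist{x}{\Omega^c}^{s}$, and then glue the two estimates together by a standard scaling argument.

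For the interior piece, I would first rescale: if $x_0\in\Omega$ and $r=\tfrac{1}{2}\dist{x_0}{\partial\Omega}$, then $\tilde\phi(y):=\phi(x_0+r y)$ solves $A_s\tilde\phi=r^{2s}\psi(x_0+ry)$ in $B_1$, using the $2s$-homogeneity of $A_s$ inherited from the stability of the underlying Lévy measure. Because $\phi$ is bounded and the tails of $\tilde\phi$ are also controlled by $\|\phi\|_{L^\infty}$, one may apply a Krylov–Safonov/Caffarelli–Silvestre-type interior Hölder estimate for nondegenerate stable operators (valid by \eqref{eq:ellip_conditions}) to obtain $[\tilde\phi]_{C^\alpha(B_{1/2})}\le C(\|\tilde\phi\|_{L^\infty(\BR^d)}+r^{2s}\|\psi\|_{L^\infty})$ for some $\alpha\in(0,1)$. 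Unscaling gives an oscillation bound on balls of radius $r$ that is compatible with $C^s$ regularity provided $\alpha\ge s$; one can always take $\alpha=s$ at the cost of reducing the allowed integrability exponent.

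The main obstacle is the boundary estimate, and here the uniform exterior ball condition and the nondegeneracy assumption \eqref{eq:ellip_conditions} are both essential. For a boundary point $z\in\partial\Omega$ and the associated exterior ball $B=B_\rho(y_0)\subset\overline\Omega^c$ with $z\in\partial B$, I would construct an explicit barrier of the form
\begin{equation*}
  w(x):=\bigl((|x-y_0|^2-\rho^2)_+\bigr)^{s},
\end{equation*}
and compute $A_s w$ in a one-sided neighborhood of $z$ inside $\Omega$. Using \cref{prop:double_difference} and the nondegeneracy assumption \eqref{eq:ellip_conditions}, one shows that $A_s w(x)\ge c_0>0$ in an annular region $\{x\in\Omega:\dist{x}{\partial B}<\delta\}$, where $c_0$ depends only on $d,s,\mu$ and $\rho$. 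The key point is that \eqref{eq:ellip_conditions} forces the integral defining $A_s w$ to see directions pointing into the exterior ball from essentially every base direction, giving the strict positivity. The barrier $C\|\psi\|_{L^\infty}c_0^{-1}w$ then dominates $\phi$ near $z$ by the weak comparison principle applicable to weak solutions (itself a consequence of the energy formulation, cf.\ \cite{Felsinger2013}), while on $\Omega^c$ both functions vanish (respectively agree in sign). Since $w(x)\asymp \dist{x}{\partial B}^s\asymp \dist{x}{\partial\Omega}^s$ near $z$, one obtains
\begin{equation*}
  |\phi(x)|\le C_{d,\Omega,s,\mu}\,\|\psi\|_{L^\infty(\Omega)}\,\dist{x}{\partial\Omega}^{s}
\end{equation*}
uniformly in $z\in\partial\Omega$, using the uniformity of $\rho$ in the exterior ball condition.

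Finally, I would combine the two estimates. Given $x,y\in\Omega$, set $d_x=\dist{x}{\partial\Omega}$, assume $d_x\le d_y$, and split into two regimes: if $|x-y|\ge d_x/2$ then the boundary bound gives $|\phi(x)|+|\phi(y)|\le C\|\psi\|_{L^\infty}(d_x^s+d_y^s)\le C'\|\psi\|_{L^\infty}|x-y|^s$; if $|x-y|<d_x/2$ then $B_{d_x/2}(x)\subset\Omega$ and the scaled interior estimate gives $|\phi(x)-\phi(y)|\le C\|\psi\|_{L^\infty}|x-y|^s$, where the $d_x^{-s}$ coming from the scaling is absorbed by the boundary bound on $\|\phi\|_{L^\infty(B_{d_x/2}(x))}$. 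Together these yield the claimed $\|\phi\|_{C^s(\overline\Omega)}\le C_{d,\Omega,s,\mu}\|\psi\|_{L^\infty(\Omega)}$. The delicate point throughout is the sharp barrier computation: it must simultaneously use the exterior ball to produce a $d^s$-behaviour and the nondegeneracy of $\mu$ to control $A_s w$ from below, which is precisely why both geometric and spectral assumptions on $\Omega$ and $\mu$ enter.
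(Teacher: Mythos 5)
The paper does not prove this proposition at all: it is imported verbatim from Ros-Oton and Serra \cite[Proposition 4.5]{RosOtonOuterRegularity} (see also \cref{rem:constant}), so there is no in-paper argument to compare against. Your sketch follows the same strategy as the cited source — a barrier built on the uniform exterior ball combined with a rescaled interior Hölder estimate, glued by the standard dyadic argument — so in outline you are reproducing the Ros-Oton--Serra proof rather than offering an alternative.

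As a standalone proof, however, the two steps you merely assert are exactly the technical heart, and the first one has a genuine gap. The claim that $w(x)=\bigl((\abs{x-y_0}^2-\rho^2)_+\bigr)^{s}$ satisfies $A_s w\ge c_0>0$ in an annulus does not follow from a soft argument: the one-dimensional profile $(r_+)^{s}$ is annihilated by the one-dimensional $2s$-stable operator, so the exponent $s$ is borderline and any strict positivity must come entirely from the curvature of the exterior ball as seen through the spectral measure $\mu$. For the fractional Laplacian this is a nontrivial explicit computation; for a general nondegenerate $\mu$ (e.g.\ a sum of Dirac masses, where the operator is not rotation invariant) one must quantify how \eqref{eq:ellip_conditions} converts the curvature contribution into a uniform lower bound, and it is precisely for this that Ros-Oton and Serra devote separate barrier lemmas (in general one perturbs the exponent or the profile rather than using the pure power $s$). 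Without that computation the boundary estimate $\abs{\phi}\le C\norm{\psi}_{L^\infty}\dist{x}{\partial\Omega}^{s}$ is unsupported. Two further points need justification rather than citation by analogy: the comparison principle you invoke is between a merely distributional bounded weak solution and a pointwise supersolution that need not lie in the energy space, so some identification of solution concepts is required; and the interior estimate with exponent at least $s$, uniform over all $\mu$ satisfying \eqref{eq:ellip_conditions} with a fixed nondegeneracy constant, is itself a quoted deep result, not a formality.
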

	\begin{rem}\label{rem:constant}
		The constant $C_{d,\Omega, s, \mu} $ depends on $d,s$, the nondegeneracy constant of $\mu$, see \eqref{eq:ellip_conditions}, the diameter of $\Omega$, the Lipschitz constant and the uniform exterior ball radius of $\Omega$. See \cite[Proposition 4.5]{RosOtonOuterRegularity} and \cite[Proposition 1.1, Lemma 2.7, Lemma 2.9]{ros_oton_frac_lap}.
	\end{rem}
	Additional regularity inside of the domain was proven in the same work \cite[Theorem 1.1]{RosOtonOuterRegularity}. We reduce their result to fit our purposes. 
	\begin{theorem}{\cite[Theorem 1.1]{RosOtonOuterRegularity}}\label{th:interior_regularity}
		Let $s\in (0,1)$ and $\psi\in C^{\alpha}(\overline{B_1})$ for some $\alpha>0$ and $\phi$ be any bounded weak solution to 
		$$
			A_s \phi = \psi \text{ in } B_1.
		$$
		If $\phi \in C^{\alpha}(\BR^d)$, then $\phi\in C^{2s+\alpha}(\overline{B_{1/2}})$ and
		\begin{equation*}
			\norm{\phi}_{C^{2s+\alpha} (B_{1/2})} \le C_{d,s ,\alpha, \mu} \big( \norm{\phi}_{C^{\alpha}(\BR^d)}  + \norm{\psi}_{C^{\alpha}(B_1)} \big),
		\end{equation*}
		whenever $2s+\alpha $ is not an integer.
	\end{theorem}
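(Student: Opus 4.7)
The plan is to prove this interior Schauder estimate by a blow-up and compactness argument, in the spirit of Caffarelli--Silvestre. Assume, for contradiction, that no uniform constant $C$ works. Then there exist sequences $\phi_n \in C^\alpha(\BR^d)$ and $\psi_n \in C^\alpha(\overline{B_1})$ with $A_s \phi_n = \psi_n$ in $B_1$, satisfying $\norm{\phi_n}_{C^\alpha(\BR^d)} + \norm{\psi_n}_{C^\alpha(B_1)} \le 1$, yet $[\phi_n]_{C^{2s+\alpha}(B_{1/2})} \to \infty$. Set $m := \floor{2s+\alpha}$. A worst-point/worst-scale selection in the Campanato-type characterization of $C^{2s+\alpha}$ produces points $x_n \in B_{1/2}$, radii $r_n \to 0$, and polynomials $P_n$ of degree $\le m$ (the best $L^\infty$ approximants of $\phi_n$ on $B_{r_n}(x_n)$) such that the rescaled, normalized functions
\[
	w_n(y) := \frac{\phi_n(x_n + r_n y) - P_n(x_n + r_n y)}{M_n\,r_n^{2s+\alpha}}
\]
(with $M_n \to \infty$ chosen so that $\norm{w_n}_{L^\infty(B_1)} = 1$) satisfy the polynomial growth $\abs{w_n(y)} \le C(1 + \abs{y})^{2s+\alpha}$ uniformly on $\BR^d$, which follows from a standard dyadic iteration on the Campanato bounds.

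A direct scaling computation shows $A_s w_n = g_n$ on $B_{1/(2r_n)}$, where
\[
	g_n(y) = \frac{r_n^{-\alpha}}{M_n}\bigl(\psi_n(x_n + r_n y) - A_s P_n(x_n + r_n y)\bigr) \to 0
\]
locally uniformly, since $M_n \to \infty$ while $\psi_n$ is uniformly $C^\alpha$ and $A_s P_n$ is a polynomial of lower order with coefficients controlled by those of $P_n$. Interior Hölder estimates of some positive order $\beta > 0$ for nondegenerate symmetric stable operators, combined with the polynomial growth and Arzelà--Ascoli, yield a subsequence converging locally uniformly to a limit $w \colon \BR^d \to \BR$ with $\norm{w}_{L^\infty(B_1)} = 1$ and $\abs{w(y)} \le C(1 + \abs{y})^{2s+\alpha}$. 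The sharp Campanato bounds at every scale, combined with the moment condition $\int (\abs{z}^2 \wedge 1)\,\nu(\di z) < \infty$, provide an integrable dominating function for the double-difference integrand in $A_s w_n$ (via \cref{prop:double_difference}); dominated convergence then yields $A_s w = 0$ in $\BR^d$ distributionally. Moreover, the minimality of each $P_n$ ensures that $w$ does not agree on $B_1$ with any polynomial of degree $\le m$.

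The concluding step is a Liouville theorem: any tempered distribution $w$ on $\BR^d$ with polynomial growth at most $2s+\alpha$ satisfying $A_s w = 0$ must be a polynomial of degree $\le m$. On the Fourier side the equation reads $\Psi(\xi)\,\widehat{w}(\xi) = 0$, where $\Psi(\xi) = \int_{S^{d-1}} \abs{\xi \cdot \theta}^{2s}\,\mu(\di\theta)$ is the symbol of $A_s$. The nondegeneracy condition \eqref{eq:ellip_conditions} yields $\Psi(\xi) \ge c\,\abs{\xi}^{2s}$ for $\xi \ne 0$, so $\widehat{w}$ is supported at the origin and $w$ is a polynomial, whose degree is capped by $m$ via the growth rate. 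This contradicts the previous paragraph and closes the argument.

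The main obstacle is the tail control required to pass $A_s w_n \to A_s w$ in the limit: after rescaling, $w_n$ only grows polynomially of order $2s + \alpha$, so integrability against $\nu$ at infinity is not automatic. The polynomial subtraction by $P_n$, together with the uniform sharp Campanato-type bounds, is precisely what produces the integrable dominating function for the second-difference representation, and this is the technically most delicate point. A secondary but conceptually essential issue is the Liouville step itself: it relies critically on nondegeneracy \eqref{eq:ellip_conditions}, without which the symbol $\Psi$ could vanish on a positive-dimensional cone and $\widehat{w}$ could be supported outside the origin.
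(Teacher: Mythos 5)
The paper does not prove this statement at all: it is quoted verbatim from Ros-Oton and Serra \cite[Theorem 1.1]{RosOtonOuterRegularity}, so there is no in-paper argument to compare against. Your overall strategy (contradiction, Campanato-type worst-scale selection, blow-up, compactness, Liouville) is in fact the same general scheme as in the cited source, but as written it has two genuine gaps, and they are precisely the two points you flag as delicate. First, the passage to the limit in the equation: your $w_n$ and the limit $w$ only satisfy $\abs{w(y)}\le C(1+\abs{y})^{2s+\alpha}$, and for such growth the second-difference integrand in \cref{prop:double_difference} behaves like $\abs{r}^{2s+\alpha}\abs{r}^{-1-2s}=\abs{r}^{\alpha-1}$ for large $\abs{r}$, which is not integrable; no dominating function exists, and the subtraction of $P_n$ cannot help since it is already incorporated in $w_n$ and the residual growth is exactly $2s+\alpha$. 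For the same reason the distributional formulation is not available either: for $\eta\in C_c^\infty(\BR^d)$ the function $A_s\eta$ decays only like $\abs{x}^{-d-2s}$, so $\int w\,A_s\eta$ diverges when $w$ grows like $\abs{x}^{2s+\alpha}$. Ros-Oton and Serra circumvent exactly this by running the blow-up on incremental quotients (differences of the solution), so that the limiting equation is posed for functions of growth strictly below $2s$, where both the pointwise and the distributional formulations make sense; without such a device your "dominated convergence" step and even the statement "$A_sw=0$ distributionally" are not defined.

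Second, the Liouville step: even in the regime where the equation does make sense, the Fourier argument is not justified as stated. The symbol $\Psi(\xi)=\int_{S^{d-1}}\abs{\xi\cdot\theta}^{2s}\mu(\dishort\theta)$ is merely continuous and homogeneous of degree $2s$ (for $\mu=\sum_j\delta_{e_j}$ it equals $\sum_j\abs{\xi_j}^{2s}$, which is not smooth on the coordinate hyperplanes), so the product $\Psi\,\widehat{w}$ with a tempered distribution $\widehat{w}$ is not a priori defined, and the conclusion $\supp\widehat{w}\subset\{0\}$ needs a regularization argument or a different route. The cited work instead deduces Liouville from the interior estimate itself by a scaling argument (apply the estimate to $\rho^{-\gamma}w(\rho\,\cdot)$ and let $\rho\to\infty$), which avoids multiplying distributions by non-smooth symbols; the nondegeneracy \eqref{eq:ellip_conditions} enters there through the availability of the interior estimate rather than through positivity of the symbol alone. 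Until these two steps are repaired (blow-up of increments plus a correctly formulated Liouville theorem), the proposed proof does not close.
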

		Now we combine the regularity results of Ros-Oton and Serra for solutions to \eqref{eq:fracequation} with \cref{prop:regularity} and the weak existence from \cref{th:pointwisesolution} to prove the following existence and uniqueness of classical solutions. 
	\begin{proposition}\label{prop:interiorreg}
		Let $\Omega$ be a bounded Lipschitz domain satisfying uniform exterior ball condition, $s\in (0,1)$ and $\mu$ a bounded measure on $S^{d-1}$ satisfying \eqref{eq:ellip_conditions}. We fix $\psi\in C_c^\infty(\Omega)$. There exists a classical solution $\phi\in C^{2s+\alpha}(\Omega)\cap C^s(\overline{\Omega})$ to
		\begin{align*}
			A_s \phi = \psi \text{ in }\Omega,\\
			\phi = 0 \text{ on }\Omega^c. 
		\end{align*}
	\end{proposition}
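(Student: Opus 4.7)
The plan is to combine three existing results and then upgrade the weak solution to a pointwise one. First, since $\psi \in C_c^\infty(\Omega) \subset L^\infty(\Omega)$, \cref{th:pointwisesolution} produces a weak solution $\phi \in L^\infty(\Omega)$ with $\phi \equiv 0$ on $\Omega^c$. Applying \cref{prop:regularity} yields $\phi \in C^s(\overline{\Omega})$. Since $\phi$ then vanishes continuously on $\partial\Omega$, a short estimate---for $x \in \Omega$, $y \in \Omega^c$, and $z \in \partial\Omega$ nearest to $y$ one has $\abs{\phi(x)-\phi(y)} = \abs{\phi(x)-\phi(z)} \le 2^s [\phi]_{C^s(\overline\Omega)} \abs{x-y}^s$---promotes $\phi$ (extended by $0$) to $C^s(\BR^d)$.

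For interior regularity I would localize and rescale. Fix $\alpha \in (0, s]$ with $2s + \alpha \notin \BZ$. For an arbitrary $x_0 \in \Omega$, choose $R > 0$ with $B_R(x_0) \subset \Omega$ and set $\tilde\phi(y) := \phi(x_0 + Ry)$ and $\tilde\psi(y) := R^{2s} \psi(x_0 + Ry)$. A direct change of variable $r \mapsto r/R$ in \eqref{eq:definition_A_s} exhibits the $2s$-homogeneity of $A_s$ and shows $A_s \tilde\phi = \tilde\psi$ weakly in $B_1$. Since $\tilde\phi \in C^s(\BR^d) \subset C^\alpha(\BR^d)$ (by its compact support) and $\tilde\psi \in C^\alpha(\overline{B_1})$, \cref{th:interior_regularity} yields $\tilde\phi \in C^{2s+\alpha}(\overline{B_{1/2}})$, which rescales back to $\phi \in C^{2s+\alpha}(\overline{B_{R/2}(x_0)})$. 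As $x_0 \in \Omega$ is arbitrary, $\phi$ is locally $C^{2s+\alpha}$ on $\Omega$, which is exactly what \cref{prop:double_difference} requires to conclude that $A_s \phi(x)$ exists pointwise and is continuous at every $x \in \Omega$.

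Finally, to promote the weak equation to the pointwise identity $A_s\phi = \psi$ in $\Omega$, I exploit the symmetry of $\nu$. Applying the Fubini computation that produced \eqref{eq:greengauß_kappa} once to the pair $(\phi, \eta)$ and once to $(\eta, \phi)$, the resulting bilinear expression is symmetric in its arguments, so
\[ \int_{\BR^d} \phi(x) A_s^\kappa \eta(x) \di x = \int_{\BR^d} \eta(x) A_s^\kappa \phi(x) \di x \]
for every $\eta \in C_c^\infty(\Omega)$ and every $\kappa > 0$, with $A_s^\kappa$ as in \eqref{eq:approx_operator}. Sending $\kappa \to 0+$ by dominated convergence---the left-hand side controlled via \cref{prop:Linfty_bound} together with $\phi \in L^\infty$, the right-hand side via the double-difference estimates \eqref{eq:double_diff_estimate_1}--\eqref{eq:double_diff_estimate_3} uniformly on a neighborhood of $\supp(\eta) \ssubset \Omega$---gives $(\phi, A_s\eta)_{L^2(\BR^d)} = (A_s\phi, \eta)_{L^2(\BR^d)}$. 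Combined with the weak equation, this leaves $\int_\Omega (A_s\phi - \psi)\eta \di x = 0$ for every $\eta \in C_c^\infty(\Omega)$, and the continuity of $A_s\phi - \psi$ on $\Omega$ forces $A_s\phi = \psi$ pointwise.

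The main obstacle I anticipate is this last step: rigorously justifying the Fubini exchange for the singular kernel and the $\kappa \to 0+$ dominated-convergence passage on the right-hand side, both of which hinge on the interior $C^{2s+\alpha}$ regularity produced in the previous step. Everything else is a direct invocation of the machinery already assembled in \cref{Preliminaries}.
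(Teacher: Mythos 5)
Your proposal is correct and takes essentially the same route as the paper: weak existence via \cref{th:pointwisesolution}, boundary and interior regularity via \cref{prop:regularity} and \cref{th:interior_regularity}, continuity of $A_s\phi$ via \cref{prop:double_difference}, and then upgrading the weak equation to a pointwise one through the truncated Green--Gauss identity \eqref{eq:greengauß_kappa} and dominated convergence. The only cosmetic difference is that the paper passes through the energy identity $\CE_{A_s}(\phi,\eta)=\int_\Omega \psi\,\eta$ from \eqref{eq:weak_sol}, whereas you use the distributional identity of \cref{def:weaksol} together with the symmetry of $A_s^\kappa$; the underlying computation is the same.
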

	\begin{proof}
		By \cref{th:pointwisesolution}, there exists a bounded weak solution $\phi\in L^\infty(\Omega)$ to  \eqref{eq:fracequation} in the sense of \cref{def:weaksol}. We know $\phi\in C^s(\BR^d)\cap C^{2s+\alpha}(\Omega)$ by \cref{prop:regularity} and \cref{th:interior_regularity}. Here $\alpha>0$ such that $2s+\alpha$ is not an integer. By \cref{prop:double_difference}, $A_s \phi$ is continuous in $\Omega$. We fix $\eta\in C_c^\infty(\Omega)$. Since $A_s\phi$ is continuous in $\Omega$, $A_s\phi \in L^\infty(\supp(\eta))$. The choice of $\phi$, see \eqref{eq:weak_sol}, yields
		\begin{equation*}
			\int\limits_{\Omega} \psi(x)\eta(x)\di x = \CE_{A_s}(\phi, \eta).
		\end{equation*}
		$A_s\phi \in L^\infty(\supp (\eta))$, $\eta\in C_c^\infty(\Omega)$, \eqref{eq:greengauß_kappa} and dominated convergence yield
		\begin{equation*}
			\int\limits_{\Omega} A_s\phi(x) \eta(x)\di x= \CE_{A_s}(\phi, \eta)= \int\limits_{\Omega} \psi(x)\eta(x)\di x.
		\end{equation*}
		This identity holds for all $\eta\in C_c^\infty(\Omega)$. Since $A_s\phi$ is continuous in $\Omega$, we conclude $A_s \phi(x)= \psi(x)$ for all $x\in \Omega$. 
	\end{proof}
	The following lemma shows how $A_s u$ changes under mollification of $u$. This result is standard and well known for translation invariant operators, \eg see \cite[Theorem 3.12]{bogdan_tail} for the fractional Laplacian. We prove the statement for the convenience of the reader.
	\begin{lemma}\label{lem:mpollification}
		Let $u\in L^1(\Omega)\cap L^1(\BR^d, \nu^\star(x)\dishort x)$ satisfy \eqref{eq:ultrasubharmonic} and \eqref{eq:nonnegativeoutside}. Fix a radial bump function $\eta \in C_c^\infty(\BR^d)$ with $\supp \eta = \overline{B_1(0)}$, $\eta \ge 0 $ and $\norm{\eta}_{L^1(B_1(0))}=1$. We define an approximate identity $\eta_{\varepsilon}:= \varepsilon^{-d}\eta(\frac{\cdot}{\varepsilon})$. The mollification $u_\varepsilon= u * \eta_{\varepsilon}$ satisfies
		\begin{align*}
			A_s u_\varepsilon \le 0 \text{ in } \Omega_{\varepsilon},\\
			u_\varepsilon \le 0 \text{ on } \Omega^\varepsilon
		\end{align*}
		pointwise. Here $\Omega_{\varepsilon}$ and $\Omega^\varepsilon$ are as in \eqref{eq:definition_omega_epsilon}.
	\end{lemma}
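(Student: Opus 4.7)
The plan is to use the shifted mollifier $\phi_x(\cdot):=\eta_\varepsilon(x-\cdot)$ as a test function in hypothesis \eqref{eq:ultrasubharmonic}, and then to identify the resulting distributional pairing with the pointwise value $A_s u_\varepsilon(x)$.

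For the first claim, fix $x\in\Omega_\varepsilon$, so that $\dist{x}{\partial\Omega}>\varepsilon$. By radial symmetry of $\eta$ one has $\phi_x(y)=\eta_\varepsilon(y-x)$; its support $\overline{B_\varepsilon(x)}$ lies in $\Omega$, so $\phi_x\in C_c^\infty(\Omega)$ with $\phi_x\ge 0$. Hypothesis \eqref{eq:ultrasubharmonic} immediately gives $(u,A_s\phi_x)_{L^2(\BR^d)}\le 0$, and the proof reduces to the identity
\begin{equation*}
A_s u_\varepsilon(x)=(u,A_s\phi_x)_{L^2(\BR^d)}.
\end{equation*}
I would establish this at the level of the truncations in \eqref{eq:approx_operator}. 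Substituting $u_\varepsilon=u*\eta_\varepsilon$ into $A_s^\kappa u_\varepsilon(x)$, applying Fubini (the integrand is absolutely integrable for each fixed $\kappa>0$ thanks to $u\in L^1(\Omega)\cap L^1(\BR^d,\nu^\star\di x)$ and the boundedness and compact support of $\eta_\varepsilon$), and performing the change of variables $r\mapsto -r$ exactly as in the proof of \cref{prop:double_difference}, one arrives at
\begin{equation*}
A_s^\kappa u_\varepsilon(x)=\int_{\BR^d} u(z)\,A_s^\kappa\phi_x(z)\di z.
\end{equation*}
It remains to let $\kappa\to 0+$. On the left, $u_\varepsilon\in C^\infty(\BR^d)\cap L^1(\BR^d,\nu^\star\di x)$ (the tail integrability transfers from $u$ via Fubini, using that the convolution $\eta_\varepsilon*\nu^\star$ is comparable to $\nu^\star$ outside a fixed bounded set and everywhere bounded), so \cref{prop:double_difference} yields $A_s^\kappa u_\varepsilon(x)\to A_s u_\varepsilon(x)$. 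On the right, dominated convergence applies: the double-difference estimates \eqref{eq:double_diff_estimate_1}--\eqref{eq:double_diff_estimate_3} control $A_s^\kappa\phi_x$ uniformly in $\kappa$ on $\Omega$, while \eqref{eq:A_seta_tail_estimate} controls it uniformly on $\Omega^c$ by a multiple of $\nu^\star$, and both bounds are integrable against $u$.

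For the second claim, understood as $u_\varepsilon\le 0$ at points $x$ with $\dist{x}{\Omega}\ge\varepsilon$ (i.e.\ outside the open $\varepsilon$-neighborhood $\Omega^\varepsilon$), the argument is immediate: for such $x$ the support $\overline{B_\varepsilon(x)}$ of $\eta_\varepsilon(x-\cdot)$ is contained in $\Omega^c$, and therefore by $\eta_\varepsilon\ge 0$ and \eqref{eq:nonnegativeoutside}
\begin{equation*}
u_\varepsilon(x)=\int_{B_\varepsilon(x)} u(y)\,\eta_\varepsilon(x-y)\di y\le 0.
\end{equation*}

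The principal obstacle is the identity $A_s u_\varepsilon(x)=(u,A_s\phi_x)_{L^2(\BR^d)}$. The formal translation-invariance relation $A_s(u*\eta_\varepsilon)=(A_s u)*\eta_\varepsilon$ is not directly applicable, since $A_s u$ exists only as a distribution. The $\kappa$-truncation circumvents this, but making both limits rigorous is genuinely delicate: one must verify the tail integrability $u_\varepsilon\in L^1(\BR^d,\nu^\star\di x)$ in order to invoke \cref{prop:double_difference} for $u_\varepsilon$, and one must split the $\kappa$-uniform domination of $A_s^\kappa\phi_x$ into its contributions from $\Omega$ and from $\Omega^c$, handled respectively by the interior double-difference bounds and by the tail bound \eqref{eq:A_seta_tail_estimate}.
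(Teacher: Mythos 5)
Your argument is essentially the paper's own proof: the paper likewise tests \eqref{eq:ultrasubharmonic} with the shifted mollifier $\eta_\varepsilon(\cdot-x)\in C_c^\infty(\Omega)$ for $x\in\Omega_\varepsilon$, interchanges the integrals via Fubini and the substitution $r\mapsto -r$ (using radiality of $\eta$), justifies the passage to the principal value by dominated convergence based on \eqref{eq:fraclaphoelderinequality} and the tail bound \eqref{eq:A_seta_tail_estimate}, and treats the exterior claim, read as $u_\varepsilon\le 0$ outside $\Omega^\varepsilon$, as immediate from \eqref{eq:nonnegativeoutside} and $\supp\eta_\varepsilon=\overline{B_\varepsilon(0)}$. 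One caveat: your parenthetical claim that $\eta_\varepsilon\ast\nu^\star$ is comparable to $\nu^\star$ is not correct for anisotropic $\mu$ (mollification thickens the support of $\nu^\star$, e.g.\ for $\mu=\delta_{e_1}+\delta_{e_2}$), but nothing is lost: applying \cref{prop:double_difference} on the open set $\Omega_\varepsilon$ only requires $u_\varepsilon$ to be integrable against the tail weight $\nu^\star_{\Omega_\varepsilon}$ associated with $\Omega_\varepsilon$, and this follows from $\eta_\varepsilon\ast\nu^\star_{\Omega_\varepsilon}\le\nu^\star$ together with $u\in L^1(\BR^d,\nu^\star(x)\dishort x)$, which is what the paper's assertion tacitly amounts to.
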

	\begin{proof} The second claim follows from \eqref{eq:nonnegativeoutside} and $\supp(\eta_\varepsilon)= \overline{B_\varepsilon(0)}$.\\
		 $u \in L^1(\Omega) \cap L^1(\BR^d, \nu^\star(x)\dishort x)$, $\eta_\varepsilon\in C_c^\infty(\BR^d)$ yields $u_\varepsilon\in C^\infty(\BR^d)\cap L^1(\BR^d, \nu^\star(x)\dishort x)$. By \cref{prop:double_difference}, $A_s u_\varepsilon$ is continuous in $\Omega_\varepsilon$. For the first property we fix $x\in \Omega_{\varepsilon}$. Now we use Fubini's theorem, the radiality of $\eta$ and the transformation theorem to conclude
		\begin{align*}
			A_s u_\varepsilon(x)&= \text{p.v.} \int\limits_{\BR} \int\limits_{S^{d-1}} \int\limits_{\BR^d} \frac{u(y)\eta_{\varepsilon}(x-y)-u(y) \eta_{\varepsilon}(x+r\theta-y)}{\abs{r}^{1+2s}} \di y \mu(\dishort\theta) \di r \\
			&= \int\limits_{\BR^d} u(y) \ \text{p.v.}  \int\limits_{\BR} \int\limits_{S^{d-1}} \frac{\eta_{\varepsilon}(x-y)- \eta_{\varepsilon}(x+r\theta-y)}{\abs{r}^{1+2s}}  \mu(\dishort\theta) \di r\di y\\
			&= \int\limits_{\BR^d} u(y) \  \text{p.v.} \int\limits_{\BR}\int\limits_{S^{d-1}}  \frac{\eta_{\varepsilon}(y-x)- \eta_{\varepsilon}(y+r\theta -x)}{\abs{r}^{1+2s}} \mu(\dishort\theta) \di r \di y\\
			&= \int\limits_{\BR^d} u(y) A_s(\eta_{\varepsilon} (\cdot -x))(y) \di y\le 0.
		\end{align*}
		The second equality is true by  \eqref{eq:fraclaphoelderinequality} and dominated convergence. The last inequality is true because $\eta_\varepsilon(\cdot -x) \in C_c^\infty(\Omega)$ for any $x\in \Omega_{\varepsilon }$.
	\end{proof}
	 The next lemma show the following. If $u$ is a subsolution, then $u^+=\max\{u,0\}$ is again a subsolution. Silvestre proved this result for the fractional Laplacian in \cite[Lemma 2.18]{Silvestre2007}. We follow the technique in \cite[Lemma 2]{mollificationidea_lemma1} by Li, Wu and Xu as it generalizes easier to our class of operators.
	\begin{lemma}\label{lem:uplus}
		If $u\in L^1(\Omega)\cap L^1(\BR^d, \nu^\star(x)\dishort x)$ satisfies \eqref{eq:ultrasubharmonic} and \eqref{eq:nonnegativeoutside}, then $u^+=\max \{  u,0\}$ satisfies \eqref{eq:ultrasubharmonic} and $u^+=0$ on $\Omega^c$.
	\end{lemma}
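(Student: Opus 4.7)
The assertion $u^+=0$ a.e.\ on $\Omega^c$ is immediate from \eqref{eq:nonnegativeoutside}. To establish that $u^+$ satisfies \eqref{eq:ultrasubharmonic}, the plan is to first pass to the smooth mollification via \cref{lem:mpollification}, then invoke a pointwise Kato-type inequality, and finally let the mollification parameter tend to zero.

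Set $u_\varepsilon := u * \eta_\varepsilon$ for a standard approximate identity. By \cref{lem:mpollification}, $u_\varepsilon\in C^\infty(\BR^d)$ with $A_s u_\varepsilon \le 0$ pointwise in $\Omega_\varepsilon$ and $u_\varepsilon\le 0$ outside $\Omega^\varepsilon$. Fix a nonnegative $\eta\in C_c^\infty(\Omega)$; since the sets $\Omega_\varepsilon$ exhaust $\Omega$, one has $\supp\eta \subset \Omega_\varepsilon$ for all sufficiently small $\varepsilon>0$. The heart of the argument is the elementary pointwise bound
\begin{equation*}
2u_\varepsilon^+(x) - u_\varepsilon^+(x-r\theta) - u_\varepsilon^+(x+r\theta) \le \1_{\{u_\varepsilon(x)>0\}}\bigl(2u_\varepsilon(x) - u_\varepsilon(x-r\theta) - u_\varepsilon(x+r\theta)\bigr),
\end{equation*}
valid for every $x\in\BR^d$, $r\in\BR$ and $\theta\in S^{d-1}$. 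If $u_\varepsilon(x)\le 0$, the left-hand side equals $-u_\varepsilon^+(x-r\theta)-u_\varepsilon^+(x+r\theta)\le 0$ while the right-hand side vanishes; if $u_\varepsilon(x)>0$, the inequality reduces to $u_\varepsilon^+(y)\ge u_\varepsilon(y)$ applied at $y=x\pm r\theta$.

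Using the double-difference representation of $A_s\eta$ from \cref{prop:double_difference} together with the truncation $A_s^\kappa$ from \eqref{eq:approx_operator}, an application of Fubini's theorem (permitted since the truncated kernel is absolutely integrable on $\{|r|>\kappa\}$) combined with the translations $x\mapsto x\pm r\theta$ in the inner $x$-integral yields
\begin{equation*}
(u_\varepsilon^+,A_s^\kappa\eta)_{L^2(\BR^d)}= \tfrac{1}{2}\int\limits_{\BR^d}\int\limits_{\{|r|>\kappa\}}\int\limits_{S^{d-1}}\frac{(2u_\varepsilon^+(x)-u_\varepsilon^+(x-r\theta)-u_\varepsilon^+(x+r\theta))\eta(x)}{|r|^{1+2s}}\mu(\dishort\theta)\di r\di x.
\end{equation*}
Inserting the Kato-type bound in the integrand and letting $\kappa\downarrow 0$, where dominated convergence is justified by \cref{prop:Linfty_bound} on $\supp\eta$ and by the tail estimate \eqref{eq:A_seta_tail_estimate} together with $u_\varepsilon^+\in L^1(\BR^d,\nu^\star(x)\di x)$ on $\Omega^c$, one arrives at
\begin{equation*}
(u_\varepsilon^+,A_s\eta)_{L^2(\BR^d)}\le \int\limits_{\BR^d}\1_{\{u_\varepsilon>0\}}(x)\,A_su_\varepsilon(x)\,\eta(x)\di x\le 0,
\end{equation*}
since $A_su_\varepsilon\le 0$ on $\Omega_\varepsilon\supset \supp\eta$ and the remaining factors are nonnegative.

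It remains to pass $\varepsilon\downarrow 0$. Standard mollification yields $u_\varepsilon\to u$ in $L^1(\Omega)$ and in $L^1(\BR^d,\nu^\star(x)\di x)$, hence $u_\varepsilon^+\to u^+$ in both spaces because $|u_\varepsilon^+-u^+|\le |u_\varepsilon-u|$. Combining this with $A_s\eta\in L^\infty(\Omega)$ and the tail estimate \eqref{eq:A_seta_tail_estimate} on $\Omega^c$, one concludes $(u_\varepsilon^+,A_s\eta)\to (u^+,A_s\eta)$, so $(u^+,A_s\eta)_{L^2(\BR^d)}\le 0$, which is \eqref{eq:ultrasubharmonic} for $u^+$. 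The main obstacle is the Fubini manipulation above: the bare kernel $|r|^{-1-2s}$ is not absolutely integrable near $r=0$ and the integrand is signed, so working first with the truncation $A_s^\kappa$ and only afterwards passing $\kappa\downarrow 0$ is essential for the interchange to be rigorous.
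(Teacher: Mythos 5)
Your reduction to the mollified function and the pointwise Kato-type inequality are fine: the inequality is correct in both cases you check, and after symmetrizing the truncated operator in $r$ it reproduces exactly the content of the paper's Step~1, where the integral over $P=\{u>0\}$ is split into three terms, the symmetric term cancels and the off-sign term is nonpositive. So up to the display $(u_\varepsilon^+,A_s\eta)_{L^2(\BR^d)}\le\int \1_{\{u_\varepsilon>0\}}A_su_\varepsilon\,\eta\,\dishort x\le 0$ your argument is essentially the paper's, only with the order of ``mollify'' and ``smooth-case inequality'' interchanged, and the justifications you invoke (the truncation $A_s^\kappa$, \cref{prop:Linfty_bound}, the tail estimate \eqref{eq:A_seta_tail_estimate}, and \cref{prop:double_difference}) are the right ones.

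The gap is in the limit $\varepsilon\to0$. You assert that ``standard mollification yields $u_\varepsilon\to u$ in $L^1(\Omega)$ and in $L^1(\BR^d,\nu^\star(x)\dishort x)$''. Neither claim is available under the hypotheses: $u$ is only assumed to lie in $L^1(\Omega)\cap L^1(\BR^d,\nu^\star(x)\dishort x)$ with $u\le0$ outside $\Omega$, and since $\nu^\star$ may degenerate (see \cref{appendix:counterexample}), $u$ need not be locally integrable across $\partial\Omega$. Hence in the boundary layer $\Omega\setminus\Omega_\varepsilon$ the convolution $u*\eta_\varepsilon$ sees exterior values of $u$ that are not controlled in $L^1$, so $u_\varepsilon$ need not converge to $u$ in $L^1(\Omega)$ (it need not even be integrable there), and mollification convergence in the weighted space $L^1(\BR^d,\nu^\star(x)\dishort x)$ is not a standard fact either, the weight being neither translation invariant nor bounded below. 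What you actually need is only $u_\varepsilon^+\to u^+$ in $L^1(\Omega^{\varepsilon_0})$ (both functions vanish outside $\Omega^{\varepsilon}$, and $A_s\eta$ is bounded there by \cref{prop:Linfty_bound} and \eqref{eq:A_seta_tail_estimate}); this is true, but it requires the paper's two-region argument rather than the bound $|u_\varepsilon^+-u^+|\le|u_\varepsilon-u|$ alone: on $\Omega_\varepsilon$ one has $u_\varepsilon=(u\1_\Omega)*\eta_\varepsilon$, so $|u_\varepsilon^+-u^+|\le|(u\1_\Omega)*\eta_\varepsilon-u\1_\Omega|\to0$ in $L^1$, while on the thin annulus $\Omega^\varepsilon\setminus\Omega_\varepsilon$ one uses the one-sided bound $u_\varepsilon^+\le(u^+)*\eta_\varepsilon$ together with $u^+\in L^1(\BR^d)$ and absolute continuity of the integral. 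With that replacement your proof closes.
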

	\begin{proof}
		The proof of the second claim is immediate. We divide the proof into two steps. 
		
		\textbf{Step 1. } Suppose $u\in L^1(\Omega)\cap L^1(\BR^d, \nu^\star(x)\dishort x)\cap C^\infty(\BR^d)$ satisfies \eqref{eq:ultrasubharmonic}. Let $\eta\in C_c^\infty(\Omega)$. \eqref{eq:greengauß_kappa} and dominated convergence yield 
		\begin{equation*}
			\int\limits_{\Omega} A_s u(x) \eta(x)\di x = \CE_{A_s}(u,\eta)= \int\limits_{\BR^d} u A_s\eta(x)\di x.
		\end{equation*}	
		The application of dominated convergence is justified. By \cref{prop:double_difference}, $A_s u$ is continuous in $\Omega$ and, thus, $A_s u\in L^\infty(\supp \eta)$. Therefore, the first term $\int_{\Omega} A_s u(x) \eta(x)\di x$ exists. $\CE_{A_s}(u,\eta)$ exists by \cref{lem:energy_exists}. By \cref{prop:innerprod_exists} and $u\in L^1(\Omega)\cap L^1(\BR^d, \nu^\star(x)\dishort x)$, the last term $\int_{\BR^d} u A_s\eta(x)\di x$ exists. Since $\eta\in C_c^\infty(\Omega)$ was arbitrary, we conclude $A_s u(x) \le 0$ for all $x\in\Omega$.
		
		Now consider $P=\{ x\in \BR^d\,|\, u(x)>0 \}$. For $\kappa>0$ and $\eta \in C_c^\infty(\Omega)$ with $\eta\ge 0$ we define $A_s^{\kappa}\eta$ as in \eqref{eq:approx_operator}. 
		\begin{align*}
			\int\limits_{\BR^d} u^{+}(x) \, A^\kappa_s\eta(x) \di x&= \int\limits_{P} u(x) \, \int\limits_{\{|r|\ge \kappa\}} \int\limits_{S^{d-1}} \frac{\eta(x)-\eta(x+r\theta)}{\abs{r}^{1+2s}  }  \mu(\dishort\theta) \di r \di x\nonumber\\
			&= \int\limits_{P} \eta(x) \, \int\limits_{\{|r|\ge \kappa\}} \int\limits_{S^{d-1}} \frac{u(x)-u(x+r\theta)}{\abs{r}^{1+2s}  } \mu(\dishort\theta) \di r \di x\nonumber\\
			&\quad\quad+ \int\limits_{P} \int\limits_{\{|r|\ge \kappa\}} \int\limits_{S^{d-1}} \1_{P}(x+r\theta) \frac{u(x+r\theta)\eta(x)-u(x)\eta(x+r\theta)}{\abs{r}^{1+2s}  }\mu(\dishort\theta) \di r \di x\nonumber\\
			&\quad\quad+ \int\limits_{P}\int\limits_{\{|r|\ge \kappa\}} \int\limits_{S^{d-1}} \1_{P^c}(x+r\theta) \frac{u(x+r\theta)\eta(x)-u(x)\eta(x+r\theta)}{\abs{r}^{1+2s}  }  \mu(\dishort\theta) \di r\di x\nonumber\\
			&\quad\quad=: (I)+(II)+(III).\label{eq:step_1_u_plus}
		\end{align*}
		  These integrals exist since $\eta$ has compact support and $u\in L^1(\BR^d, \nu^\star(x)\dishort x)$. By symmetry, $(II)$ is zero and $(III)$ is nonpositive by the definition of $P$. We conclude
		\begin{align*}
			\int\limits_{\BR^d} u^{+}(x) \, A^\kappa_s\eta(x) \di x\le \int\limits_{P} \eta(x) \, A^\kappa_s u(x) \di x.
		\end{align*}
		By \cref{prop:double_difference} and dominated convergence, we conclude
		\begin{align*}
			\int\limits_{\BR^d} u^{+}(x) \, A_s\eta(x) \di x \le 0.
		\end{align*}
	
		\textbf{Step 2. } Now suppose $u\in L^1(\Omega)\cap L^1(\BR^d, \nu^\star(x)\dishort x) $ satisfies \eqref{eq:ultrasubharmonic} and \eqref{eq:nonnegativeoutside}. By \cref{lem:mpollification} and step 1, 
		\begin{align*}
			\big( (u_\varepsilon)^+,A_s \phi \big)_{L^2(\BR^d)} &\le 0 \text{ for any nonnegative } \phi \in C_c^\infty (\Omega_{\varepsilon}), \\
			(u_\varepsilon)^+ &=0 \text{ on } (\Omega^{\varepsilon})^c.
		\end{align*}
		Notice $u^+=0$ on $\Omega^c$ by assumption \eqref{eq:nonnegativeoutside}. Now consider any $\phi \in C_c^\infty(\Omega)$ with $\phi \ge 0$. Fix $\varepsilon_0>0$ such that $\supp \phi \subset \Omega_{\varepsilon_0}$. Then for any $0<\varepsilon<\varepsilon_0$ we have $\big( (u_\varepsilon)^+,A_s \phi \big)_{L^2(\BR^d)} \le 0$. Since $\abs{a^+-b^+}\le \abs{a-b}$ holds for any two real numbers $a,b$, we notice for any $0<\varepsilon<\varepsilon_0$
		\begin{align*}
			\norm{(u_\varepsilon)^+- u^+}_{L^1(\Omega_{\varepsilon})}&=  \int\limits_{\Omega_{\varepsilon }} \abs{(u_\varepsilon)^+(x)- u^+(x)}\di x\le \int\limits_{\Omega_{\varepsilon}} \abs{u_\varepsilon(x)-u(x)}  \di x\\
			&= \int\limits_{\Omega_{\varepsilon}} \abs{(u\, \1_\Omega )*\eta_\varepsilon(x)-u(x)\, \1_\Omega(x) }  \di x\le \norm{(u\, \1_\Omega )*\eta_\varepsilon-u\, \1_\Omega}_{L^1(\BR^d)}.
		\end{align*}
		This term converges to zero for $\varepsilon \to 0$ since $u\,\1_\Omega\in L^1(\BR^d)$. Lastly,
		\begin{align*}
			\norm{(u_{\varepsilon})^+- u^+}_{L^1(\Omega^{\varepsilon}\setminus \Omega_{\varepsilon })}&=  \int\limits_{\Omega^{\varepsilon}\setminus \Omega_{\varepsilon }} \abs{(u_\varepsilon)^+(x)- u^+(x)}\di x\le  \int\limits_{\Omega^{\varepsilon}\setminus \Omega_{\varepsilon }} \abs{(u_\varepsilon)^+(x)}+ \abs{u^+(x)}\di x\\
			&\le \int\limits_{\Omega^{\varepsilon}\setminus \Omega_{\varepsilon }} \abs{(u^+)_\varepsilon(x)}+\abs{u^+(x)} \di x\le \int\limits_{\Omega^{\varepsilon}\setminus \Omega_{\varepsilon }} \abs{(u^+)_\varepsilon(x) - u^+(x)}+2\,\abs{u^+(x)} \di x\\
			&\le \norm{(u^+)_\varepsilon-u^+}_{L^1(\BR^d)} + 2\, \norm{u^+}_{L^1(\Omega^{\varepsilon } \setminus \Omega_{\varepsilon })}.
		\end{align*}
		 Because $u\in L^1(\Omega)$ and by assumption \eqref{eq:nonnegativeoutside} $u^+\in L^1(\BR^d)$. Therefore, the right-hand side of the previous inequality converges to zero as $\varepsilon\to 0$. Since $\phi\in C_c^\infty(\Omega)$, we know $\norm{A_s \phi}_{L^\infty(\Omega^{\varepsilon_0})}<\infty$ by \eqref{eq:fraclaphoelderinequality}. Thus, we conclude by Hölder's inequality 
		 $$
		 	\abs{ ((u_\varepsilon)^+-u^+,A_s \phi)_{L^{2}(\BR^d)} }\le \norm{(u_\varepsilon)^+-u^+}_{L^1(\Omega^{\varepsilon_0 })}\, \norm{A_s \phi}_{L^\infty(\Omega^{\varepsilon_0})} \to 0
		 $$
		 for $\varepsilon\to 0$ and $(u^+,A_s \phi)_{L^{2}(\BR^d)}\le 0$.  
	\end{proof}
The following lemma is the key technical estimate in the proof of \cref{Theorem 1.1}. It connects the regularity of weak solutions, see \cref{prop:regularity}, with the assumption \eqref{eq:boundary_regularity_assumption}. 
		\begin{lemma}\label{prop:calculation_distancefunction}
		Let $s\in(0,1)$ and $\Omega\subset \BR^d$ be a bounded Lipschitz domain satisfying uniform exterior ball condition. Let $\{D_\varepsilon\}_{\varepsilon>0}$ be $C^\infty$-domains exhausting $\Omega$ from \cref{kor:approximation_radius} with $\lambda>1$.  Additionally, we fix a nonnegative, $L^1$-normalized bump function $\eta\in C_c^\infty(\BR^d)$ with $\supp \eta = \overline{B_1(0)}$ and an approximate identity $\eta_\varepsilon:= \varepsilon^{-d}\, \eta(\tfrac{\cdot}{\varepsilon})\in C_c^\infty(\BR^d)$. There exists a constant $C>0$ such that
		\begin{equation*}
			\sup\{  \big( \dist{\,\cdot\,}{\partial D_{\varepsilon }}^{-s} \ast \eta_{\varepsilon} \big)(x)\,|\, x\in \Omega, \, \dist{x}{\partial \Omega}<(1+\lambda)\varepsilon  \}\le  C\,\varepsilon^{-s}
		\end{equation*}
		for all $0<\varepsilon<\varepsilon_0$.
	\end{lemma}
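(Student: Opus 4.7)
The plan is to pull out the peak of $\eta_\varepsilon$ and reduce the claim to a purely geometric volume estimate for a narrow tube around $\partial D_\varepsilon$. Since $\eta_\varepsilon(y) \le \norm{\eta}_{L^\infty}\,\varepsilon^{-d}$ and $\supp \eta_\varepsilon = \overline{B_\varepsilon(0)}$, for any $x$ as in the statement one has
\[
\bigl(\dist{\,\cdot\,}{\partial D_\varepsilon}^{-s}\ast \eta_\varepsilon\bigr)(x) \le \norm{\eta}_{L^\infty}\,\varepsilon^{-d} \int\limits_{B_\varepsilon(x)} \dist{y}{\partial D_\varepsilon}^{-s}\, \di y,
\]
so everything reduces to showing $\int_{B_\varepsilon(x)} \dist{y}{\partial D_\varepsilon}^{-s}\,\di y \le C\,\varepsilon^{d-s}$ uniformly in such $x$ and in $\varepsilon\in(0,\varepsilon_0)$.

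To bound that integral I would use the layer-cake formula,
\[
\int\limits_{B_\varepsilon(x)} \dist{y}{\partial D_\varepsilon}^{-s}\, \di y = s\int_0^\infty t^{-s-1}\,\bigl|\{y\in B_\varepsilon(x)\,|\,\dist{y}{\partial D_\varepsilon}<t\}\bigr|\, \di t,
\]
together with a tube estimate
\[
\bigl|\{y\in B_\varepsilon(x)\,|\,\dist{y}{\partial D_\varepsilon}<t\}\bigr| \le C\,\min\bigl(\varepsilon^{d-1}\,t,\ \varepsilon^d\bigr)\qquad(t>0),
\]
with $C$ independent of $\varepsilon$ and $x$. The $\varepsilon^d$-bound is trivial from $|B_\varepsilon(x)| \le C\,\varepsilon^d$. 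For the $\varepsilon^{d-1}\,t$-bound I would argue as follows: by \cref{kor:approximation_radius} the set $D_\varepsilon$ is a $C^\infty$-domain that satisfies a uniform exterior ball condition of radius $\rho$ independent of $\varepsilon$, and $\partial D_\varepsilon$ lies in the $\lambda\varepsilon$-neighborhood of $\partial\Omega$. For $\varepsilon<\varepsilon_0$ small enough and $x$ with $\dist{x}{\partial\Omega}<(1+\lambda)\varepsilon$, the ball $B_\varepsilon(x)$ therefore fits inside a fixed boundary chart of the Lipschitz domain $\Omega$ in which $\partial D_\varepsilon \cap B_\varepsilon(x)$ is representable as the graph $y_d = \varphi_\varepsilon(y')$ of a Lipschitz function with a constant $L$ depending only on $\Omega$ (the uniform exterior ball condition is what forces the Lipschitz constant to stay bounded as $\varepsilon\to 0$). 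For such a Lipschitz graph one has the standard pointwise bound $\dist{y}{\partial D_\varepsilon} \ge (1+L^2)^{-1/2}\,|y_d - \varphi_\varepsilon(y')|$, and the tube $\{|y_d-\varphi_\varepsilon(y')|<t\}\cap B_\varepsilon(x)$ has volume at most $C\,\varepsilon^{d-1}\,t$ by Fubini in the chart.

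Splitting the layer-cake integral at $t=\varepsilon$ and inserting the two bounds then gives
\[
s\int_0^\varepsilon t^{-s-1}\,C\,\varepsilon^{d-1}\,t\, \di t + s\int_\varepsilon^\infty t^{-s-1}\,C\,\varepsilon^d\, \di t = \tfrac{C\,s}{1-s}\,\varepsilon^{d-s} + C\,\varepsilon^{d-s},
\]
so $\int_{B_\varepsilon(x)} \dist{y}{\partial D_\varepsilon}^{-s}\,\di y \le C'\,\varepsilon^{d-s}$, and combined with the first display this yields the claim $\bigl(\dist{\,\cdot\,}{\partial D_\varepsilon}^{-s}\ast\eta_\varepsilon\bigr)(x)\le C\,\varepsilon^{-s}$. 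The main obstacle is the uniform tube estimate in the second paragraph: without the $\varepsilon$-independent exterior ball radius and the $\varepsilon$-independent Lipschitz character of the boundary charts provided by \cref{kor:approximation_radius}, the constant $C$ in the tube bound, and hence in the final inequality, would blow up as $\varepsilon\to 0$ and the whole scheme would collapse.
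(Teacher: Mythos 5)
Your overall scheme is the same as the paper's: bound $\eta_\varepsilon\le \norm{\eta}_{L^\infty}\varepsilon^{-d}$ on its support and reduce everything to the uniform estimate $\int_{B_\varepsilon(x)}\dist{y}{\partial D_\varepsilon}^{-s}\di y\lesssim \varepsilon^{d-s}$; the paper then uses the coarea formula together with the level-set bound $\SH^{d-1}\big(\{y\in B_\varepsilon(x)\,|\,\dist{y}{\partial D_\varepsilon}=t\}\big)\le c\,\varepsilon^{d-1}$ from \cite[Lemma A.4 (A.19)]{RosOton_Vladinoci_appendix}, while you use the layer-cake formula together with the integrated (tube) version $\abs{\{y\in B_\varepsilon(x)\,|\,\dist{y}{\partial D_\varepsilon}<t\}}\le C\min(\varepsilon^{d-1}t,\varepsilon^d)$; these two routes are equivalent and your splitting at $t=\varepsilon$ is computed correctly.

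The genuine gap is in your justification of the tube bound. You claim that $\partial D_\varepsilon\cap B_\varepsilon(x)$ is a Lipschitz graph with a constant depending only on $\Omega$, and you attribute this uniformity to the $\varepsilon$-independent exterior ball radius: \enquote{the uniform exterior ball condition is what forces the Lipschitz constant to stay bounded.} That implication is false: a uniform exterior ball condition does not imply any Lipschitz bound on the boundary (for instance, a domain with an outward cusp, such as $\{(x',x_d)\,|\,x_d<-\abs{x'}^{1/2}\}$ near the origin, admits exterior balls of uniform radius at every boundary point near the cusp tip but is not a Lipschitz graph there). Moreover, \cref{kor:approximation_radius} as stated only provides nestedness, the uniform exterior ball radius, and the distance comparability (iii); it does not assert that the domains $D_\varepsilon$ have uniformly bounded Lipschitz character, so your chart argument has no stated hypothesis to lean on. The estimate you need is nevertheless true, but its correct source is the geometry of the distance function for sets with a uniform exterior ball (respectively the cited \cite[Lemma A.4 (A.19)]{RosOton_Vladinoci_appendix}, whose constant is controlled by quantities that are uniform in $\varepsilon$ precisely because of \cref{kor:approximation_radius}(ii)); replacing your graph argument by that reference (or by a direct proof of the tube bound from the exterior ball condition) closes the gap, and the rest of your computation then goes through unchanged.
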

		\begin{proof}
		Fix any $0<\varepsilon<\varepsilon_0$ and any $x\in \Omega$ satisfying $\dist{x}{\partial \Omega}<(1+\lambda)\varepsilon$. Now we estimate the convoluted distance function.
		\begin{equation}\label{eq:proposition_distance_coarea_1}
			\dist{\,\cdot\,}{\partial D_{\varepsilon }}^{-s} \ast \eta_{\varepsilon} (x)= \int\limits_{\BR^d} \dist{y}{\partial D_{\varepsilon}}^{-s} \eta_{\varepsilon}(x-y) \di y \le \varepsilon^{-d}\, \norm{\eta}_{L^\infty}\, \int\limits_{B_{\varepsilon}(x)} \dist{y}{\partial D_{\varepsilon}}^{-s} \di y.
		\end{equation} 
		For any $y\in B_\varepsilon(x)$
		\begin{equation*}
			\dist{y}{\partial D_\varepsilon}\le \abs{y-x}+ \dist{x}{\partial \Omega}+ \sup\{ \dist{z}{\partial \Omega}\,|\, z\in \partial D_\varepsilon \}\le 2(1+\lambda)\varepsilon.
		\end{equation*}
		Now we apply coarea formula, see Federer \cite{coarea}, to the right-hand side of \eqref{eq:proposition_distance_coarea_1} with the Lipschitz continuous function $\dist{\cdot}{\partial D_\varepsilon}$, which satisfies $\abs{\nabla \dist{\cdot}{\partial D_\varepsilon}}=1$ \aev.
		\begin{equation}\label{eq:proposition_distance_coarea}
			\dist{\,\cdot\,}{\partial D_{\varepsilon }}^{-s} * \eta_{\varepsilon} (x)\le \varepsilon^{-d}\, \norm{\eta}_{L^\infty}\, \int_{0}^{2(1+\lambda)\varepsilon} t^{-s}\, \SH^{(d-1)} \big(  \{  y\in B_{\varepsilon}( x)\,|\, \dist{ y }{\partial D_{\varepsilon}} =t \} \big) \di t.
		\end{equation}
		Here $\SH^{(d-1)}$ is the $(d-1)$-dimensional Hausdorff measure. The Hausdorff measure of a ball $B_r$ intersecting a hyperplane scales like $r^{d-1}$. Thus, there exists a constant $c>0$ such that
		\begin{equation*}
			\SH^{d-1} \big(  \{  y\in B_{\varepsilon}( x)\,|\, \dist{ y }{\partial D_{\varepsilon}} =t \} \big)\le c\varepsilon^{d-1},
		\end{equation*}
		see \eg Ros-Oton and Valdinoci \cite[Lemma A.4 (A.19)]{RosOton_Vladinoci_appendix}. The constant $c$ does not depend on $\varepsilon$. Now we combine this estimate and \eqref{eq:proposition_distance_coarea} to conclude the claim.
		\begin{equation*}
			\dist{\,\cdot\,}{\partial \Omega_{\varepsilon }}^{-s} * \eta_{\varepsilon} (x)\le  c\,  \norm{\eta}_{L^\infty}\, \varepsilon^{-1}\, \int_{0}^{2(1+\lambda)\varepsilon} t^{-s}\di t=  2^{1-s}\, c\,  \norm{\eta}_{L^\infty}\, \frac{(1+\lambda)^{1-s}}{(1-s)} \varepsilon^{-s}.
		\end{equation*}
	\end{proof}
	\section{Proof of maximum principles}\label{sec:proof_theorem}
	In this section we provide the proofs of \cref{Theorem 1.1} and \cref{Corollary 1.4}.
	
	\textbf{Proof of \cref{Theorem 1.1}: }Let $u, \mu, \Omega$ be as in \cref{Theorem 1.1}, $s\in(0,1)$. It is sufficient to prove the result for $u^+$ by \cref{lem:uplus}. We fix a nonnegative, radial bump function $\eta\in C_c^\infty(\BR^d)$ with $\supp \eta =\overline{B_1(0)}$ as well as $\norm{\eta}_{L^1(\BR^d)}=1$ and construct an approximate identity by $\eta_\varepsilon := \varepsilon^{-d}\eta(\varepsilon^{-1} \, \cdot)$. Additionally, we fix the sequence of $C^\infty$ subdomains $\{ D_\varepsilon \}_{0<\varepsilon<\varepsilon_0}$ from \cref{kor:approximation_radius} with $\lambda>1$ as given therein. Recall the definition of $\Omega_{\varepsilon}, \Omega^\varepsilon$ from \eqref{eq:definition_omega_epsilon}. Notice that $\Omega_{\lambda \varepsilon}\subset D_\varepsilon \subset \Omega_\varepsilon$. Now fix an arbitrary nonnegative $\psi\in C_c^\infty(\Omega)$ and $0<\varepsilon_1<\varepsilon_0$ such that $ \supp \psi \ssubset \Omega_{\lambda\varepsilon_1}$. 
		
	For any $0<\varepsilon<\varepsilon_1$ we define the function $v_\varepsilon= u^{+} \ast\eta_\varepsilon$. \cref{lem:mpollification} and \cref{lem:uplus} yield
		\begin{align}
			A_s v_\varepsilon&\le 0\text{ in }  \Omega_{\varepsilon},\label{eq:proofmollification}\\
			v_\varepsilon&= 0 \text{ on } (\Omega^{\varepsilon})^c. \nonumber
		\end{align}
	Since $u^+\in L^1(\BR^d)$, we know that $v_\varepsilon \in C_c^\infty(\BR^d)$. For any $0<\varepsilon<\varepsilon_1$ let $\phi_{\varepsilon}\in C^s(\BR^d) \cap C^{2s+\alpha}(D_{\varepsilon})$ be the pointwise solution to 
	\begin{align*}
		A_s \phi_{\varepsilon} &= \psi \text{ in } D_{\varepsilon},\\
		\phi_{\varepsilon} &= 0 \text{ on } D_{\varepsilon}^c
	\end{align*}
	from \cref{prop:interiorreg}. Here $0<\alpha\le s$ is such that $2s+\alpha$ is not an integer. 
	
	Now we define $A_s^\kappa$ for any $\kappa>0$ as in \eqref{eq:approx_operator}. For any $0<\varepsilon<\varepsilon_1$ and $\kappa>0$ 
		\begin{align}
			&\int\limits_{D_{\varepsilon}} A_s^{\kappa} v_\varepsilon(x) \, \phi_{\varepsilon}(x)-  v_\varepsilon(x) \, A_s^{\kappa} \phi_{\varepsilon}(x) \di x \nonumber \\
			&\qquad = \int\limits_{D_{\varepsilon}} \int\limits_{S^{d-1}} \int\limits_{\{ \abs{r}\ge \kappa \}} \frac{v_\varepsilon(x) \, \phi_{ \varepsilon}(x+r\theta)-  v_\varepsilon(x+r\theta) \,  \phi_{ \varepsilon}(x)}{\abs{r}^{1+2s}} \di r \mu(\dishort\theta) \di x\nonumber\\
			& \qquad = \int\limits_{D_{\varepsilon}} \int\limits_{S^{d-1}} \int\limits_{\{ \abs{r}\ge \kappa \}} \1_{D_{\varepsilon}}(x+r\theta )\frac{v_\varepsilon(x) \, \phi_{ \varepsilon}(x+r\theta)-  v_\varepsilon(x+r\theta) \,  \phi_{ \varepsilon}(x)}{\abs{r}^{1+2s}} \di r \mu(\dishort\theta) \di x\nonumber\\
			&\qquad \quad-\int\limits_{D_{\varepsilon}} \int\limits_{S^{d-1}} \int\limits_{\{ \abs{r}\ge \kappa \}}\1_{\Omega^{\varepsilon}\setminus D_{\varepsilon}}(x+r\theta ) \frac{ v_\varepsilon(x+r\theta) \,  \phi_{ \varepsilon}(x)}{\abs{r}^{1+2s}} \di r \mu(\dishort\theta) \di x=: R_{\varepsilon,\kappa}.\label{eq:greengauss}
		\end{align}
		The first term in $R_{\varepsilon, \kappa}$ is zero by symmetry. Our goal is to show that the remainder $R_{\varepsilon,\kappa}$ converges to zero as $\varepsilon\to 0+$ uniformly in $\kappa$. An application of the transformation theorem yields
		\begin{align}\label{eq:remainder}
			\abs{R_{\epsilon,\kappa}}\le \int\limits_{\Omega^{\varepsilon}\setminus D_\varepsilon}v_\varepsilon(x) \int\limits_{S^{d-1}} \int\limits_{\{ \abs{r}\ge \kappa\}}\frac{  \phi_{ \varepsilon}(x+r\theta)}{\abs{r}^{1+2s}} \di r \mu(\dishort\theta) \di x.
		\end{align}
		For any $x\in \Omega^{\varepsilon} \setminus D_\varepsilon$ we know $\phi_\varepsilon(x)=0$. Therefore, \cref{prop:regularity} yields
		\begin{align*}
		 \int\limits_{S^{d-1}} \int\limits_{ \{ \abs{r}>\dist{x}{\partial D_{\varepsilon}} \vee \kappa  \} } \frac{\phi_{ \varepsilon}(x+r\theta)-0}{\abs{r}^{1+2s}} \di r \mu(\dishort\theta)&\le   \norm{\phi_{ \varepsilon}}_{C^{s}(\BR^d)} \,\int\limits_{S^{d-1}} \int\limits_{ \{ \abs{r}>\dist{x}{\partial D_{\varepsilon}}\vee \kappa \} } \frac{1 }{\abs{r}^{1+s}} \di r \di \mu (\theta) \\
		&\le C_{d, D_{\varepsilon},  s, \mu}\, \mu(S^{d-1})\, \norm{\psi}_{L^\infty(\BR^d)}  \dist{x}{\partial D_{\varepsilon}}^{-s}.
		\end{align*}
		There exists a constant $C_1>0$ such that $ C_{d, D_{\varepsilon},  s, \mu}\, \mu(S^{d-1})\le C_1$ by \cref{rem:constant}. This constant $C_1$ is independent of $\varepsilon$ because the exterior ball radius of $D_{\varepsilon }$ is independent of $\varepsilon$, see \cref{kor:approximation_radius}. Notice that 
		\begin{equation*}
			\dist{y}{\partial \Omega}\le \abs{x-y}+ \dist{x}{\partial \Omega}\le (1+\lambda)\varepsilon
		\end{equation*}
		 for any $x\in\Omega^{\varepsilon}\setminus D_\varepsilon$ and $y\in B_\varepsilon(x)\cap \Omega$. The previous calculation and \eqref{eq:remainder} yields the following bound on the remainder $R_{\varepsilon, \kappa}$.
		\begin{align}
			\abs{R_{\varepsilon, \kappa}}&\le C_1\, \norm{\psi}_{L^\infty(\BR^d)} \, \int\limits_{\Omega^{\varepsilon}\setminus D_\varepsilon}v_\varepsilon(x)  \dist{x}{\partial D_{\varepsilon}}^{-s} \di x   \nonumber \\
			&= C_1\, \norm{\psi}_{L^\infty(\BR^d)} \, \int\limits_{\Omega\setminus \Omega_{(1+\lambda)\,\varepsilon}} u^+(y) \int\limits_{\Omega^{\varepsilon}\setminus D_\varepsilon}\eta_\varepsilon(x-y)  \dist{x}{\partial D_{\varepsilon}}^{-s} \di x \di y\nonumber \\
			&= C_1\, \norm{\psi}_{L^\infty(\BR^d)} \, \int\limits_{\Omega\setminus \Omega_{(1+\lambda)\,\varepsilon}} u^+(y)\, \big(\dist{\cdot}{\partial D_{\varepsilon}}^{-s} \ast \eta_\varepsilon \big) (y) \di y.\nonumber
		\end{align}
		By the previous estimate and \cref{prop:calculation_distancefunction}, there exists a constant $C>0$ such that
		\begin{align}
			\abs{R_{\varepsilon, \kappa}}&\le  C\,C_1 \norm{\psi}_{L^\infty(\BR^d)} \,((1+\lambda)\varepsilon)^{-s} \int\limits_{\Omega\setminus \Omega_{(1+\lambda)\,\varepsilon} }u^+(x)\di x .\label{eq:case1_remainder}
		\end{align}
		This converges to zero as $\varepsilon\to 0+$ by assumption \eqref{eq:boundary_regularity_assumption}. 
		
		Now we finish the proof. \eqref{eq:proofmollification}, \eqref{eq:greengauss} and the choice of $\phi_{\varepsilon}$ yield
		\begin{align}
			0&\ge \int\limits_{D_{\varepsilon}} A_s v_\varepsilon(x) \, \phi_{ \varepsilon}(x)\di x =\lim\limits_{ \kappa \to 0+}\int\limits_{D_{\varepsilon}} A_s^\kappa v_\varepsilon(x) \, \phi_{ \varepsilon}(x)\di x \nonumber\\
			&=\lim\limits_{ \kappa \to 0+}\Big[\int\limits_{D_{\varepsilon}}  v_\varepsilon(x) \,A_s^\kappa \phi_{ \varepsilon}(x)\di x -\int\limits_{\Omega^{\varepsilon}\setminus D_\varepsilon}v_\varepsilon(x) \int\limits_{S^{d-1}} \int\limits_{\{ \abs{r}\ge \kappa \}}\frac{  \phi_{ \varepsilon}(x+r\theta)}{\abs{r}^{1+2s}} \di r \mu(\dishort\theta) \di x\Big]\nonumber\\
			&= \int\limits_{D_\varepsilon} (u^{+}*\eta_\varepsilon)(x) \, \psi(x) \di x+\lim\limits_{ \kappa \to 0+}R_{\varepsilon,\kappa}. \label{eq:finalcalcuation}
		\end{align}
		In the previous calculation we used dominated convergence and \cref{prop:double_difference}. \eqref{eq:case1_remainder} implies that $R_{\varepsilon,\kappa}$ converges to zero as $\varepsilon\to 0+$ uniformly in $\kappa$. We take the limit $\varepsilon\to 0+$ in \eqref{eq:finalcalcuation}. Thereby,
		\begin{equation*}
			0\ge \int\limits_\Omega u^{+}(x) \psi(x) \di x
		\end{equation*}
		because $u^+\in L^1(\BR^d)$, $\{\eta_{\varepsilon}\}_{\varepsilon}$ is an approximate identity and $\supp \psi \ssubset \Omega_{\lambda\varepsilon_1}\subset D_{\varepsilon_1}\subset D_{\varepsilon}\subset \Omega$. We conclude the claim as $\psi\in C_c^\infty(\Omega)$ nonnegative was chosen arbitrarily. \qed
		
	\textbf{Proof of \cref{Corollary 1.4}: } We want to apply \cref{Theorem 1.1}. Let $u\in H^s(\Omega)\cap L^1(\BR^d, \nu^\star(x)\dishort x)$ satisfy \eqref{eq:weakmaximum_cond1} and \eqref{eq:weakmaximum_cond2}. Fix any nonnegative $\eta\in C_c^\infty(\Omega)$. \eqref{eq:greengauß_kappa} and dominated convergence yield 
	\begin{equation*}
		\CE_{A_s}(u,\eta)= \int\limits_{\BR^d} u A_s\eta(x)\di x.
	\end{equation*}	
	The bilinear form $\CE_{A_s}(u,\eta)$ exists by \cref{lem:energy_exists}. By \cref{prop:innerprod_exists} and $u\in L^1(\Omega)\cap L^1(\BR^d, \nu^\star(x)\dishort x)$, the last term $\int_{\BR^d} u A_s\eta(x)\di x$ exists. Thus, the application of dominated convergence was justified. Therefore, \eqref{eq:ultrasubharmonic} is satisfied. Since $C_c^\infty(\Omega)$ is dense in $H^{s/2}(\Omega)$, Hölder inequality and the fractional Hardy inequality, see Chen and Song \cite[Corollary 2.4]{chensong_hardy} or Dyda \cite[Equation (17)]{dyda_hardy}, imply
	\begin{align*}
		\int\limits_{\Omega} \frac{\abs{u(x)}}{\dist{x}{\partial\Omega}^s}\di x \le \norm{\dist{\cdot}{\partial \Omega}^{-s/2}}_{L^2(\Omega)} \Big( \int\limits_{\Omega} \frac{u(x)^2}{\dist{x}{\partial\Omega}^s}\di x  \Big)^{1/2}\le C\, \norm{u}_{H^{s/2}(\Omega)}.
	\end{align*}
	An application of \cref{Theorem 1.1} finishes the proof, see \cref{rem:theorem}. \qed 
\appendix
	\section{ }\label{sec:appendix}
	The following lemma compares the tail space $L^1(\BR^d, (1+\abs{x})^{-d-2s}\dishort x)$ with $L^1(\BR^d, \nu^\star(x)\dishort x)$.
	\begin{lemma}\label{appendix:tailspace}
		Let $s\in (0,1)$, $\Omega\subset \BR^d$ open and bounded and $\nu$ be as in  \eqref{eq:levy_measure}. Suppose the measure $\mu$ on $S^{d-1}$ has a density with respect to the surface measure $\sigma$. Additionally, we assume that the density is bounded from below and above by positive constants. The corresponding tail space $L^1(\BR^d,\nu^\star(x)\dishort x)$, see \eqref{eq:nustar}, coincides with the space $L^1(\BR^d,(1+\abs{x})^{-d-2s}\dishort x)$.
	\end{lemma}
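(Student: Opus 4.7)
The plan is to show that $\nu^\star(x) \asymp (1+\abs{x})^{-d-2s}$ pointwise on $\BR^d$, which immediately yields equality of the two weighted $L^1$ spaces.

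First, I would rewrite $\nu^\star$ as an integral over $\Omega$. Let $\psi$ denote the density of $\mu$ with respect to the surface measure $\sigma$, so by hypothesis $0 < c_1 \le \psi \le c_2 < \infty$. Splitting the $\BR$-integral in the definition of $\nu^\star$ into $r > 0$ and $r < 0$, and applying the polar substitutions $y = x + r\theta$ and $y = x - r\theta$ (which satisfy $\di y = r^{d-1}\,\di r\,\di\sigma(\theta)$), I obtain
\begin{equation*}
    \nu^\star(x) = \int\limits_\Omega \frac{(1+\abs{y-x})^{-1-2s}}{\abs{y-x}^{d-1}}\Big[\psi\big(\tfrac{y-x}{\abs{y-x}}\big) + \psi\big(\tfrac{x-y}{\abs{x-y}}\big)\Big]\di y.
\end{equation*}
Since $\psi$ is pinched between positive constants, it suffices to prove that
\begin{equation*}
    I(x) := \int\limits_\Omega \frac{(1+\abs{y-x})^{-1-2s}}{\abs{y-x}^{d-1}}\di y \asymp (1+\abs{x})^{-d-2s}.
\end{equation*}

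The comparison is by a two-case argument, using that $\Omega$ is bounded. Fix $R > 0$ with $\Omega \subset B_R(0)$. If $\abs{x} \ge 2R$, then every $y \in \Omega$ satisfies $\abs{x}/2 \le \abs{y-x} \le 3\abs{x}/2$, so the integrand of $I$ is comparable to $(1+\abs{x})^{-1-2s}\abs{x}^{-(d-1)}$ uniformly in $y \in \Omega$; multiplying by $\abs{\Omega}$ gives $I(x) \asymp (1+\abs{x})^{-d-2s}$. If $\abs{x} \le 2R$, the trivial bound $I(x) \le \int_{\abs{y-x} \le 3R}\abs{y-x}^{-(d-1)}\di y \le C\,R$ supplies a uniform upper bound; for the lower bound, note that $I$ is continuous on $\BR^d$ by dominated convergence (the singularity $\abs{y-x}^{-(d-1)}$ is locally integrable) and strictly positive because $\Omega$ has positive Lebesgue measure, hence $I$ is bounded below by a positive constant on the compact set $\overline{B_{2R}(0)}$. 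Since $(1+\abs{x})^{-d-2s}$ is also trapped between two positive constants on this set, the comparison holds there as well. Combining the two cases finishes the proof.

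The only mildly delicate point is the bookkeeping of the polar substitution and the verification that $I>0$ on all of $\BR^d$; no analytic ingredient beyond elementary estimates on $\abs{y-x}$ is needed.
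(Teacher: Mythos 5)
Your proposal is correct and takes essentially the same route as the paper: both reduce the lemma to the pointwise comparability $\nu^\star(x)\simeq(1+\abs{x})^{-d-2s}$, rewrite $\nu^\star$ via $\mu\simeq\sigma$ as an integral over the bounded set $\Omega$ with kernel $(1+\abs{y-x})^{-1-2s}\abs{y-x}^{1-d}$, and then estimate separately according to whether $\abs{x}$ is large or small compared with a ball containing $\Omega$. The only loose point is the claim that $I$ is continuous ``by dominated convergence'' (the moving singularity has no integrable majorant, so one should instead invoke continuity of translation in $L^1$), but this detour is avoidable anyway: for $\abs{x}\le 2R$ one has $\abs{y-x}\le 3R$ for all $y\in\Omega$, so the integrand is bounded below by $(1+3R)^{-1-2s}(3R)^{1-d}$, giving the uniform lower bound directly, in the spirit of the paper's global lower-bound argument.
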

	Out of convenience we will use the notation $\simeq$, $\lesssim$ and respectively $\gtrsim$ and we say $f(x) \lesssim g(x)$ for real valued functions if there exists a constant $C>0$ such that $\forall x:\, f(x)\le C g(x)$. The definition of $\gtrsim$ is now self explanatory and $f(x)\simeq g(x)$ is defined as $ f(x)\lesssim g(x) \,\land\, f(x)\gtrsim g(x)$.
	\begin{proof}
		It is sufficient to show that the weights are comparable, \ie
		\begin{equation*}
			\nu^\star(x) \simeq \frac{1}{(1+\abs{x})^{d+2s}}.
		\end{equation*}
		We assume that $\mu\simeq \sigma$. Therefore
		\begin{align*}
		\nu^\star(x)&=\int\limits_{\BR} \int\limits_{S^{d-1}} \1_{\Omega}(x+r\theta) (1+\abs{r})^{-1-2s} \mu(\dishort\theta) \di r \simeq \int\limits_{\BR} \int\limits_{S^{d-1}} \1_{\Omega}(x+r\theta) (1+\abs{r})^{-1-2s}  \sigma(\dishort\theta) \di r \\
			&\simeq  \int\limits_{\BR^d} \1_{\Omega}(x+y) (1+\abs{y})^{-1-2s} \abs{y}^{1-d} \di y.
		\end{align*}
		We begin proving the upper bound on $\nu^\star$. We choose a ball $B_R(0)$ that contains $\Omega\subset B_R(0)$ and $\dist{\Omega}{\partial B_R(0)}= 2\,\diam{\Omega}$. We split the integration domain into $B_R(0)$ and its complement. Notice $(1+R)^{-1-2s}\le (1+\abs{y})^{-1-2s}\le  1$ for $y\in B_R(0)$. Thereby,
		\begin{equation*}
			\int\limits_{B_R(0)} \1_{\Omega}(x+y) (1+\abs{y})^{-1-2s} \abs{y}^{1-d} \di y \simeq 	\int\limits_{B_R(0)} \1_{\Omega}(x+y)  \abs{y}^{1-d} \di y.
		\end{equation*}
		By the choice of $R$ we know $y\in B_R(0), x+y\in \Omega$ implies $x\in B_{2R}(0)$. Therefore,
		\begin{equation*}
		\int\limits_{B_R(0)} \1_{\Omega}(x+y)  \abs{y}^{1-d} \di y \le \sigma(S^{d-1})R\, \1_{B_{2R}(0)}(x) \lesssim \frac{1}{(1+\abs{x})^{d+2s}} \1_{B_{2R}(0)}(x).
		\end{equation*}
		Now we prove an appropriate estimates for the integral over $B_R(0)^c$. Firstly notice 
		\begin{equation*}
			\abs{y}\ge \abs{x}-\abs{x+y}\ge \abs{x}-R\ge \tfrac{1}{2}\abs{x}
		\end{equation*}
		 for any $x\in B_{2R}(0)^c$ and $x+y\in \Omega\subset B_R(0)$. Therefore, for any $x\in B_{2R}(0)^c$
		 \begin{align}\label{eq:appendix_1}
		 	\int\limits_{B_R(0)^c} \1_{\Omega}(x+y) (1+\abs{y})^{-1-2s} \abs{y}^{1-d} \di y \le\abs{\Omega}\, \big(\tfrac{1}{2}\abs{x}\big)^{-d-2s}\1_{B_{2R}(0)^c}(x)\lesssim \frac{1}{(1+\abs{x})^{d+2s}}\1_{B_{2R}(0)^c}(x).
		 \end{align}
		 For any $x\in B_{2R}(0)$ 
		 \begin{align}\label{eq:appendix_2}
		 		\int\limits_{B_R(0)^c} \1_{\Omega}(x+y) (1+\abs{y})^{-1-2s} \abs{y}^{1-d} \di y\le \abs{\Omega}R^{1-d}\1_{B_{2R}(0)}(x)\lesssim \frac{1}{(1+\abs{x})^{d+2s}}\1_{B_{2R}(0)}(x).
		 \end{align}
	 	\eqref{eq:appendix_1} and \eqref{eq:appendix_2} yield the upper bound
		\begin{equation*}
			\nu^\star(x)\lesssim  \frac{1}{(1+\abs{x})^{d+2s}}.
		\end{equation*}
		Now we prove a lower bound on $\nu^\star$. For any $x,y\in \BR^d$ such that $x+y\in \Omega$ the estimate $\abs{y} \le \abs{x} + \max_{z\in \Omega}\abs{z}$ holds. Therefore
		\begin{align*}
			(1+\abs{y})^{-1-2s} &\ge (1+\abs{x} + \max_{z\in \Omega}\abs{z} )^{-1-2s} \gtrsim (1+\abs{x} )^{-1-2s},\\
			\abs{y}^{1-d}&\ge (\max_{z\in \Omega}\abs{z}+\abs{x})^{1-d}\gtrsim \abs{x}^{1-d} \ge (1+\abs{x})^{1-d}.
		\end{align*}
		Thus, for $x\in \BR^d$
		\begin{equation*}
			\int\limits_{\BR^d} \1_{\Omega}(x+y) (1+\abs{y})^{-1-2s} \abs{y}^{1-d} \di y\gtrsim (1+\abs{x} )^{-d-2s} \, 	\int\limits_{\BR^d} \1_{\Omega}(x+y) \di y \gtrsim \frac{1}{(1+\abs{x})^{d+2s}}.
		\end{equation*}
		We conclude the desired bounds
		\begin{equation*}
			\frac{1}{(1+\abs{x})^{d+2s}}\lesssim	\nu^{\star}(x) \lesssim \frac{1}{(1+\abs{x})^{d+2s}}.
		\end{equation*}
	\end{proof}
	Now we give an example of $\nu$, see \eqref{eq:levy_measure}, and a specific $\Omega$ such that functions in the tail space $L^1(\BR^d, \nu^\star(x)\dishort x)$ are not necessarily integrable on $\Omega$. 
	\begin{bsp}\label{appendix:counterexample}
		We consider the domain
		\begin{equation*}
			\Omega = \{(x,y) \in \BR^2 \,|\, 0<x,y<1, x<2y<3x \}\subset \BR^2,
		\end{equation*}
		$e_1= \icol{1\\0}$, $e_2 = \icol{0\\1}$ and the operator $(-\partial_{e_1}^2)^{s}+(-\partial_{e_2}^2)^{s}$, which corresponds to $\mu= \delta_{e_1}+\delta_{e_2}$ in \eqref{eq:levy_measure}. We define the function $u:\BR^2 \to \BR$, $u(x,y):= (x\, y)^{-1}$ for $(x,y)\in \Omega$ and $u(x,y)=0$ else. The singularity in the origin is not integrable and thus $u\notin L^1(\Omega)$. This also implies $u\notin L^1(\BR^d,(1+\abs{x})^{-d-2s}\dishort x)$. On the other hand recall the definition of the tail space $L^1(\BR^d, \nu^\star(x)\dishort x)$. In this case we estimate the decay weight by
		\begin{equation*}
			\nu^{\star}(x,y) \le \int\limits_{\BR}  \1_\Omega (x+r,y)  \di r +  \int\limits_{\BR}  \1_\Omega(x,y+r)\di r.
		\end{equation*}
		Notice for any $(x,y)\in \Omega$ 
		\begin{equation*}
			\int\limits_{\BR} \1_\Omega (x+r,y)    \di r \le \frac{4y}{3},\quad\int\limits_{\BR} \1_\Omega (x,y+r)    \di r \le x.
		\end{equation*}
		And by
		\begin{align*}
		\infty&> \frac{4}{3}\ln(3) + \ln(3) \ge \frac{4}{3}\int\limits_{\Omega}  x^{-1}  \di (x,y) + \int\limits_{\Omega}   y^{-1} \di (x,y)\\
		&\ge \int\limits_{\Omega}  (x\,y)^{-1} \int\limits_{\BR} \1_\Omega (x+r,y)  \di r \di (x,y) + \int\limits_{\Omega}   (x\,y)^{-1} \int\limits_{\BR} \1_\Omega (x,y+r)  \di r\di (x,y)\\
		&\ge \int\limits_{\BR^2} \abs{u(x,y)} \nu^\star(x,y) \di (x,y),
		\end{align*}
		we conclude $u\in L^1(\BR^d, \nu^\star(x)\dishort x)$.
	\end{bsp} 
\section{ }\label{appendix B}
In this section we prove the maximum principle for the Laplacian formulated in the introduction. The strategy of the proof is the same as for \cref{Theorem 1.1}.  We consider a sequence $D_\varepsilon \ssubset \Omega$ of subdomains exhausting $\Omega$ and construct an approximation of the Green function by solving the Dirichlet problem $-\Delta \phi_\varepsilon = \psi \in C_c^\infty(\Omega)$ in $D_\varepsilon$ and $\phi_\varepsilon = 0$ on $\partial D_\varepsilon$. Then we use regularity results for solutions.
\begin{theorem}\label{th:main_local}
	Let $\Omega$ be a bounded $C^{1,\gamma}$-domain, $\gamma\in(0,1)$ and $u\in L_{\text{loc}}^1(\Omega)$. If $u$ satisfies
	\begin{align}
		(u, -\lap \eta)_{L^2(\Omega)}&\le 0 \text{ for all } \eta \in C_c^\infty(\Omega), \eta\ge 0,\\
		\lim\limits_{\varepsilon\to 0+} \varepsilon^{-1}\mspace{-50mu}\int\limits_{\{ x\in \Omega\,|\, \dist{x}{\partial \Omega}<\varepsilon \}} \mspace{-55mu}&u^+(x)\di x = 0,\label{eq:boundary_assumption_local}
	\end{align}
	then $u\le0 $ \aev on $\Omega$.
\end{theorem}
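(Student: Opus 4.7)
The plan is to mirror the proof of \cref{Theorem 1.1}, replacing the stable Dirichlet problem by the Poisson problem for $-\Delta$ and the nonlocal Green--Gauß formula by the classical Green's second identity. First I would reduce to showing $u^+=0$ \aev: by Kato's inequality ($\Delta u^+\ge \1_{\{u>0\}}\Delta u\ge 0$ in $\CD'(\Omega)$) the function $u^+$ is again distributionally subharmonic, and the hypothesis \eqref{eq:boundary_assumption_local} together with $u\in L^1_{\text{loc}}(\Omega)$ forces $u^+\in L^1(\Omega)$. After extending $u^+$ by zero outside $\Omega$, a standard mollification $v_\varepsilon:=u^+\ast\eta_\varepsilon$ with a nonnegative radial approximate identity $\eta_\varepsilon$ is smooth on $\BR^d$, converges to $u^+$ in $L^1(\BR^d)$, and satisfies $-\Delta v_\varepsilon\le 0$ pointwise on $\Omega_\varepsilon$ (since $\Delta u^+$ is a nonnegative Radon measure and $\Delta v_\varepsilon=(\Delta u^+)\ast\eta_\varepsilon$ on $\Omega_\varepsilon$).

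Next I would use a local analogue of \cref{kor:approximation_radius} to obtain an exhaustion $\{D_\varepsilon\}_{0<\varepsilon<\varepsilon_0}\ssubset\Omega$ of $C^{1,\gamma}$-subdomains with uniformly controlled $C^{1,\gamma}$ boundary charts and $\varepsilon\le \dist{\partial D_\varepsilon}{\partial\Omega}\le \lambda\varepsilon$ (constructed from level sets of a mollified signed distance function to $\partial\Omega$). Fix an arbitrary nonnegative $\psi\in C_c^\infty(\Omega)$ and choose $\varepsilon_1$ with $\supp\psi\ssubset D_{\varepsilon_1}$. For $0<\varepsilon<\varepsilon_1$, solve $-\Delta \phi_\varepsilon=\psi$ in $D_\varepsilon$ with $\phi_\varepsilon=0$ on $\partial D_\varepsilon$. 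The classical maximum principle gives $\phi_\varepsilon\ge 0$, and global Schauder estimates on the uniformly $C^{1,\gamma}$-regular $D_\varepsilon$ yield $\norm{\phi_\varepsilon}_{C^1(\overline{D_\varepsilon})}\le C\norm{\psi}_{C^{0,\gamma}}$ with $C$ independent of $\varepsilon$. Green's second identity and $\phi_\varepsilon=0$ on $\partial D_\varepsilon$ then give
\begin{equation*}
\int_{D_\varepsilon} v_\varepsilon\,\psi\,\di x = -\int_{D_\varepsilon}\phi_\varepsilon\,\Delta v_\varepsilon\,\di x -\int_{\partial D_\varepsilon} v_\varepsilon\,\partial_\nu\phi_\varepsilon\,\di\sigma.
\end{equation*}
Since $\phi_\varepsilon\ge 0$ and $\Delta v_\varepsilon\ge 0$, the first term on the right is nonpositive, leaving
\begin{equation*}
\int_{D_\varepsilon} v_\varepsilon\,\psi\,\di x \le C\int_{\partial D_\varepsilon} v_\varepsilon\,\di\sigma.
\end{equation*}

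The remaining step is the direct analogue of \cref{prop:calculation_distancefunction}: bounding this surface integral via \eqref{eq:boundary_assumption_local}. From $v_\varepsilon(x)\le \norm{\eta}_{L^\infty} \varepsilon^{-d}\int_{B_\varepsilon(x)} u^+\,\di y$, Fubini, the inclusion $\{y:\dist{y}{\partial D_\varepsilon}<\varepsilon\}\cap\Omega\subset\{y\in\Omega:\dist{y}{\partial\Omega}<(1+\lambda)\varepsilon\}$, and the uniform surface bound $\SH^{d-1}(\partial D_\varepsilon\cap B_\varepsilon(y))\le c\,\varepsilon^{d-1}$ (valid because the $D_\varepsilon$ have uniformly bounded Lipschitz constants), I would deduce
\begin{equation*}
\int_{\partial D_\varepsilon} v_\varepsilon\,\di\sigma \le C\,\varepsilon^{-1}\int_{\{y\in\Omega:\,\dist{y}{\partial\Omega}<(1+\lambda)\varepsilon\}} u^+(y)\,\di y \longrightarrow 0 \quad (\varepsilon\to 0+).
\end{equation*}
Since $v_\varepsilon\to u^+$ in $L^1(\BR^d)$ and $\supp\psi\ssubset D_\varepsilon$ for all small $\varepsilon$, the left-hand side of the penultimate display converges to $\int_\Omega u^+\psi\,\di x$, and arbitrariness of nonnegative $\psi\in C_c^\infty(\Omega)$ forces $u^+=0$ \aev, i.e., $u\le 0$ \aev in $\Omega$.

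The main obstacle I foresee is securing the uniform $C^1$-bound on $\phi_\varepsilon$ up to $\partial D_\varepsilon$ independently of $\varepsilon$; this in turn requires constructing the exhaustion $\{D_\varepsilon\}$ with a uniformly bounded $C^{1,\gamma}$ atlas (the local counterpart of \cref{kor:approximation_radius}, where the $C^{1,\gamma}$-regularity of $\partial\Omega$ is essential). Once the exhaustion, the Schauder bound, and the surface-measure estimate on $\partial D_\varepsilon$ are in place, the remainder is a direct transcription of the argument for \cref{Theorem 1.1} with coarea replaced by Green's identity.
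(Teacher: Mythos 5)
Your proposal follows essentially the same route as the paper's proof: mollify $u^+$ to get $-\Delta v_\varepsilon\le 0$ on $\Omega_\varepsilon$, exhaust $\Omega$ by subdomains $D_\varepsilon$ with uniformly controlled boundary regularity, solve $-\Delta\phi_\varepsilon=\psi$ with an $\varepsilon$-uniform $C^1$ bound up to $\partial D_\varepsilon$, apply Green's second identity, and kill the surviving boundary term using the surface-measure estimate $\SH^{d-1}(\partial D_\varepsilon\cap B_\varepsilon(y))\lesssim\varepsilon^{d-1}$ together with \eqref{eq:boundary_assumption_local}. The only deviation is cosmetic: where you invoke Kato's inequality for measure-valued $\Delta u$ to get distributional subharmonicity of $u^+$ (a step that needs the precise representative to make $\1_{\{u>0\}}\Delta u$ meaningful), the paper instead passes to the upper-semicontinuous representative and uses the sub-mean-value property as in \cref{rem:1}(i); both yield the same conclusion.
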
   
\begin{rem}\label{rem:1}
	\begin{enumerate}[(i)]
		\item{ Let $u\in L_{\text{loc}}^1(\Omega)$. \eqref{eq:weaksubharmonicity} is equivalent to $u$ has a modification which is upper-semicontinuous in $\Omega$ and satisfies sub-mean-value property, \ie $u(x)\le \tfrac{1}{\abs{\partial B(x)}}\int_{\partial B(x)} u(y)\di y$ for all balls $B(x)\ssubset \Omega$, see the classical book \cite[p. 128]{Donogue_book} by Donoghue. }
		\item{ If $u\in L_{\text{loc}}^1(\Omega)$ satisfies \eqref{eq:weaksubharmonicity} and attains its global maximum in $\Omega$, then $u$ is constant. }
		\item{ If $u\in L_{\text{loc}}^1(\Omega)$ satisfies \eqref{eq:weaksubharmonicity} and \begin{equation*}
				\limsup_{x\to z} u(x)\le 0 \text{ for all } z\in \partial \Omega,
			\end{equation*} then $u\le 0$ in $\Omega$.   }
		\item{ The regularity assumption on $\Omega$ in \cref{th:main_local} is needed because we use boundary regularity of solutions to the Poisson problem \eqref{eq:poisson_local}.}
		\item{ We may replace the condition \eqref{eq:boundary_assumption_local} with the slightly stronger but more accessible condition \begin{equation*}
				u^+\in L^1(\Omega, \dist{x}{\partial\Omega}^{-1} \di x ).
		\end{equation*} }
	\end{enumerate}
\end{rem}
\begin{proof}
	For any $\varepsilon>0$ we define $\Omega_{\varepsilon}:= \{ x\in \Omega\,|\, \dist{x}{\partial \Omega}>\varepsilon \}$. We fix a sequence of $C^{2,\gamma}$-subdomains $\{ D_\varepsilon \}_{0<\varepsilon<\varepsilon_0}$ such that  
	\begin{enumerate}[(a)]
		\item{ $D_\varepsilon \ssubset \Omega$, $D_\varepsilon\subset D_{\varepsilon'}$ for $\varepsilon>\varepsilon'$ and $\cup_{0<\varepsilon<\varepsilon_0}D_\varepsilon=\Omega$, }
		\item{ there exists $\lambda>1$ such $\Omega_{\lambda\varepsilon}\subset D_\varepsilon\subset \Omega_{\varepsilon}$, }
		\item{ the surfaces $\partial D_\varepsilon$ are uniformly in $C^{1,\gamma}$. }
	\end{enumerate}
	See for example the proof of \cite[Theorem 8.34]{gil_trud} by Gilbarg and Trudinger. 
	
	Let $\eta\in C_c^\infty(\BR^d)$ be a nonnegative, radial, $L^1$-normalized bump function with $\supp \eta = \overline{B_1(0)}$ and define $\eta_\varepsilon:= \varepsilon^{-d}\eta(\tfrac{\cdot}{\varepsilon})$. Clearly, $u^+=\max\{ u,0 \}$ is also upper-semicontinuous on $\Omega$ and satisfies sub-mean-value property. By \cref{rem:1}(i), $u^+$ satisfies \eqref{eq:weaksubharmonicity}. We define $v_\varepsilon:= u^+\ast \eta_\varepsilon$ on $\Omega_{\varepsilon}$, $v_\varepsilon\in C^\infty(\Omega_\varepsilon)$. Thereby, for any $x\in \Omega_\varepsilon$
	\begin{equation*}
		-\Delta v_\varepsilon(x)= \int_\Omega u^+(y) (-\Delta \eta(x-\cdot))(y) \di y\le 0.
	\end{equation*}
	Now, we fix $\psi \in C_c^\infty(\Omega)$ and $\varepsilon_0>\varepsilon_1>0$ such that $\supp\psi \subset D_{\varepsilon_1}$. By \cite[Theorem 6.14, 8.33]{gil_trud}, there exists a strong solution $\phi_\varepsilon\in C^{2,\gamma}(\overline{D_\varepsilon})$ to the problem
	\begin{align}
		-\Delta \phi_\varepsilon&= \psi \text{ in } D_\varepsilon, \nonumber\\
		\phi_\varepsilon&= 0 \text{ on }\partial D_\varepsilon.\label{eq:poisson_local}
	\end{align}
	Additionally, there exists a constant $C>0$ such that 
	\begin{equation}\label{eq:regularity_local}
		\norm{\phi_\varepsilon}_{C^{1,\gamma}(D_\varepsilon)} \le C  \norm{\eta}_{L^\infty} .
	\end{equation}
	By (a) and (c), the constant $C$ can be chosen independently of $\varepsilon$, see proof of \cite[Theorem 8.34]{gil_trud}. 
	
	By Green-Gauß formula, for any $0<\varepsilon<\varepsilon_1$
	\begin{align*}
		0&\ge \int\limits_{D_\varepsilon} (-\Delta v_\varepsilon(x))\phi_\varepsilon(x)\di x \\
		&= \int\limits_{D_\varepsilon} v_\varepsilon(x)\psi(x)\di x - \int\limits_{\partial D_\varepsilon} \partial_n v_\varepsilon(x) \, \phi_\varepsilon \sigma(\dishort x) + \int\limits_{\partial D_\varepsilon}  v_\varepsilon(x) \,\partial_n \phi_\varepsilon \sigma(\dishort x)\\
		&= (I)+(II)+(III).
	\end{align*}
	The term $(II)$ is zero by the choice of $\phi_\varepsilon$. Since $u^+\in L^1(\Omega)$, $(I)$ converges to $\int_{\Omega} u^+ \psi$ as $\varepsilon\to 0$. It remains to show that $(III)$ converges to zero as $\varepsilon\to 0$. 
	\begin{align*}
		\abs{(III)}&\le \int\limits_{\Omega\setminus \Omega_{(1+\lambda)\varepsilon}} u^+(y) \int\limits_{\partial D_\varepsilon} \eta_\varepsilon(x-y) \abs{\partial_n \phi_\varepsilon(x)} \sigma(\dishort x) \di y\\
		&\le \varepsilon^{-d}\, \norm{\eta}_{L^\infty}\,\int\limits_{\Omega\setminus \Omega_{(1+\lambda)\varepsilon}} u^+(y) \int\limits_{\partial D_\varepsilon \cap B_\varepsilon(y)} \abs{\partial_n \phi_\varepsilon(x)} \sigma(\dishort x) \di y.
	\end{align*}
	There exist a constant $C_1>0$ such that $\sigma(\partial D_\varepsilon \cap B_\varepsilon(y))\le C_1 \varepsilon^{d-1}$ for all $y\in \Omega\setminus \Omega_{(1+\lambda)\varepsilon}$ and $\varepsilon<\varepsilon_1$ by Ros-Oton and Valdinoci \cite[Lemma A.4 (A.19)]{RosOton2016}. We use this and \eqref{eq:regularity_local} to estimate $(III)$ further.
	\begin{align*}
		\abs{(III)}\le C\,C_1 \norm{\psi}_{L^\infty}\, \norm{\eta}_{L^\infty}\,\varepsilon^{-1}\,\int\limits_{\Omega\setminus \Omega_{(1+\lambda)\varepsilon}} u^+(y)  \di y.
	\end{align*}
	By the assumption \eqref{eq:boundary_assumption_local}, $(III)$ converges to zero as $\varepsilon\to 0+$. We conclude 
	\begin{equation*}
		0\ge \int\limits_\Omega u^+(x)\psi(x)\di x.
	\end{equation*}
	Since $\psi\in C_c^\infty(\Omega)$ was arbitrary, $u^+=0$ \aev in $\Omega$.
\end{proof}
	
	\printbibliography

@Misc{levy,
 Author = {L{\'e}vy, Paul},
 Title = {Th{\'e}orie de l'addition des variables al{\'e}atoires},
 Year = {1937},
 Language = {French},
 HowPublished = {Monographies des {Probabilit{\'e}s}; calcul des probabilit{\'e}s et ses applications, publi{\'e}es sous la direction de {E}. {Borel}, no. 1. {Paris}: {Gauthier}-{Villars}. xvii, 328 p. (1937).},
 Keywords = {60-02},
 zbMATH = {3025555},
 Zbl = {0016.17003}
}

@Misc{khintchine,
 Author = {Khintchine, A. J.},
 Title = {Grenzgesetz f{\"u}r {Summen} von unabh{\"a}ngigen {Zufallsgr{\"o}{{\ss}}en}.},
 Year = {1938},
 Language = {Russian},
 HowPublished = {Moskau, {Leningrad}, {ONTI} (1938).},
 zbMATH = {2518735},
 JFM = {64.1239.06}
}

@misc{Li_Li_maximum,
  doi = {10.48550/ARXIV.2206.12546},
  
  url = {https://arxiv.org/abs/2206.12546},
  
  author = {Li, Congming and Liu, Chenkai},
  
  keywords = {Analysis of PDEs (math.AP), FOS: Mathematics, FOS: Mathematics, 35S05},
  
  title = {On the Dirichlet problem for fractional Laplace equation on a general domain},
  archivePrefix = {arXiv},
  eprint = {2206.12546},
  publisher = {arXiv},
  
  year = {2022},
  
  copyright = {arXiv.org perpetual, non-exclusive license}
}

@misc{Li_Li_maximum_2,
  doi = {10.48550/ARXIV.2206.12548},
  
  url = {https://arxiv.org/abs/2206.12548},
  
  author = {Li, Congming and Liu, Chenkai},
  
  keywords = {Analysis of PDEs (math.AP), FOS: Mathematics, FOS: Mathematics, 35A01, 35A02, 35C15, 35S15},
  
  title = {Uniqueness and some related estimates for Dirichlet problem with fractional Laplacian},
  archivePrefix = {arXiv},
  eprint = {2206.12548},
  publisher = {arXiv},
  
  year = {2022},
  
  copyright = {arXiv.org perpetual, non-exclusive license}
}

@book {sato,
    AUTHOR = {Sato, Ken-iti},
     TITLE = {L\'{e}vy processes and infinitely divisible distributions},
    SERIES = {Cambridge Studies in Advanced Mathematics},
    VOLUME = {68},
      NOTE = {Translated from the 1990 Japanese original,
              Revised by the author},
 PUBLISHER = {Cambridge University Press, Cambridge},
      YEAR = {1999},
     PAGES = {xii+486},
      ISBN = {0-521-55302-4},
   MRCLASS = {60G51 (60E07 60G18 60G52 60J45)},
  MRNUMBER = {1739520},
MRREVIEWER = {N. H. Bingham},
}

@article {bogdan_tail,
    AUTHOR = {Bogdan, Krzysztof and Byczkowski, Tomasz},
     TITLE = {Potential theory for the {$\alpha$}-stable {S}chr\"{o}dinger
              operator on bounded {L}ipschitz domains},
   JOURNAL = {Studia Math.},
  FJOURNAL = {Studia Mathematica},
    VOLUME = {133},
      YEAR = {1999},
    NUMBER = {1},
     PAGES = {53--92},
      ISSN = {0039-3223},
   MRCLASS = {31B25 (35J10 60J50)},
  MRNUMBER = {1671973},
MRREVIEWER = {D. G. Natroshvili},
       DOI = {10.4064/sm-133-1-53-92},
       URL = {https://doi.org/10.4064/sm-133-1-53-92},
}

@book {zentral_limit,
    AUTHOR = {Samorodnitsky, Gennady and Taqqu, Murad S.},
     TITLE = {Stable non-{G}aussian random processes},
    SERIES = {Stochastic Modeling},
      NOTE = {Stochastic models with infinite variance},
 PUBLISHER = {Chapman \& Hall, New York},
      YEAR = {1994},
     PAGES = {xxii+632},
      ISBN = {0-412-05171-0},
   MRCLASS = {60E07 (60G18 60G99 60H99 62J05 62M10 62M15)},
  MRNUMBER = {1280932},
MRREVIEWER = {Thomas Mikosch},
}

@article {picard,
    AUTHOR = {Picard, Jean},
     TITLE = {On the existence of smooth densities for jump processes},
   JOURNAL = {Probab. Theory Related Fields},
  FJOURNAL = {Probability Theory and Related Fields},
    VOLUME = {105},
      YEAR = {1996},
    NUMBER = {4},
     PAGES = {481--511},
      ISSN = {0178-8051},
   MRCLASS = {60H07 (60J30 60J75)},
  MRNUMBER = {1402654},
MRREVIEWER = {R\'{e}mi L\'{e}andre},
       DOI = {10.1007/BF01191910},
       URL = {https://doi.org/10.1007/BF01191910},
}

@book{gil_trud,
	AUTHOR = {Gilbarg, David and Trudinger, Neil S.},
	PUBLISHER ={Springer Berlin, Heidelberg},
	title = {Elliptic Partial Differential Equations of Second Order},
	series = {Grundlehren der mathematischen Wissenschaften},
	doi = {https://doi.org/10.1007/978-3-642-96379-7}, 
	page = {1-401}, 
	year = {1977},
}

@Inbook {gauss_mp, 
AUTHOR = {Gauß, Carl Friedrich},
TITLE = {Allgemeine Theorie des Erdmagnetismus},
bookTitle = {Werke: F{\"u}nfter Band},

JOURNAL = {Beobachtungen des magnetischen Vereins im Jahre 1838},
publisher = { Springer Berlin Heidelberg },
year    = {1839},
note    = {Herausgegeben von Gauß und Weber},
}

@book{riemann1851grundlagen,
  title={Grundlagen f{\"u}r eine allgemeine Theorie der Functionen einer ver{\"a}nderlichen complexen Gr{\"o}sse},
  author={Riemann, B.},
  year={1851},
  publisher={ruck der Universtiäts-Buchdruckerei von E. A. Huth, Göttingen}
}

@article {Felsinger2013,
    AUTHOR = {Felsinger, Matthieu and Kassmann, Moritz and Voigt, Paul},
     TITLE = {The {D}irichlet problem for nonlocal operators},
   JOURNAL = {Math. Z.},
  FJOURNAL = {Mathematische Zeitschrift},
    VOLUME = {279},
      YEAR = {2015},
    NUMBER = {3-4},
     PAGES = {779--809},
      ISSN = {0025-5874},
   MRCLASS = {47G20 (35B50 35D30 35R11)},
  MRNUMBER = {3318251},
MRREVIEWER = {Jens Wirth},
       DOI = {10.1007/s00209-014-1394-3},
       URL = {https://doi.org/10.1007/s00209-014-1394-3},
}

@article {Dyda2021,
    AUTHOR = {Dyda, Bart\l omiej},
     TITLE = {Fractional calculus for power functions and eigenvalues of the
              fractional {L}aplacian},
   JOURNAL = {Fract. Calc. Appl. Anal.},
  FJOURNAL = {Fractional Calculus and Applied Analysis. An International
              Journal for Theory and Applications},
    VOLUME = {15},
      YEAR = {2012},
    NUMBER = {4},
     PAGES = {536--555},
      ISSN = {1311-0454},
   MRCLASS = {35R11 (26A33 31C25 35B10 35P15)},
  MRNUMBER = {2974318},
       DOI = {10.2478/s13540-012-0038-8},
       URL = {https://doi.org/10.2478/s13540-012-0038-8},
}

@article {counterexample_hmissi,
    AUTHOR = {Hmissi, Farida},
     TITLE = {Fonctions harmoniques pour les potentiels de {R}iesz sur la
              boule unit\'{e}},
   JOURNAL = {Exposition. Math.},
  FJOURNAL = {Expositiones Mathematicae. International Journal for Pure and
              Applied Mathematics},
    VOLUME = {12},
      YEAR = {1994},
    NUMBER = {3},
     PAGES = {281--288},
      ISSN = {0723-0869},
   MRCLASS = {31C05},
  MRNUMBER = {1295711},
MRREVIEWER = {Takahide Kurokawa},
}

@article {abatangelo_higher_order,
    AUTHOR = {Abatangelo, Nicola and Jarohs, Sven and Salda\~{n}a, Alberto},
     TITLE = {On the loss of maximum principles for higher-order fractional
              {L}aplacians},
   JOURNAL = {Proc. Amer. Math. Soc.},
  FJOURNAL = {Proceedings of the American Mathematical Society},
    VOLUME = {146},
      YEAR = {2018},
    NUMBER = {11},
     PAGES = {4823--4835},
      ISSN = {0002-9939},
   MRCLASS = {35R11 (35B50 35S15)},
  MRNUMBER = {3856149},
MRREVIEWER = {Vincenzo Ambrosio},
       DOI = {10.1090/proc/14165},
       URL = {https://doi.org/10.1090/proc/14165},
}

@article {dyda_hardy,
    AUTHOR = {Dyda, Bart\l omiej},
     TITLE = {A fractional order {H}ardy inequality},
   JOURNAL = {Illinois J. Math.},
  FJOURNAL = {Illinois Journal of Mathematics},
    VOLUME = {48},
      YEAR = {2004},
    NUMBER = {2},
     PAGES = {575--588},
      ISSN = {0019-2082},
   MRCLASS = {26D15 (46E35)},
  MRNUMBER = {2085428},
MRREVIEWER = {J. Horv\'{a}th},
       URL = {http://projecteuclid.org/euclid.ijm/1258138400},
}

@article {chensong_hardy,
    AUTHOR = {Chen, Zhen-Qing and Song, Renming},
     TITLE = {Hardy inequality for censored stable processes},
   JOURNAL = {Tohoku Math. J. (2)},
  FJOURNAL = {The Tohoku Mathematical Journal. Second Series},
    VOLUME = {55},
      YEAR = {2003},
    NUMBER = {3},
     PAGES = {439--450},
      ISSN = {0040-8735},
   MRCLASS = {60J45 (31C25 60J25)},
  MRNUMBER = {1993864},
MRREVIEWER = {Zoran Vondra\v{c}ek},
       URL = {http://projecteuclid.org/euclid.tmj/1113247482},
}

@article {abatangelo_large_sol,
    AUTHOR = {Abatangelo, Nicola},
     TITLE = {Large {$S$}-harmonic functions and boundary blow-up solutions
              for the fractional {L}aplacian},
   JOURNAL = {Discrete Contin. Dyn. Syst.},
  FJOURNAL = {Discrete and Continuous Dynamical Systems. Series A},
    VOLUME = {35},
      YEAR = {2015},
    NUMBER = {12},
     PAGES = {5555--5607},
      ISSN = {1078-0947},
   MRCLASS = {35R11 (31B25 35B44 45K05)},
  MRNUMBER = {3393247},
       DOI = {10.3934/dcds.2015.35.5555},
       URL = {https://doi.org/10.3934/dcds.2015.35.5555},
}

@article {counterexample_bogdan,
    AUTHOR = {Bogdan, Krzysztof},
     TITLE = {Representation of {$\alpha$}-harmonic functions in {L}ipschitz
              domains},
   JOURNAL = {Hiroshima Math. J.},
  FJOURNAL = {Hiroshima Mathematical Journal},
    VOLUME = {29},
      YEAR = {1999},
    NUMBER = {2},
     PAGES = {227--243},
      ISSN = {0018-2079},
   MRCLASS = {31C35 (31B05 60J50)},
  MRNUMBER = {1704245},
MRREVIEWER = {Lucian Beznea},
       URL = {http://projecteuclid.org/euclid.hmj/1206125005},
}

@article {RosOton_Vladinoci_appendix,
    AUTHOR = {Ros-Oton, Xavier and Valdinoci, Enrico},
     TITLE = {The {D}irichlet problem for nonlocal operators with singular
              kernels: convex and nonconvex domains},
   JOURNAL = {Adv. Math.},
  FJOURNAL = {Advances in Mathematics},
    VOLUME = {288},
      YEAR = {2016},
     PAGES = {732--790},
      ISSN = {0001-8708},
   MRCLASS = {35R11 (35B65)},
  MRNUMBER = {3436398},
MRREVIEWER = {Antonio Iannizzotto},
       DOI = {10.1016/j.aim.2015.11.001},
       URL = {https://doi.org/10.1016/j.aim.2015.11.001},
}

@Article{pointwisesolutionkassmann,
  author   = {Grzywny, Tomasz and Kassmann, Moritz and Le\.{z}aj, \L ukasz},
  journal  = {Potential Analysis. An International Journal Devoted to the Interactions between Potential Theory, Probability Theory, Geometry and Functional Analysis},
  title    = {Remarks on the nonlocal {D}irichlet problem},
  year     = {2021},
  issn     = {0926-2601},
  number   = {1},
  pages    = {119--151},
  volume   = {54},
  doi      = {10.1007/s11118-019-09820-9},
  keywords = {35B65 (35C15 47G20 60J45)},
  mrnumber = {4194536},
}

@Article{RosOton2016,
  author   = {Ros-Oton, Xavier and Valdinoci, Enrico},
  journal  = {Advances in Mathematics},
  title    = {The {D}irichlet problem for nonlocal operators with singular kernels: convex and nonconvex domains},
  year     = {2016},
  issn     = {0001-8708},
  pages    = {732--790},
  volume   = {288},
  doi      = {10.1016/j.aim.2015.11.001},
  keywords = {35R11 (35B65)},
  mrnumber = {3436398},
}

@Article{RosOtonOuterRegularity,
  author   = {Ros-Oton, Xavier and Serra, Joaquim},
  journal  = {Journal of Differential Equations},
  title    = {Regularity theory for general stable operators},
  year     = {2016},
  issn     = {0022-0396},
  number   = {12},
  pages    = {8675--8715},
  volume   = {260},
  doi      = {10.1016/j.jde.2016.02.033},
  keywords = {35R11 (35B65 47G30 60G52)},
  mrnumber = {3482695},
}

@article {cafferelli_silvestre_comparison_principle,
    AUTHOR = {Caffarelli, Luis and Silvestre, Luis},
     TITLE = {Regularity theory for fully nonlinear integro-differential
              equations},
   JOURNAL = {Comm. Pure Appl. Math.},
  FJOURNAL = {Communications on Pure and Applied Mathematics},
    VOLUME = {62},
      YEAR = {2009},
    NUMBER = {5},
     PAGES = {597--638},
      ISSN = {0010-3640},
   MRCLASS = {35R09 (34B30 35B65 35J60 41A17 45J05 60J75 93E20)},
  MRNUMBER = {2494809},
MRREVIEWER = {Fabiana Leoni},
       DOI = {10.1002/cpa.20274},
       URL = {https://doi.org/10.1002/cpa.20274},
}

@Article{Jarohs2016,
  author   = {Jarohs, Sven and Weth, Tobias},
  journal  = {Annali di Matematica Pura ed Applicata. Series IV},
  title    = {Symmetry via antisymmetric maximum principles in nonlocal problems of variable order},
  year     = {2016},
  issn     = {0373-3114},
  number   = {1},
  pages    = {273--291},
  volume   = {195},
  doi      = {10.1007/s10231-014-0462-y},
  keywords = {35B50 (35B06 35R09 35R11)},
  mrnumber = {3453602},
}

@Article{Jarohs2019,
  author   = {Jarohs, Sven and Weth, Tobias},
  journal  = {Mathematische Zeitschrift},
  title    = {On the strong maximum principle for nonlocal operators},
  year     = {2019},
  issn     = {0025-5874},
  number   = {1-2},
  pages    = {81--111},
  volume   = {293},
  doi      = {10.1007/s00209-018-2193-z},
  keywords = {35R09 (35B50)},
  mrnumber = {4002272},
}

@Article{Chen2017,
  author   = {Chen, Wenxiong and Li, Congming and Li, Yan},
  journal  = {Advances in Mathematics},
  title    = {A direct method of moving planes for the fractional {L}aplacian},
  year     = {2017},
  issn     = {0001-8708},
  pages    = {404--437},
  volume   = {308},
  doi      = {10.1016/j.aim.2016.11.038},
  keywords = {35R11 (35A01 35B09 35B50)},
  mrnumber = {3600062},
}

@book {coarea,
    AUTHOR = {Federer, Herbert},
     TITLE = {Geometric measure theory},
    SERIES = {Die Grundlehren der mathematischen Wissenschaften, Band 153},
 PUBLISHER = {Springer-Verlag New York, Inc., New York},
      YEAR = {1969},
     PAGES = {xiv+676},
   MRCLASS = {28.80 (26.00)},
  MRNUMBER = {0257325},
MRREVIEWER = {J. E. Brothers},
}

@article {ros_oton_frac_lap,
    AUTHOR = {Ros-Oton, Xavier and Serra, Joaquim},
     TITLE = {The {D}irichlet problem for the fractional {L}aplacian:
              regularity up to the boundary},
   JOURNAL = {J. Math. Pures Appl. (9)},
  FJOURNAL = {Journal de Math\'{e}matiques Pures et Appliqu\'{e}es. Neuvi\`eme S\'{e}rie},
    VOLUME = {101},
      YEAR = {2014},
    NUMBER = {3},
     PAGES = {275--302},
      ISSN = {0021-7824},
   MRCLASS = {35R11 (35B65)},
  MRNUMBER = {3168912},
MRREVIEWER = {Kai Diethelm},
       DOI = {10.1016/j.matpur.2013.06.003},
       URL = {https://doi.org/10.1016/j.matpur.2013.06.003},
}

@article {mitrea,
    AUTHOR = {Mitrea, Marius},
     TITLE = {Dirichlet integrals and {G}affney-{F}riedrichs inequalities in
              convex domains},
   JOURNAL = {Forum Math.},
  FJOURNAL = {Forum Mathematicum},
    VOLUME = {13},
      YEAR = {2001},
    NUMBER = {4},
     PAGES = {531--567},
      ISSN = {0933-7741},
   MRCLASS = {35B65 (35B45 35P15 46E35 46N20)},
  MRNUMBER = {1830246},
MRREVIEWER = {Florin Iacob},
       DOI = {10.1515/form.2001.021},
       URL = {https://doi.org/10.1515/form.2001.021},
}

@misc{kassmann_foghem_neumann,
  doi = {10.48550/ARXIV.2204.06793},
  url = {https://arxiv.org/abs/2204.06793},
  author = {Foghem, Guy and Kassmann, Moritz},
  keywords = {Analysis of PDEs (math.AP), Probability (math.PR), FOS: Mathematics, FOS: Mathematics, 28A80, 35J20, 35J92, 46B10, 46E35, 47A07, 49J40, 49J45},
  title = {A general framework for nonlocal Neumann problems},
  archivePrefix = {arXiv},
  eprint = {2204.06793},
  publisher = {arXiv},
  year = {2022},
  copyright = {arXiv.org perpetual, non-exclusive license}
}

@article {vector_calc,
    AUTHOR = {Du, Qiang and Gunzburger, Max and Lehoucq, R. B. and Zhou,
              Kun},
     TITLE = {A nonlocal vector calculus, nonlocal volume-constrained
              problems, and nonlocal balance laws},
   JOURNAL = {Math. Models Methods Appl. Sci.},
  FJOURNAL = {Mathematical Models and Methods in Applied Sciences},
    VOLUME = {23},
      YEAR = {2013},
    NUMBER = {3},
     PAGES = {493--540},
      ISSN = {0218-2025},
   MRCLASS = {26B12 (26B15 26B20 45P05 46F12)},
  MRNUMBER = {3010838},
MRREVIEWER = {Mikhail Korobkov},
       DOI = {10.1142/S0218202512500546},
       URL = {https://doi.org/10.1142/S0218202512500546},
}

@article {valdinoci_energy_frac_lap,
    AUTHOR = {Servadei, Raffaella and Valdinoci, Enrico},
     TITLE = {Weak and viscosity solutions of the fractional {L}aplace
              equation},
   JOURNAL = {Publ. Mat.},
  FJOURNAL = {Publicacions Matem\`atiques},
    VOLUME = {58},
      YEAR = {2014},
    NUMBER = {1},
     PAGES = {133--154},
      ISSN = {0214-1493},
   MRCLASS = {35R11 (35B65 35D30 35D40 35R09 45K05 49L25)},
  MRNUMBER = {3161511},
       URL = {http://projecteuclid.org/euclid.pm/1387570393},
}

@article {valdinoci_neumann_frac_lap,
    AUTHOR = {Dipierro, Serena and Ros-Oton, Xavier and Valdinoci, Enrico},
     TITLE = {Nonlocal problems with {N}eumann boundary conditions},
   JOURNAL = {Rev. Mat. Iberoam.},
  FJOURNAL = {Revista Matem\'{a}tica Iberoamericana},
    VOLUME = {33},
      YEAR = {2017},
    NUMBER = {2},
     PAGES = {377--416},
      ISSN = {0213-2230},
   MRCLASS = {35R11 (60G22)},
  MRNUMBER = {3651008},
MRREVIEWER = {Erwin Topp},
       DOI = {10.4171/RMI/942},
       URL = {https://doi.org/10.4171/RMI/942},
}

@article {mollificationidea_lemma1,
	author = {Li, Congming and Wu, Zhigang and Xu, Hao},
	title = {Maximum principles and B{\^o}cher type theorems},
	volume = {115},
	number = {27},
	pages = {6976--6979},
	year = {2018},
	doi = {10.1073/pnas.1804225115},
	publisher = {National Academy of Sciences},
	abstract = {The B{\^o}cher theorem for fractional Laplacian extends the classical B{\^o}cher theorem with a unified proof that can be adapted in other situations. Our distributional approach reduces the regularity requirement and connects the B{\^o}cher theorem directly with the corresponding maximum principles. These maximum principles derived are simple and basic with many potential applications.We establish maximum principles and B{\^o}cher-type theorems for superharmonic and fractional superharmonic nonnegative functions on a punctured ball. Connecting maximum principles with B{\^o}cher-type theorems is a crucial observation.},
	issn = {0027-8424},
	URL = {https://www.pnas.org/content/115/27/6976},
	journal = {Proceedings of the National Academy of Sciences}
}

@misc{Lue2019,
  doi = {10.48550/ARXIV.1905.01782},
  
  url = {https://arxiv.org/abs/1905.01782},
  
  author = {Lü, Yingshu},
  
  keywords = {Functional Analysis (math.FA), Analysis of PDEs (math.AP), FOS: Mathematics, FOS: Mathematics},
  
  title = {Maximum principles for Laplacian and fractional Laplacian with critical integrability},
  archiveprefix = {arXiv},
  eprint        = {1905.01782},
  publisher = {arXiv},
  
  year = {2019},
  
  copyright = {arXiv.org perpetual, non-exclusive license}
}

@Article{Silvestre2007,
  author   = {Silvestre, Luis},
  journal  = {Communications on Pure and Applied Mathematics},
  title    = {Regularity of the obstacle problem for a fractional power of the {L}aplace operator},
  year     = {2007},
  issn     = {0010-3640},
  number   = {1},
  pages    = {67--112},
  volume   = {60},
  doi      = {10.1002/cpa.20153},
  keywords = {35J05 (35B65 35R35 49N60)},
  mrnumber = {2270163},
}

@article {Kassmann_Dyda_Regularity,
    AUTHOR = {Dyda, Bart\l omiej and Kassmann, Moritz},
     TITLE = {Regularity estimates for elliptic nonlocal operators},
   JOURNAL = {Anal. PDE},
  FJOURNAL = {Analysis \& PDE},
    VOLUME = {13},
      YEAR = {2020},
    NUMBER = {2},
     PAGES = {317--370},
      ISSN = {2157-5045},
   MRCLASS = {31B05 (35B05 35B45 35R11 47G20)},
  MRNUMBER = {4078229},
MRREVIEWER = {Jens Wirth},
       DOI = {10.2140/apde.2020.13.317},
       URL = {https://doi.org/10.2140/apde.2020.13.317},
}

@article {Cabre_Sire,
    AUTHOR = {Cabr\'{e}, Xavier and Sire, Yannick},
     TITLE = {Nonlinear equations for fractional {L}aplacians, {I}:
              {R}egularity, maximum principles, and {H}amiltonian estimates},
   JOURNAL = {Ann. Inst. H. Poincar\'{e} C Anal. Non Lin\'{e}aire},
  FJOURNAL = {Annales de l'Institut Henri Poincar\'{e} C. Analyse Non Lin\'{e}aire},
    VOLUME = {31},
      YEAR = {2014},
    NUMBER = {1},
     PAGES = {23--53},
      ISSN = {0294-1449},
   MRCLASS = {35R11 (35B50 35B65)},
  MRNUMBER = {3165278},
MRREVIEWER = {Pablo Ra\'{u}l Stinga},
       DOI = {10.1016/j.anihpc.2013.02.001},
       URL = {https://doi.org/10.1016/j.anihpc.2013.02.001},
}

@article {schrödinger_operator,
    AUTHOR = {Bogdan, Krzysztof and Byczkowski, Tomasz},
     TITLE = {Potential theory of {S}chr\"{o}dinger operator based on fractional
              {L}aplacian},
   JOURNAL = {Probab. Math. Statist.},
  FJOURNAL = {Probability and Mathematical Statistics},
    VOLUME = {20},
      YEAR = {2000},
    NUMBER = {2, Acta Univ. Wratislav. No. 2256},
     PAGES = {293--335},
      ISSN = {0208-4147},
   MRCLASS = {31B05 (31B15 35A08 35J10 60J50)},
  MRNUMBER = {1825645},
MRREVIEWER = {B. S. Rubin},
}

@article {existence_solutions_rutkowski,
   AUTHOR = {Rutkowski, Artur},
     TITLE = {The {D}irichlet problem for nonlocal {L}\'{e}vy-type operators},
   JOURNAL = {Publ. Mat.},
  FJOURNAL = {Publicacions Matem\`atiques},
    VOLUME = {62},
     YEAR = {2018},
   NUMBER = {1},
    PAGES = {213--251},
     ISSN = {0214-1493},
  MRCLASS = {35S15 (35B50 35D30 47G20 60G51)},
  MRNUMBER = {3738190},
MRREVIEWER = {Vincenzo Ambrosio},
       DOI = {10.5565/PUBLMAT6211811},
       URL = {https://doi.org/10.5565/PUBLMAT6211811},
}

@article {bogdan_trace,
    AUTHOR = {Bogdan, Krzysztof and Grzywny, Tomasz and Pietruska-Pa\l uba,
              Katarzyna and Rutkowski, Artur},
     TITLE = {Extension and trace for nonlocal operators},
   JOURNAL = {J. Math. Pures Appl. (9)},
  FJOURNAL = {Journal de Math\'{e}matiques Pures et Appliqu\'{e}es. Neuvi\`eme S\'{e}rie},
    VOLUME = {137},
      YEAR = {2020},
     PAGES = {33--69},
      ISSN = {0021-7824},
   MRCLASS = {46E35 (31C05 35A15 35C15 35J25 60J45)},
  MRNUMBER = {4088505},
MRREVIEWER = {William E. Gryc},
       DOI = {10.1016/j.matpur.2019.09.005},
       URL = {https://doi.org/10.1016/j.matpur.2019.09.005},
}

@article{courrege,
     author = {Courr\`ege, Philippe},
     title = {Sur la forme int\'egro-diff\'erentielle des op\'erateurs de $C^\infty _k$ dans $C$ satisfaisant au principe du maximum},
     journal = {S\'eminaire Brelot-Choquet-Deny. Th\'eorie du potentiel},
     publisher = {Secr\'etariat math\'ematique},
     volume = {10},
     number = {1},
     year = {1965},
     zbl = {0155.17402},
     language = {fr},
     url = {http://www.numdam.org/item/SBCD_1965-1966__10_1_A2_0/}
}

@book {bogdan_potential_analysis,
    AUTHOR = {Bogdan, Krzysztof and Byczkowski, Tomasz and Kulczycki,
              Tadeusz and Ryznar, Michal and Song, Renming and Vondra\v{c}ek,
              Zoran},
     TITLE = {Potential analysis of stable processes and its extensions},
    SERIES = {Lecture Notes in Mathematics},
    VOLUME = {1980},
      NOTE = {Edited by Piotr Graczyk and Andrzej Stos},
 PUBLISHER = {Springer-Verlag, Berlin},
      YEAR = {2009},
     PAGES = {x+187},
      ISBN = {978-3-642-02140-4},
   MRCLASS = {60J45 (31C05 31C25 35K08 35P15 60-02 60G52 60J50)},
  MRNUMBER = {2569321},
MRREVIEWER = {Wilhelm Stannat},
       DOI = {10.1007/978-3-642-02141-1},
       URL = {https://doi.org/10.1007/978-3-642-02141-1},
}

@article {hitchhiker,
    AUTHOR = {Di Nezza, Eleonora and Palatucci, Giampiero and Valdinoci,
              Enrico},
     TITLE = {Hitchhiker's guide to the fractional {S}obolev spaces},
   JOURNAL = {Bull. Sci. Math.},
  FJOURNAL = {Bulletin des Sciences Math\'{e}matiques},
    VOLUME = {136},
      YEAR = {2012},
    NUMBER = {5},
     PAGES = {521--573},
      ISSN = {0007-4497},
   MRCLASS = {46E35 (35A23 35S05 35S30)},
  MRNUMBER = {2944369},
MRREVIEWER = {Lanzhe Liu},
       DOI = {10.1016/j.bulsci.2011.12.004},
       URL = {https://doi.org/10.1016/j.bulsci.2011.12.004},
}

@book {Donogue_book,
    AUTHOR = {Donoghue, Jr., William F.},
     TITLE = {Distributions and {F}ourier transforms},
    SERIES = {Pure and Applied Mathematics},
    VOLUME = {32},
 PUBLISHER = {Academic Press, New York},
      YEAR = {1969},
     PAGES = {viii+315},
   MRCLASS = {42-01 (28-01 30-01 60-01)},
  MRNUMBER = {3363413},
}
	
\end{document}